\newcommand{\Z}{\mathbb{Z}}
\newcommand{\N}{\mathbb{N}}
\newcommand{\bN}{\mathbb{N}}
\newcommand{\Q}{\mathbb{Q}}
\newcommand{\R}{\mathbb{R}}
\newcommand{\F}{\mathbb{F}}
\newcommand{\Zp}{\mathbb{Z}_p}
\newcommand{\cC}{\mathcal{C}}
\newcommand{\cL}{\mathcal{L}}
\newcommand{\cO}{\mathcal{O}}
\newcommand{\cP}{\mathcal{P}}
\newcommand{\cZ}{\mathcal{Z}}
\newcommand{\Cmodsim}{\cC/\text{\lower.5ex\hbox{$\sim$}}\,\, }
\newcommand{\Cpmodsim}{\cC_p/\text{\lower.5ex\hbox{$\sim$}}\,\, }
\newcommand{\modsim}{\text{\lower.5ex\hbox{$\sim$}}\,\, }
\newcommand{\Lp}{L} % meaning L^+
\newcommand{\cLp}{\cL} % meaning L^+
\newcommand{\wtil}[1]{\widetilde{#1}}
\newcommand{\ol}[1]{\overline{#1}}
\numberwithin{equation}{section}
\DeclareMathOperator{\Gal}{Gal}
\DeclareMathOperator{\Cok}{Cok}
\DeclareMathOperator{\Cl}{Cl}
\DeclareMathOperator{\Lat}{Lat}
\DeclareMathOperator{\pd}{pd}
\DeclareMathOperator{\Hom}{Hom}
\DeclareMathOperator{\Ext}{Ext}
\DeclareMathOperator{\Frac}{Frac}
\DeclareMathOperator{\rank}{rank}
\DeclareMathOperator{\pe}{pe}
\DeclareMathOperator{\adm}{adm}
\DeclareMathOperator{\rea}{real}
\DeclareMathOperator{\ram}{ram}
\theoremstyle{plain}
\newtheorem{thm}{Theorem}[section]
\newtheorem{lem}[thm]{Lemma}
\newtheorem{prop}[thm]{Proposition}
\newtheorem{claim}[thm]{Claim}
\theoremstyle{definition}
\newtheorem{defn}[thm]{Definition}
\newtheorem{rem}[thm]{Remark}
\title[Class groups as Galois modules -- the nonabelian case]
 {Determining class groups as Galois modules up to equivalence for some
nonabelian extensions}
\author[C.~Greither]{Cornelius Greither}
 \address{Fakult\"at Informatik, Universit\"at der Bundeswehr M\"unchen,
 85577 Neubiberg, Germany}
 \email{cornelius.greither@unibw.de}
 \author[T.~Kataoka]{Takenori Kataoka}
\address{Department of Mathematics, Faculty of Science Division II, Tokyo University of Science.
1-3 Kagurazaka, Shinjuku-ku, Tokyo 162-8601, Japan}
\email{tkataoka@rs.tus.ac.jp}
\newcommand{\lrang}[1]{\langle #1 \rangle}
\newcommand{\cyc}{{\rm cyc}}
\newcommand{\cG}{\mathcal{G}}
\newcommand{\cK}{\mathcal{K}}
\newcommand{\sV}{\mathscr{V}}
\newcommand{\sW}{\mathscr{W}}
\DeclareMathOperator{\cd}{cd}
\DeclareMathOperator{\Res}{Res}
\newcommand{\Gamm}{\Gamma}
\begin{document}
\maketitle

\begin{abstract}
In previous papers, the Galois module structure of minus class groups was studied for abelian CM extensions.
In this paper, we discuss some nonabelian cases, focusing on metacyclic extensions.
For a certain class of these, we obtain a complete description of the Galois modules
that occur as minus class groups, modulo a certain equivalence relation on modules,
which was introduced earlier by the same authors.
\end{abstract}

%%%%%%%%%%%%%%%%%%%%%%%%%%%%%%%
\section{Introduction}
%%%%%%%%%%%%%%%%%%%%%%%%%%%%%%%

Let $L/K$ be a finite Galois CM extension of number fields, i.e., $L$ is a CM-field and $K$ is a totally real field.
Let $\Cl_L^{T}$ be the $T$-ray class group of $L$, where $T$ 
is an auxiliary finite set of primes.
We study the Galois module structure of $\Cl_L^T$ or, rather, its minus part $\Cl_L^{T, -}$ with respect to the complex conjugation $j \in \Gal(L/K)$.

%%%%%%%%%%%%%%%%%%%%%%%%%%%%%%%%%%%%
\subsection{Review of the abelian case}
%%%%%%%%%%%%%%%%%%%%%%%%%%%%%%%%%%%%

In the paper \cite{GK} a notion of equivalence for modules over
a commutative group ring was developed.
This was then applied in \cite{GK2} to the study of $\Cl_L^{T, -}$ for abelian CM extensions $L/K$.

To be precise, let $\Gamma$ be a finite abelian group. 
Let $\cC$ be the category of finite $\Z'[\Gamma]$-modules, where we put $\Z' = \Z[1/2]$.
In \cite{GK} we introduced an equivalence relation $\sim$ on $\cC$.
The direct sum $\oplus$ defines a commutative monoid structure on the set $\Cmodsim$.

For each abelian CM extension $L/K$ such that $\Gal(L/K) \simeq \Gamma \times \{1, j\}$, we have a natural identification $\Z[\Gal(L/K)]^- \simeq \Z'[\Gamma]$.
Therefore, we can talk about the class of $\Cl_L^{T, -}$ in $\Cmodsim$.
We call an element of $\Cmodsim$ a realizable class if it is the class of $\Cl_L^{T, -}$ for some $L/K$.
Let $\cZ^{\rea} \subset \Cmodsim$ be the subset of realizable classes.

To study the structure of $\cZ^{\rea}$, in \cite{GK2} we introduced 
 a submonoid $\cZ^{\adm} \subset \Cmodsim$, whose elements are said to be admissible.
It is defined so that $\cZ^{\rea} \subset \cZ^{\adm}$, which means that we have a 
necessary condition for  realizability; roughly speaking, the restrictions are given by ramification data.
Indeed, we showed that typically only a small proportion
of all finite modules can appear, up to equivalence,
as a module $\Cl_L^{T, -}$.
This is made quantitative in a way by demonstrating
how small $\cZ^{\adm}$ is in $\Cmodsim$ (see the discussion after \cite[Corollary 1.4]{GK2}).

Moreover, we also proved some constructive results:
certain modules can indeed be realized as $\Cl_L^{T, -}$, up to equivalence,
by some extension $L/K$. 
This is formulated as $\cZ^{\rea} = \cZ^{\adm}$ in some cases (\cite[Theorem 1.2]{GK2}).

%%%%%%%%%%%%%%%%%%%%%%%%%%%%%%%
\subsection{Aim of this paper}
%%%%%%%%%%%%%%%%%%%%%%%%%%%%%%%

We now propose to discuss some situations where
$L/K$ is no longer assumed to be abelian. 
Again we are
interested in a priori
restrictions on $\Cl_L^{T, -}$ as a Galois module. We would also like to explore 
constructive results, and for some small
nonabelian groups we are able to do this.

First in \S \ref{sec:alg}, we show that the algebraic theory of equivalence of modules can be generalized to 
the nonabelian case in a straightforward way.
Thus, for a finite group $\Gamma$, we again have the monoid $\Cmodsim$ of classes of finite $\Z'[\Gamma]$-modules.
We will also show that as in the abelian case there is a natural injective homomorphism $\Phi: \Cmodsim \hookrightarrow \Lat^{\pe}_{\Z'[\Gamma]}$, where $\Lat^{\pe}_{\Z'[\Gamma]}$ denotes a certain monoid of lattices.
This enables us to study finite modules via lattices.

In \S \ref{sec:arith}, we define the subset of realizable classes $\cZ^{\rea} \subset \Cmodsim$ in the same way as in the abelian case.
We then show that the key exact sequence involving $\Cl_L^{T, -}$, which determines the class of $\Cl_L^{T, -}$ in $\Cmodsim$,
generalizes as well. That sequence goes back
to work \cite{AK} of Atsuta and the second author, and is in a way inherent
in \cite{GK2}. 
By this we can define $\cZ^{\adm} \subset \Cmodsim$ such that $\cZ^{\rea} \subset \cZ^{\adm}$ a priori.

Discussions of this kind of generalization tend to be dull and un-enlightening
if all details are spelled out; but on the other hand, a certain minimum
of detail should be given in order to be convincing. We hope
to have made reasonable choices in this respect.

%%%%%%%%%%%%%%%%%%%%%%%%%%%%%%%
\subsection{Structure of admissible classes}
%%%%%%%%%%%%%%%%%%%%%%%%%%%%%%%

We now fix an odd prime $p$.
For a Galois CM extension $L/K$ with $\Gal(L/K) \simeq \Gamma \times \{1, j\}$, we study the $p$-part ${}_p \Cl_L^{T, -}$ of $\Cl_L^{T, -}$ as a $\Z_p[\Gamma]$-module.
Instead of $\Cmodsim$, we naturally consider the monoid $\Cpmodsim$ of finite $\Z_p[\Gamma]$-modules up to equivalence.
We define $\cZ_p^{\rea} \subset \cZ_p^{\adm} \subset \Cpmodsim$ in a similar manner.
Taking the $p$-parts simplifies several things.

In \S \ref{sec:metaab}, we will discuss
an explicit class of nonabelian groups
\[
\Gamm = C_p \rtimes C_r,
\]
where $C_p$ and $C_r$ are cyclic groups of order $p$ and $r$ respectively, and $C_r$ acts on $C_p$ faithfully (so $r$ is a divisor of $p-1$).
The prime $p$ is here the same as above, that is, we will be
looking at the $p$-adified situation. 
We do not assume that $r$ is a prime.
A prototypical
case is $\Gamm=S_3$ with $p = 3$ and $r = 2$. 

As the first main result, we will determine the structure of the monoid $\cZ_p^{\adm}$ of admissible classes.
To do this, we first determine the structure of $\Cpmodsim$ by using the classification of
indecomposable lattices over $\Z_p[\Gamma]$, which has been known for a long time.
We will then see
that only a rather small proportion
is admissible:

\begin{thm}\label{thm:11}
Let $\Gamm = C_p \rtimes C_r$.
\begin{itemize}
\item[(1)]
The monoid $\Cpmodsim$ is not free unless $r = 1$, and it generates a free abelian group of rank $2r-1$.
\item[(2)]
The submonoid $\cZ_p^{\adm}$ is a free monoid of rank $\sigma_0(r)$, 
the number of positive divisors of $r$.
\end{itemize}
\end{thm}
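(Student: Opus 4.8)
The plan is to reduce both parts to a concrete description of the Krull–Schmidt data for $\Z_p[\Gamma]$-lattices, using the injection $\Phi$ from \S\ref{sec:alg} to transport questions about finite modules into the monoid $\Lat^{\pe}_{\Z_p[\Gamma]}$. First I would recall the classification of indecomposable $\Z_p[\Gamma]$-lattices for $\Gamma = C_p \rtimes C_r$. Since $C_r$ acts faithfully on $C_p$, the group ring $\Z_p[\Gamma]$ decomposes rationally according to the characters of $C_r$ together with the faithful $p$-dimensional piece, and the relevant integral representation theory (of ``cyclic-by-cyclic'' type, going back to work on $\Z_p[C_p \rtimes C_r]$) produces exactly $2r$ indecomposable lattices: the $r$ ``trivial-type'' rank-one lattices $L_\chi$ pulled back from the characters $\chi$ of $C_r$, and $r$ further indecomposables $M_\chi$ of rank $p-1$ over $\Z_p$, one for each $\chi$, reflecting the possible gluings along the augmentation. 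By Krull–Schmidt over the complete local ring $\Z_p$, the monoid $\Lat_{\Z_p[\Gamma]}$ of lattices under $\oplus$ is free on these $2r$ generators. The subtlety is that $\Lat^{\pe}$ is not all of $\Lat$ but the submonoid (or quotient monoid) cut out by the ``projective-equivalence'' relation used to define $\sim$: one identifies a lattice with its sum with any projective, i.e.\ one works modulo the class of $\Z_p[\Gamma]$ itself. Since $\Z_p[\Gamma] \cong \bigoplus_\chi L_\chi \oplus \bigoplus_\chi (\text{something involving } M_\chi)$ — more precisely the regular representation is a specific element of the free monoid on the $2r$ generators — killing this one relation drops the rank by one, giving a free abelian group of rank $2r-1$. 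This proves the group-completion statement in (1); the failure of freeness of $\Cpmodsim$ itself when $r \geq 2$ comes from the fact that $\Phi$ need not be surjective and that the image is a submonoid of the free group which is itself not free (e.g.\ because certain sums $L_\chi \oplus M_\chi$ become equivalent to genuine lattices only after adding projectives, forcing relations among the finite-module classes that are not present at the lattice level). For $r=1$ we have $\Gamma = C_p$, the classical case where $\Cpmodsim$ is free of rank $1$, consistent with $2r-1 = 1$.

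For part (2), I would use the description of $\cZ_p^{\adm}$ coming from the key exact sequence of \S\ref{sec:arith}: an admissible class is, by definition, one realized by a module fitting into that sequence with the prescribed ramification data, and after $p$-adification the constraints become purely local and combinatorial. Concretely, the admissibility condition forces the lattice $\Phi(X)$ of an admissible $X$ to be built only from the ``cohomologically trivial'' pieces — those lattices that can appear as the lattice part of a module killed by an appropriate Euler-characteristic/Herbrand-quotient condition at each ramified prime — and these turn out to be indexed precisely by the divisors $d \mid r$: for each such $d$ one gets one indecomposable admissible generator, corresponding to the subgroup $C_d \leq C_r$ and the unique faithful-type lattice for the quotient situation $C_p \rtimes C_d$ pulled up to $\Gamma$. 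I would verify that (i) each of these $\sigma_0(r)$ classes is admissible, by exhibiting the relevant local data (this is the constructive half and should be a direct check against the definition of $\cZ_p^{\adm}$), and (ii) conversely every admissible class is a non-negative combination of these, by analyzing the exact sequence one ramified prime at a time and observing that the contribution at each prime lies in the submonoid generated by the $L_d$; the independence of the $\sigma_0(r)$ generators then follows from their independence already in $\Lat^{\pe}_{\Z_p[\Gamma]}$, which we control via Krull–Schmidt. Hence $\cZ_p^{\adm}$ is free of rank $\sigma_0(r)$.

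The main obstacle, I expect, is part (1)'s claim that $\Cpmodsim$ \emph{fails} to be free for $r \geq 2$: proving non-freeness of a submonoid of a free abelian group requires pinning down the image of $\Phi$ precisely and exhibiting an explicit additive relation (an equation $A \oplus P \sim B \oplus P$ with $A \not\cong B$ as lattices, where $P$ is projective) that cannot be ``cancelled'' — equivalently, finding two distinct minimal generating expressions for a single class. This means one must understand not just which lattices occur but which \emph{extensions} of finite modules by projectives collapse under $\sim$, i.e.\ one needs the full computation of $\Hom$ and $\Ext^1$ among the indecomposable lattices $L_\chi, M_\chi$ over $\Z_p[\Gamma]$. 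The remaining pieces — the rank count $2r-1$, and the clean divisor-count for $\cZ_p^{\adm}$ — are comparatively mechanical once the indecomposable-lattice classification and the exact sequence of \S\ref{sec:arith} are in hand.
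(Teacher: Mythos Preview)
Your overall strategy---transport to $\Lat_p^{\pe}$ via $\Phi$ and use Krull--Schmidt---matches the paper, but your account of part (1) contains a structural error that would derail the proof. There are not $2r$ but $3r$ indecomposable $\Z_p[\Gamma]$-lattices: the rank-one lattices $L_1^i$, the rank-$(p-1)$ lattices $L_2^i$, \emph{and} the rank-$p$ projective indecomposables $L_3^i$ (one for each character of $C_r$). The regular representation decomposes as $\Z_p[\Gamma] \cong \bigoplus_i L_3^i$, so it is \emph{not} a combination of your $L_\chi$ and $M_\chi$. Passing to projective equivalence simply kills the $L_3^i$, leaving $\Lat_p^{\pe}$ genuinely free of rank $2r$---not $2r-1$ as you claim. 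The drop in rank from $2r$ to $2r-1$ occurs at the next step, and for a different reason: by Proposition~\ref{prop:fin_lat} the image of $\Phi$ consists of those lattices rationally isomorphic to a projective, and the key arithmetic input is that all the $\Q_p \otimes L_2^i$ coincide (they are the same $(p-1)$-dimensional representation), whereas the $\Q_p \otimes L_1^i$ are the $r$ distinct characters. This forces the image of $\Phi$ to be $\{(a_i,b_i)\in\N^{2r} : \sum a_i = \sum b_i\}$, a codimension-one submonoid. Your proposal locates the rank drop in the wrong place and would not produce this description.

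The non-freeness argument is then much simpler than you anticipate (no $\Ext$ computation needed): the submonoid $\{\sum a_i = \sum b_i\}$ has exactly $r^2$ indecomposable elements (pick one $a_i$ and one $b_j$ equal to $1$), and $r^2 > 2r-1$ for $r>1$, so it cannot be free. For part (2), your instinct that the generators are indexed by divisors of $r$ is right, but the mechanism is not a Herbrand-quotient condition: one classifies the admissible pairs $(D,I)$ with $p \mid \#I$ and $j \notin D$, computes ${}_pA_{I,D}^-$ explicitly as $\F_p[C_r]/(\tau^{r/e}\pm 1)$, applies $\Phi$ to get concrete sums $\bigoplus_{e\mid i}(L_1^i \oplus L_2^{i+1})$, and then checks linear independence of the resulting coefficient vectors by a row-echelon argument.
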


If $r > 1$, then the rank $\sigma_0(r)$ of $\cZ_p^{\adm}$ is strictly smaller than the rank $2r-1$ of $\Cpmodsim$.

%%%%%%%%%%%%%%%%%%%%%%%%%%%%%%%
\subsection{Realizability problem}
%%%%%%%%%%%%%%%%%%%%%%%%%%%%%%%

Finally we discuss the realizability problem.
In \S \ref{sec:realize}, we will handle a rather wide class of finite groups.
As a special case, in \S \ref{sec:rea_metacyc},
we will prove for the class of
groups $\Gamma = C_p \rtimes C_r$ that 
all admissible classes
are in fact realizable:

\begin{thm}\label{thm:12}
For $\Gamm = C_p \rtimes C_r$, 
we have $\cZ_p^{\rea} = \cZ_p^{\adm}$.
\end{thm}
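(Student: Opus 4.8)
The plan is to prove $\cZ_p^{\rea} = \cZ_p^{\adm}$ by the inclusion $\cZ_p^{\rea} \subseteq \cZ_p^{\adm}$, which holds a priori by construction, together with the reverse inclusion. By Theorem~\ref{thm:11}(2), $\cZ_p^{\adm}$ is a \emph{free} monoid of rank $\sigma_0(r)$; since $\cZ_p^{\rea}$ is closed under $\oplus$ (concatenate two CM extensions over a common $K$, or pass to a compositum) and contains the trivial class, it is a submonoid, so it suffices to exhibit $\sigma_0(r)$ realizable classes that generate $\cZ_p^{\adm}$ — namely, to realize each of the free generators. Thus the whole theorem reduces to a finite list of explicit constructions, one for each positive divisor $d \mid r$.

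The key steps I would carry out are as follows. First, I would identify concretely the free generators of $\cZ_p^{\adm}$ coming from the proof of Theorem~\ref{thm:11}(2): each should correspond to a divisor $d$ of $r$, and I expect the $d$-th generator to be built from the lattice/module data associated to the subfield of $\Gamma$ fixed by the index-$d$ subgroup of $C_r$, i.e.\ to the faithful $\Z_p[C_p \rtimes C_{r/d}]$-structure pulled back along $\Gamma \twoheadrightarrow C_p \rtimes C_{r/d}$, or perhaps rather to characters of $C_r$ of order $d$. Second, I would invoke the general realizability machinery of \S\ref{sec:realize}: that section "handles a rather wide class of finite groups," and the task in \S\ref{sec:rea_metacyc} is to check that $C_p \rtimes C_r$ falls into that class and that the modules realized there, via a suitable choice of base field $K$, tame ramification data, and auxiliary set $T$, hit each generator. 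Concretely, for a chosen generator one picks a totally real $K$ and a $\Gamma$-extension $L/K$ (necessarily a CM-field with the right Galois group and with $j$ central) whose ramification is arranged so that the Atsuta--Kataoka exact sequence forces ${}_p\Cl_L^{T,-}$ to lie in the prescribed equivalence class; the freedom in $T$ absorbs any discrepancy up to equivalence.

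The main obstacle I anticipate is the construction step: producing, for each $d \mid r$, a genuine CM extension $L/K$ of number fields with $\Gal(L/K) \cong (C_p \rtimes C_r) \times \{1,j\}$ whose arithmetic — class group, ramification, and the behavior of the relevant Iwasawa-theoretic or cohomological invariants feeding the exact sequence — is controlled precisely enough to pin down the equivalence class. Existence of $\Gamma$-extensions with prescribed local ramification is a Grunwald--Wang / embedding-problem question (here $\Gamma \times C_2$ is solvable, so this is tractable, but one must watch the Wang-type obstruction at $2$, which is why $\Z' = \Z[1/2]$ and the odd prime $p$ are in force), and then one must verify that the resulting $\Cl_L^{T,-}$ is not merely admissible in the abstract but actually equivalent to the intended generator — this requires a computation of the module up to $\sim$, presumably via the injection $\Phi$ into lattices and a matching of indecomposable summands. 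I would expect to handle the generator indexed by $d = r$ (the "most faithful" one) first as the representative case, since the prototype $\Gamma = S_3$, $p = 3$ should already exhibit all the essential phenomena, and then deduce the general $d$ by an inflation/functoriality argument reducing to the quotient $C_p \rtimes C_{r/d}$ in place of $\Gamma$, or by simply running the same construction with a different choice of ramified primes.
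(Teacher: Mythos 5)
Your high-level plan—reduce to exhibiting generators of the free monoid $\cZ_p^{\adm}$ and then construct CM extensions realizing them—is in the right spirit, but there are two genuine gaps that the paper's actual argument is specifically engineered to avoid.

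First, your reduction hinges on $\cZ_p^{\rea}$ being a submonoid of $\Cpmodsim$, and the justification you offer (compositum of CM extensions over a common $K$) does not work: if $L_1/K$ and $L_2/K$ both have group $G$, the compositum $L_1L_2/K$ has Galois group a subgroup of $G\times G$, not $G$, and $\Cl_{L_1L_2}^{T,-}$ is not $\Cl_{L_1}^{T,-}\oplus\Cl_{L_2}^{T,-}$. The paper sidesteps this entirely by proving a \emph{simultaneous} realizability statement: Theorem~\ref{thm:real_metab} produces, for any finite family $\{(D_i,I_i)\}$ with $D_i\supset C_p$ and $I_i=D_i$, a single totally real $\Gamma$-extension $L^+/K$ realizing the entire family at once. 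Thus an arbitrary admissible class (an arbitrary sum of generators) is hit directly, and no a priori monoid structure on $\cZ_p^{\rea}$ is needed.

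Second, and more subtly, you cannot realize arbitrary local pairs $(D,I)$ by the going-up machinery: Remark~\ref{rem:real_fail} shows that Proposition~\ref{prop:going_up} fails whenever $C_p\subsetneq I\subsetneq D$, because conditions (a) and (c) are incompatible. The proof survives only because of Remark~\ref{rem:Indep_I}: for this $\Gamma$, the class of ${}_p A_{I,D}^-$ depends only on $D$, not on $I$. This lets one always take $I=D$ (so (a) and (b) are trivial), and Proposition~\ref{prop:real_minimal} supplies the local extensions with (c). Your sketch never notices this independence and would run straight into the obstruction. Finally, after building the totally real $L^+/K$ one must twist by an imaginary quadratic $F/K$ to obtain a CM field with group $\Gamma\times\{1,j\}$; the local conditions on $F$ (splitting at the $v_i$ to keep $j\notin D_{v_i}$, non-splitting at the extraneous ramified primes so that Lemma~\ref{lem:vanish}(2) kills their contribution) are the mechanism by which one prevents unwanted summands in $\bigoplus_v\omega^1(A_v^-)$—a step your proposal waves at but does not supply.
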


This means that all admissible $\Z_p[\Gamma]$-modules do arise, up to equivalence, as the module ${}_p \Cl_L^{T, -}$ coming from
some CM extension of number fields having Galois group $ \Gamm\times\{1,j\}$.
Upon combining Theorems \ref{thm:11} and \ref{thm:12}, we  
arrive at a complete description  of $\cZ_p^{\rea}$ for $\Gamma = C_p \rtimes C_r$.

%%%%%%%%%%%%%%%%%%%%%%%%%%%%%%%%
\section{Algebraic preliminaries}\label{sec:alg}
%%%%%%%%%%%%%%%%%%%%%%%%%%%%%%%%

We start by recalling the definition
of equivalence of modules from \cite[\S 2]{GK}, 
generalizing it to the nonabelian case at once.
For the sake of simplicity we restrict the scope somewhat. Let $R$ be a ring of the
shape $\mathcal O[\Gamma]$, where $\mathcal O$ is $\Z$, $\Z' = \Z[1/2]$, or $\Z_p$ 
for some prime number $p$, and $\Gamma$ is a finite group.
All modules are on the left.

%%%%%%%%%%%%%%%%%%%%%%%%%%%%%%%%
%\subsection{Equivalence of modules}
%%%%%%%%%%%%%%%%%%%%%%%%%%%%%%%%

Let $\mathcal C = \cC_R$ be the category of all finite $R$-modules. 
Let $\mathcal P = \cP_R$ denote the subcategory of $\cC$ consisting of modules of projective dimension at most one ($ \pd \le1$ for short).
Note that for any module, having $\pd \le 1$ is equivalent to 
being $\Gamma$-cohomologically trivial in this case.

The following definition makes perfect sense without
any conceivable ambiguity if $\Gamma$ is not assumed abelian and we use 
left modules throughout. 

\begin{defn}
Two finite $R$-modules $X$ and $Y$
are called equivalent ($X \sim Y$) if there are finite
$R$-modules $M$ and $N$ provided with
three-step filtrations $0 \subset M' \subset M'' \subset M$ and
$0 \subset N' \subset N'' \subset N$ satisfying
\begin{itemize}
\item[(i)]
 $M$ and $N$
are $R$-isomorphic;
\item[(ii)]
 the top and bottom layers $M'$, $N'$,
$M/M''$, $N/N''$ all have $\pd \le1$ over $R$; and 
\item[(iii)]
the middle layer $M''/M'$ is
isomorphic to $X$, and the middle layer $N''/N'$ is
isomorphic to $Y$.
\end{itemize}
Then $\sim$ is indeed an equivalence relation on the objects of $\cC$.
The set $\Cmodsim$ of equivalence classes has a monoid structure induced by the direct sum 
operator $\oplus$.
\end{defn}

It is an important observation that finite modules up to equivalence can be described by lattices up to projective equivalence:

\begin{defn}
A module $L$ over $R = \cO[\Gamma]$ is called a lattice if it is finitely generated,
and without $\cO$-torsion. 
Two lattices $L$ and $L'$ are called projectively
equivalent, if there exist finitely generated projective $R$-modules
 $F$ and $F'$
such that $L \oplus F' \cong L' \oplus F$; this is an equivalence relation.
The set
of equivalence classes modulo projective equivalence is denoted
by $\Lat^{\pe} = \Lat^{\pe}_R$.
The set $\Lat^{\pe}$ has a commutative monoid structure by the direct sum $\oplus$.
\end{defn}

\begin{prop}\label{prop:fin_lat}
We have a well-defined injective homomorphism
\[
\Phi: \Cmodsim \hookrightarrow \Lat^{\pe}
\]
defined as follows:
For a module $X$ in $\mathcal C$, 
we choose a surjection $F \to X$ from a projective
lattice $F$, and define $\Phi(X)$ as its kernel.

Moreover, the image of $\Phi$ is exactly the set of classes of $L$ such that 
$\Frac(\cO) \otimes_{\cO} L \simeq \Frac(\cO) \otimes_{\cO} F$ for some projective $R$-module $F$, where $\Frac(\cO)$ denotes the field of fractions of $\cO$.
\end{prop}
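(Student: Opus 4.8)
The plan is to proceed in four stages: well-definedness of $\Phi$ on objects, the homomorphism property, injectivity, and identification of the image. First I would check that $\Phi(X)$ is independent of the chosen surjection $F \twoheadrightarrow X$. Given two surjections $F_1 \twoheadrightarrow X$ and $F_2 \twoheadrightarrow X$ with kernels $L_1, L_2$, I would use Schanuel's lemma: the pullback $P = F_1 \times_X F_2$ sits in exact sequences that give $L_1 \oplus F_2 \cong P \cong L_2 \oplus F_1$, hence $L_1$ and $L_2$ are projectively equivalent. One must also note that a surjection from a projective lattice always exists, since $X$ is finite (take a free cover and observe the kernel is $\cO$-torsion-free because $F$ is). For the homomorphism property, given $X, Y$ with surjections $F \twoheadrightarrow X$, $G \twoheadrightarrow Y$, the direct sum $F \oplus G \twoheadrightarrow X \oplus Y$ has kernel $\Phi(X) \oplus \Phi(Y)$, so $\Phi(X \oplus Y) = \Phi(X) + \Phi(Y)$; the identity (the zero class) maps to the class of projectives, which is the neutral element of $\Lat^{\pe}$.

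Next comes the compatibility of $\Phi$ with the equivalence relation $\sim$ together with injectivity; I expect this to be the main obstacle, since it is the heart of the statement and the place where the nonabelian generality must be handled with care. The key mechanism, following \cite{GK}, is a ``horseshoe''-type argument: if $0 \subset M' \subset M'' \subset M$ is a three-step filtration with $M'$ and $M/M''$ of $\pd \le 1$, and $M''/M' \cong X$, then choosing a projective resolution $0 \to \Phi(X) \to F \to X \to 0$ and using that modules of $\pd \le 1$ are precisely the cohomologically trivial ones (hence behave like ``relative projectives''), one can splice resolutions to produce a projective lattice $\widetilde F$ surjecting onto $M$ with kernel projectively equivalent to $\Phi(X)$; the top and bottom layers, being $\pd \le 1$, contribute only projective summands and thus vanish in $\Lat^{\pe}$. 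Applying this to both $M$ and $N$ in the definition of $X \sim Y$ and using $M \cong N$ gives $\Phi(X) = \Phi(Y)$, so $\Phi$ descends to $\Cmodsim$. For injectivity, suppose $\Phi(X) = \Phi(Y)$, i.e. $L_X \oplus F' \cong L_Y \oplus F$ with $F, F'$ projective, where $0 \to L_X \to P_X \to X \to 0$ and $0 \to L_Y \to P_Y \to Y \to 0$. Then $0 \to L_X \oplus F' \to P_X \oplus F' \to X \to 0$ and the isomorphic module $L_Y \oplus F$ sitting inside $P_Y \oplus F$ let one build a single module $M$ with a three-step filtration whose middle layer is $X$ on one side and $Y$ on the other, exhibiting $X \sim Y$; I would set this up by taking $M$ to be a pushout/extension realizing both quotients and checking the layer conditions directly.

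Finally, for the image: if $L$ is in the image, say $L = \Phi(X) = \ker(F \twoheadrightarrow X)$ with $X$ finite, then tensoring $0 \to L \to F \to X \to 0$ with $\Frac(\cO)$ kills $X$ (it is $\cO$-torsion), giving $\Frac(\cO)\otimes L \cong \Frac(\cO)\otimes F$. Conversely, suppose $L$ is a lattice with $\Frac(\cO)\otimes_\cO L \simeq \Frac(\cO)\otimes_\cO F$ for some projective $R$-module $F$. I would argue that one may assume $F$ is free (stably, replacing $L$ and $F$ by $L\oplus F'$ and $F\oplus F'$ to absorb the projective into a free module changes nothing in $\Lat^{\pe}$), so fix an embedding $L \hookrightarrow F$ of lattices that becomes an isomorphism after $\otimes\Frac(\cO)$; then $F/L$ is a finitely generated $R$-module that is $\cO$-torsion, hence finite, and $L = \Phi(F/L)$ by construction. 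This shows $L$ lies in the image and completes the proof. The one point requiring a little care throughout is the passage between ``projective'' and ``free,'' and the identification of $\pd \le 1$ with cohomological triviality, but the excerpt grants us the latter, so the splicing arguments go through verbatim as in the abelian case of \cite{GK}.
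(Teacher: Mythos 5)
Your arguments for well-definedness (via Schanuel's lemma), the homomorphism property, the compatibility of $\Phi$ with $\sim$ in the direction $X \sim Y \Rightarrow \Phi(X)=\Phi(Y)$, and the characterization of the image are all correct, and the image argument matches the paper's. The paper itself proves the proposition by citing \cite[Theorem 4.2]{GK} and observing that the nonzerodivisors appearing there (a $z$ annihilating $X$, and the $f$ of \cite[Prop.~2.5]{GK}) must be chosen central, which is easy since one can take them in $\cO$. So the paper outsources the hard work to that reference, while you attempt a self-contained proof.

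The gap is precisely in the injectivity step, which you yourself flag as the main obstacle but then leave at the level of a sketch: ``I would set this up by taking $M$ to be a pushout/extension realizing both quotients and checking the layer conditions directly.'' This does not go through as stated. After normalizing to $L := L_X\oplus F' = L_Y\oplus F$ sitting in two projective lattices $P$ and $Q$ with $P/L\cong X$ and $Q/L\cong Y$, the candidates one obtains from a pushout $P\oplus_L Q$ or from reducing by $zL$, $zP$, $z^2L$, etc.\ have top or bottom layers of the form $L/zL$, $L/zP$, $N/zN$, \dots, and none of these need have $\pd\le 1$: for example with $R=\Z_p[C_p]$ and $L=\Z_p$ the trivial lattice, $L/pL=\Z/p$ is not cohomologically trivial, and $L/zP$ is a syzygy $\omega^1(X)$ which has $\pd\le 1$ only if $\pd X\le 2$, which can fail over group rings. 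So the filtered modules $M\cong N$ with middle layers $X$ and $Y$ and $\pd\le 1$ outer layers require a genuinely more careful construction than the one you gesture at. You should either reproduce the explicit construction from \cite[Theorem 4.2]{GK} (noting that the central element can be taken in $\cO$, which is the only new point in the nonabelian setting), or cite that result as the paper does, rather than asserting that ``the splicing arguments go through verbatim.''
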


\begin{proof}
In the abelian case this is shown in \cite[Theorem 4.2]{GK}.
In the proof, we used a nonzerodivisor $z$ that annihilates a module 
$X$ over  $R$. In the nonabelian context, we need to choose
$z$ central, but this is easily done; simply take a nonzero element of $\cO$
that annihilates the finite module $X$. The rest of the argument is
more or less unchanged. Likewise, the nonzerodivisor $f$ in the proof
of \cite[Proposition 2.5]{GK}, which is used in  \cite[Theorem 4.2]{GK}, should be chosen central.
   The statement about the image of $\Phi$ follows from the construction:
    if $X\in \mathcal C$ and $0 \to F' \to F \to X \to 0$ is exact with $F$ projective,
    then as $X$ is finite, $F'$ and $F$ become isomorphic after applying $\Frac(\cO) \otimes_{\cO} (-)$.
The converse is shown in a similar way.
\end{proof}

\begin{rem}\label{rem:Latpe_free}
When $\cO = \Z_p$, we have the Krull--Schmidt theorem:
Any lattice can be written as a direct sum of indecomposable lattices in a unique way.
Indeed, all indecomposable lattices have local 
endomorphism rings. (The reason for this is that we are dealing with
$p$-adically complete algebras, and therefore idempotents can
always be lifted; the endomorphism ring of an indecomposable module
contains no nontrivial idempotents, so it must be local.)
It then follows that the commutative monoid $\Lat^{\pe}$ is free on the set of non-projective, indecomposable lattices modulo isomorphism.
\end{rem}

We will need that one main technical device remains operational.
This  is a kind of first syzygy operator $\omega^1$, also called the shift operator.

\begin{prop}
We have a well-defined automorphism 
\[
\omega^1: \Cmodsim \overset{\simeq}{\to} \Cmodsim
\]
defined as follows:
For a module $X$ in $\cC$, we choose a surjection $P \to X$ from a module $P$ in $\cP$,
and define $\omega^1(X)$ as its kernel.
\end{prop}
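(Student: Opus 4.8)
The plan is to transcribe the treatment of the shift operator from \cite[\S 2]{GK}, the only nonabelian novelty being that the auxiliary nonzerodivisors must be taken central; as already noted in the proof of Proposition~\ref{prop:fin_lat}, this is automatic here, since any nonzero $z \in \cO$ annihilating a given finite module is central and a nonzerodivisor. First I would check that a presentation $P \twoheadrightarrow X$ with $P \in \cP$ always exists: take a surjection $F \twoheadrightarrow X$ with $F$ a projective lattice and some $z \in \cO \setminus \{0\}$ with $zX = 0$; then $P := F/zF$ lies in $\cP$ (since $0 \to F \xrightarrow{z} F \to F/zF \to 0$ is a length-one projective resolution), it surjects onto $X$ because $zF \subseteq \ker(F \to X)$, and the resulting $\omega^1(X)$ is a finite subquotient of $F$, hence an object of $\cC$.

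Next comes well-definedness, in two steps. Independence of the choice of $P$: given $P_1 \twoheadrightarrow X$ and $P_2 \twoheadrightarrow X$ in $\cP$ with kernels $K_1, K_2$, the fibre product $Q := P_1 \times_X P_2$ is finite and fits in $0 \to K_2 \to Q \to P_1 \to 0$ and $0 \to K_1 \to Q \to P_2 \to 0$; equipping the single module $Q$ with the filtrations $0 \subseteq 0 \subseteq K_2 \subseteq Q$ and $0 \subseteq 0 \subseteq K_1 \subseteq Q$ (whose bottom layers $0$ and top layers $P_1, P_2$ all have $\pd \le 1$) exhibits $K_1 \sim K_2$ straight from the definition of $\sim$. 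Independence of the equivalence class of $X$: here it suffices to establish two elementary compatibilities, namely $\omega^1(B/A) \sim \omega^1(B)$ when $0 \to A \to B \to B/A \to 0$ is exact with $A \in \cP$, and $\omega^1(B) = \omega^1(C)$ when $0 \to B \to C \to C/B \to 0$ is exact with $C/B \in \cP$. The first follows by composing a $\cP$-presentation $P \twoheadrightarrow B$ with $B \twoheadrightarrow B/A$: the module $\widetilde{A} := \ker(P \to B/A)$ represents $\omega^1(B/A)$ and sits in $0 \to \omega^1(B) \to \widetilde A \to A \to 0$, so a filtration argument as above gives $\widetilde A \sim \omega^1(B)$. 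The second follows because the preimage $P' \subseteq P$ of $B$ under a $\cP$-presentation $P \twoheadrightarrow C$ again has $\pd \le 1$ — the usual $\Ext$ long exact sequence, using $\pd(C/B) \le 1$ — and $P' \twoheadrightarrow B$ has kernel $\omega^1(C)$. Applying these two moves to the defining filtrations of $X \sim Y$, with $(A,B) = (M',M'')$ and $(B,C) = (M'',M)$ and symmetrically for $Y$, yields $\omega^1(X) \sim \omega^1(M) = \omega^1(N) \sim \omega^1(Y)$. That $\omega^1$ is a monoid homomorphism is clear by taking direct sums of presentations, $\cP$ being closed under $\oplus$.

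Finally, bijectivity, which I would obtain by exhibiting a two-sided inverse $\omega^{-1}$ rather than arguing abstractly. Every finite $R$-module $X$ embeds into some $P \in \cP$: with $z$ as above, choose a surjection $F \twoheadrightarrow \Hom_\cO(X, \cO/z\cO)$ from a projective lattice and apply $\Hom_\cO(-, \cO/z\cO)$ to get $X \hookrightarrow \Hom_\cO(F, \cO/z\cO) \cong F/zF \in \cP$. Setting $\omega^{-1}(X) := [P/X]$ for any such embedding, the pushout argument dual to the ones above shows $\omega^{-1}$ is well defined on $\Cmodsim$ and is a monoid homomorphism. The identities $\omega^1 \circ \omega^{-1} = \id$ and $\omega^{-1} \circ \omega^1 = \id$ are then immediate: a short exact sequence $0 \to X \to P \to P/X \to 0$ with $P \in \cP$ simultaneously presents $P/X$ as $\omega^{-1}(X)$ and $X$ as $\omega^1(P/X)$, and symmetrically. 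I expect the only point needing genuine care to be the bookkeeping in the two well-definedness steps — checking, via the characterization of $\pd \le 1$ modules as the cohomologically trivial ones together with the appropriate $\Ext$ sequences, that the modules produced by the fibre-product, pushout, and preimage constructions really retain $\pd \le 1$; once that is in hand the whole argument is formal, exactly as in \cite{GK}.
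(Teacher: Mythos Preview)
Your proposal is correct and follows essentially the same line as the paper: both amount to ``transport the abelian argument from \cite{GK}, choosing the auxiliary nonzerodivisors to be central elements of $\cO$.'' The paper's own proof is a two-sentence citation of \cite[Definition~3.4]{GK} and \cite[Theorem~3.19]{Ka}, whereas you have written out the underlying fibre-product, preimage, and duality constructions explicitly; your treatment of bijectivity via the concrete inverse $\omega^{-1}(X) = [P/X]$ is more self-contained than the paper's reference to \cite{Ka}, but the content is the same.
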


\begin{proof}
In the abelian case this is shown in \cite[Definition 3.4]{GK} (where the notation
$X[1]$ is used instead of $\omega^1(X)$) and the following discussion, which 
relies on \cite[Theorem 3.19]{Ka}. The arguments in that definition
carry over without difficulty to the nonabelian case. 
\end{proof}

It is important to understand how $\Phi$ behaves with respect
to the first syzygy operator $\omega^1$.
This works exactly as in \cite[Lemma 2.4]{GK2}.

\begin{prop}\label{prop:omegaO}
We have a well-defined automorphism
\[
\Omega: \Lat^{\pe} \overset{\simeq}{\to} \Lat^{\pe}
\]
defined as follows:
For a lattice $L$, we choose a surjection $F \to L$ from a projective lattice $F$, and define $\Omega(L)$ as its kernel.
We have a commutative diagram
\[
\xymatrix{
	\Cmodsim \ar[r]^{\omega^1}_{\simeq} \ar@{^(->}[d]_{\Phi}
	& \Cmodsim \ar@{^(->}[d]^{\Phi}\\
	\Lat^{\pe} \ar[r]_{\Omega}^{\simeq}
	& \Lat^{\pe}.
	}
\]
\end{prop}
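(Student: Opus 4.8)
The plan is to follow the argument of \cite[Lemma 2.4]{GK2}, verifying that the passage to nonabelian $\Gamma$ requires only cosmetic changes. First I would establish that $\Omega$ is well defined: for a lattice $L$ a surjection $F \to L$ from a projective lattice exists because $L$ is finitely generated, and its kernel is a submodule of $F$, hence again a lattice; independence of the choices follows from Schanuel's lemma, and the induced operation is additive on direct sums and kills projective lattices in $\Lat^{\pe}$, so it descends to a monoid homomorphism $\Omega$. To see that $\Omega$ is an automorphism, I would bring in $\cO$-linear duality $L \mapsto L^\vee := \Hom_{\cO}(L, \cO)$ on lattices; since $\cO[\Gamma]$ is a symmetric $\cO$-algebra, $L^\vee$ is a projective lattice whenever $L$ is, and dualizing a presentation $0 \to \Omega(L) \to F \to L \to 0$ (which is $\cO$-split, as $L$ is $\cO$-free) yields $0 \to L^\vee \to F^\vee \to \Omega(L)^\vee \to 0$. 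This gives $\Omega(\Omega(L)^\vee) = L^\vee$ in $\Lat^{\pe}$; applying this identity to $L$ and to $L^\vee$ shows that $L \mapsto \Omega(L^\vee)^\vee$ is a two-sided inverse of $\Omega$.

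For the commutativity of the square, I would unwind both composites applied to a finite module $X$. Choose a short exact sequence $0 \to \Phi(X) \to F \to X \to 0$ with $F$ a projective lattice, realizing $\Phi(X)$. Pick a nonzero integer $z$ annihilating $X$; it lies in $\cO$, hence in the center of $R$, and it is a nonzerodivisor, so $0 \to F \xrightarrow{z} F \to F/zF \to 0$ exhibits $P := F/zF$ as a finite module of projective dimension at most one. Since $zF \subseteq \Phi(X)$, the induced surjection $P \twoheadrightarrow F/\Phi(X) = X$ has kernel $\Phi(X)/zF$, so $\omega^1(X) = \Phi(X)/zF$ in $\Cmodsim$, and there is a short exact sequence $0 \to zF \to \Phi(X) \to \omega^1(X) \to 0$ in which $zF \cong F$ is a projective lattice. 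To compute $\Phi(\omega^1(X))$, choose a surjection $G \to \omega^1(X)$ from a projective lattice $G$ and lift it through $\Phi(X) \twoheadrightarrow \omega^1(X)$ to a map $\tilde{g} \colon G \to \Phi(X)$; then $(\tilde{g}, \iota) \colon G \oplus zF \to \Phi(X)$, with $\iota$ the inclusion, is surjective, and a short computation identifies its kernel with $\ker(G \to \omega^1(X)) = \Phi(\omega^1(X))$. Since $G \oplus zF$ is itself a projective lattice, this same kernel computes $\Omega(\Phi(X))$, whence $\Phi \circ \omega^1 = \Omega \circ \Phi$.

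The two steps most deserving of care are precisely the nonabelian adjustments flagged earlier in the paper, and both are minor: one must note that a central nonzerodivisor annihilating a given finite module always exists (any integer annihilating it works, since it lies in $\cO$, which is contained in the center of $R$), and that $\cO[\Gamma]$ is a symmetric $\cO$-algebra — via the standard nondegenerate trace pairing on a group algebra — so that $\cO$-duality carries projective lattices to projective lattices. Granting these, the argument of \cite[Lemma 2.4]{GK2} goes through essentially verbatim, and I do not expect any substantive obstacle; the whole statement is bookkeeping around first syzygies and Schanuel's lemma.
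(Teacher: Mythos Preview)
Your proposal is correct and follows precisely the approach the paper indicates: the paper gives no explicit proof but simply states that ``this works exactly as in \cite[Lemma 2.4]{GK2},'' and you have carried out that argument while making the two nonabelian adjustments (central annihilator in $\cO$, and $\cO[\Gamma]$ being a symmetric algebra so that $\cO$-duality preserves projectivity) that the paper itself flags in the surrounding discussion. The Schanuel-type bookkeeping, the duality trick for inverting $\Omega$, and the explicit computation of $\Phi(\omega^1(X))$ via $\Phi(X)/zF$ are exactly what one expects from unwinding that reference.
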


%%%%%%%%%%%%%%%%%%%%%%%%%%%%%%%%
\section{Realizability and admissibility}\label{sec:arith}
%%%%%%%%%%%%%%%%%%%%%%%%%%%%%%%%

%%%%%%%%%%%%%%%%%%%%%%%%%%%%%%%%%%%%
\subsection{Realizability}
%%%%%%%%%%%%%%%%%%%%%%%%%%%%%%%%%%%%

Let $L/K$ be a finite Galois CM extension with Galois group $G$;
the element $j\in G$ will always denote complex conjugation. 
Let $T$ be a finite set of finite primes of $K$ such that 
\begin{itemize}
\item
$L/K$ is unramified at any $v \in T$ and
\item
$L$ has
no nontrivial roots of unity congruent to 1 at all $w \in T_L$, where $T_L$ denotes the set of places of $L$ above
places $v \in T$. 
\end{itemize}
We do not 
reproduce the definition of the $T$-modified
class group $\Cl_L^{T}$ here, but instead we refer to \cite[\S 2.4]{GK2}.
We consider its minus part $\Cl_L^{T, -}$ as a finite module over $\Z[G]^- = \Z'[G]/(1+j)$.

The influence of the set $T$ is rather limited; often it may be disregarded by the following lemma (see \cite[Lemma 2.5]{GK2}).

\begin{lem}
The following hold.
\begin{itemize}
\item[(1)]
The equivalence class of $\Cl_L^{T,-}$ as a finite $\Z[G]^-$-module
is independent of $T$ as long as it satisfies the hypothesis described above.
\item[(2)]
Let $p$ denote an odd prime.
If $L$ does not contain $\mu_p$, then the $p$-parts of the two modules 
$\Cl_L^{-}$ and $\Cl_L^{T,-}$ are equivalent as finite $\Z_p[G]^-$-modules.
\end{itemize}
\end{lem}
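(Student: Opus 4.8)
The statement to prove is Lemma (the final numbered statement), which has two parts about the independence of the equivalence class of $\Cl_L^{T,-}$ from the choice of $T$. The plan is to reduce everything to the behaviour of the Galois module $\Cl_L^{T,-}$ under enlarging $T$, since any two admissible sets $T_1$ and $T_2$ can be compared via $T_1 \cup T_2$. So first I would fix $T \subset T'$ both satisfying the hypotheses, and analyse the cokernel of the natural surjection $\Cl_L^{T',-} \twoheadrightarrow \Cl_L^{T,-}$ coming from the definition of the $T$-modified class group.

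The key point is that this cokernel, as well as the kernel, is built out of modules of the shape $(\cO_{L,w}/\ldots)^-$ or more precisely out of the minus parts of the local unit-type contributions indexed by the extra primes in $T' \setminus T$; each such contribution is an induced module $\Z'[G] \otimes_{\Z'[G_w]} (\text{something})$, where $G_w$ is a decomposition group. Using that $L/K$ is unramified at the primes of $T$ (so the relevant decomposition groups are cyclic, generated by a Frobenius) together with the root-of-unity hypothesis, one checks that these induced pieces have $\pd \le 1$ over $\Z'[G]^-$. Then the definition of the equivalence relation $\sim$ applies essentially verbatim: we have a short exact sequence $0 \to A \to \Cl_L^{T',-} \to \Cl_L^{T,-} \to 0$ (or a slightly longer filtration) with the outer layers of $\pd \le 1$, which by the three-step-filtration definition gives $\Cl_L^{T',-} \sim \Cl_L^{T,-}$. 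This yields (1). For (2), I would localize at $p$: if $\mu_p \not\subset L$ then the minus part of the $p$-completed global units / $T$-units is itself cohomologically trivial, so passing from $\Cl_L^{-}$ to $\Cl_L^{T,-}$ changes the module only by $\pd \le 1$ layers, and the same filtration argument gives the equivalence over $\Z_p[G]^-$.

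The main obstacle I anticipate is the verification that the ``extra'' local contributions attached to primes $v \in T' \setminus T$ really do have projective dimension at most one over $\Z'[G]^-$ (equivalently, are $G$-cohomologically trivial after taking minus parts). This is where the two hypotheses on $T$ are used in an essential way: unramifiedness makes the decomposition group procyclic, and the condition on roots of unity congruent to $1$ at places in $T_L$ ensures the relevant finite module has no cohomology — one computes Herbrand quotients / Tate cohomology of the cyclic group $G_w$ and sees the obstruction vanishes exactly under that hypothesis. Since this is precisely the content of \cite[Lemma 2.5]{GK2} in the abelian case, and the minus-part ring $\Z'[G]^-$ behaves the same way (the relevant modules are induced from decomposition groups, and induction preserves $\pd \le 1$), I expect the argument to go through with only cosmetic changes; the write-up should simply isolate this cohomological-triviality claim, prove it by a decomposition-group computation, and then invoke the three-step filtration definition to conclude.
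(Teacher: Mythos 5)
Your plan is essentially the paper's argument, just organized slightly differently. The paper writes down the class‑field‑theory exact sequence
\[
\cO_L^{\times} \to \prod_{w \in T_L} \kappa(w)^{\times} \to \Cl_L^{T}  \to \Cl_L \to 0,
\]
notes that $\prod_{w \in T_L} \kappa(w)^{\times}$ is $G$-cohomologically trivial because $T$ is unramified, and then takes minus parts to get a four-term sequence beginning with $\mu(L)^-$, from which both claims are read off; your version, comparing $T \subset T'$ directly and showing the kernel (you wrote ``cokernel'', a slip) of $\Cl_L^{T',-} \twoheadrightarrow \Cl_L^{T,-}$ is cohomologically trivial, is the same computation relativized, and is arguably the cleanest way to see (1). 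One small correction to the heuristic: the root-of-unity hypothesis on $T$ is \emph{not} what makes the local contributions $\prod_w \kappa(w)^\times$ cohomologically trivial — unramifiedness alone gives that. Its role is global: it forces the minus part of the $T$-congruent units to vanish, equivalently makes $\mu(L)^- \hookrightarrow \bigl(\prod_w \kappa(w)^\times\bigr)^-$ injective, so the relevant kernel is exactly the induced module and not some unknown quotient of it. With that fixed, your filtration argument gives (1), and localizing at $p$ (so ${}_p\mu(L)^- = 0$ when $\mu_p \not\subset L$) gives (2).
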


\begin{proof}
Let $\kappa(w)$ denote the residue field of $\cO_L$ at a finite prime $w$ of $L$.
Then we have an exact sequence
\[
\cO_L^{\times} \to \prod_{w \in T_L} \kappa(w)^{\times} \to \Cl_L^{T}  \to \Cl_L \to 0
\]
given by class field theory.
The module $\prod_{w \in T_L} \kappa(w)^{\times}$ is $G$-cohomologically trivial since  $T$ contains
no ramified primes.
Upon taking the minus parts, and using the second condition on $T$, we obtain an exact sequence
\[
0 \to \mu(L)^- \to \left(\prod_{w \in T_L} \kappa(w)^{\times}\right)^- \to \Cl_L^{T, -}  \to \Cl_L^- \to 0,
\]
where $\mu(L)$ denotes the group of roots of unity of $L$.
Both claims (1) and (2) follow from this sequence.
 \end{proof}

\begin{defn}\label{defn:rea}
Let now $\Gamma$ be a finite group and consider the group $G = \Gamma \times \{1, j\}$, 
where $j$ is an element of exact order $2$.
We have a natural isomorphism $\Z[G]^- \simeq \Z'[\Gamma]$.
Let us call a finite $\Z'[\Gamma]$-module $X$ ``realizable'' if there exists a Galois CM
extension $L/K$ with group $G$ and complex conjugation $j$ such that $X$ is equivalent to $\Cl_L^{T, -}$.
The subset of realizable classes is denoted by $\cZ^{\rea} \subset \Cmodsim$.
In the same way, we define $\cZ_p^{\rea} \subset \Cpmodsim$, considering $\Z_p[\Gamma]$-modules instead of $\Z'[\Gamma]$-modules.
\end{defn}

%%%%%%%%%%%%%%%%%%%%%%%%%%%%%%%%%%%%
\subsection{The exact sequence}
%%%%%%%%%%%%%%%%%%%%%%%%%%%%%%%%%%%%

We describe a result in \cite[\S 1.1.2]{AK2}  that is proved  for the case that $G$ is abelian.
Let $L/K$ be a finite Galois CM extension with Galois group $G$;
the element $j\in G$ will always denote complex conjugation. 

For comparison with \cite{AK2}, we consider a 
slightly more general situation:
The symbols $\Sigma$ and $\Sigma'$ always denote finite sets of places of $K$ satisfying the following.
\begin{itemize}
\item[(H1)]
$\Sigma \cap \Sigma' = \emptyset$.
\item[(H2)]
$\Sigma \supset S_\infty$, the set of all infinite places.
\item[(H3)]
The only odd-order root of unity in $L$ that 
is congruent to 1 at all $w \in \Sigma'_L$
is 1.
\item[(H4)]
$\Sigma$ contains all places that ramify wildly from $K$ to $L$,
and $\Sigma \cup \Sigma'$ contains the set $S_{\ram}(L/K)$ of all ramified places. 
\end{itemize}
Note that (H4) is certainly implied by
\begin{itemize}
\item[(H4')]
$\Sigma$ contains $S_{\ram}(L/K)$. 
\end{itemize}
We will apply this to $\Sigma = S_{\infty} \cup S_{\ram}(L/K)$ and $\Sigma' = T$.

\begin{defn}
For each finite prime $v$ of $K$, after choosing a prime $w$ of $L$ above $v$, let $D_w$ 
and $I_w$ denote the decomposition and inertia groups in $G$.
Let $\varphi_w \in D_w/I_w$ be the Frobenius automorphism, which is a generator of the cyclic group $D_w/I_w$.
We define a $\Z[G]$-module $A_v$ by
\[
A_v = \Z[G] \otimes_{\Z[D_w]} \Z[D_w/I_w]/(g_w),
\quad
g_w = 1-\varphi_w^{-1}+ \# I_w \in \Z[D_w/I_w].
\]
Note that the triple $(D_w, I_w, \varphi_w)$ is replaced by a $G$-conjugate if we change the choice of $w$ above $v$.
However, the left $\Z[G]$-module $A_v$ does not change up to isomorphism, which justifies the notation.
\end{defn}

It would be suggestive  to write this again as $A_v = \Z[G/I_w]/(g_w)$
as in earlier work on the abelian case.
However this would require further explanation, because $G/I_w$ need not
exist as a group, and  it is no longer obvious either what
modding out by $g_v$ means exactly.

\begin{thm} 
One may construct a finite $\Z[G]^-$-module $\Omega_\Sigma^{\Sigma'}$ and a short exact sequence 
\[
0 \to \Cl_L^{\Sigma',-} \to \Omega_\Sigma^{\Sigma'} \to
    \bigoplus_{v\in\Sigma_f} A_v^- \to 0,
\]
where $\Sigma_f = \Sigma \setminus S_{\infty}$.
Moreover, the module $\Omega_\Sigma^{\Sigma'}$ is $G$-cohomologically trivial.
\end{thm}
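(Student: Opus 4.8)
The plan is to reduce the statement to the abelian case in \cite{AK2} by a descent-type argument. The key point is that the module $\Omega_\Sigma^{\Sigma'}$ and the exact sequence are constructed in \cite{AK2} for abelian $G$; what we need is that all the ingredients make sense verbatim (i.e. do not rely on commutativity of $G$) once left modules are used consistently. Concretely, the construction in \cite{AK2} proceeds by assembling local data — the modules $A_v$ attached to each ramified place — into a global object sitting over the class group, via a mapping-cone or Tate-sequence type argument in the derived category of $\Z[G]^-$-modules. The modules $A_v$ have already been defined above for arbitrary $G$, and the only subtlety there — that the triple $(D_w, I_w, \varphi_w)$ is well-defined only up to $G$-conjugacy — has been dispatched by the remark that $A_v = \Z[G] \otimes_{\Z[D_w]} \Z[D_w/I_w]/(g_w)$ is independent of the choice of $w$ up to isomorphism.

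First I would recall the shape of the argument in \cite{AK2}: one starts from the Tate sequence (or the $T$-modified analogue) for $L/K$ relative to $\Sigma, \Sigma'$, which presents the class group together with the $\Sigma$-units and the idele-class data, and then one carries out a sequence of manipulations — passing to minus parts (which over $\Z'$ is exact and kills the archimedean contributions neatly), factoring out a projective or cohomologically trivial piece, and identifying the remaining local cokernels with the $A_v^-$. Every one of these steps is a statement about complexes of $\Z[G]$-modules (or $\Z[G]^-$-modules) and morphisms between them; none of them invokes that $\Z[G]$ is commutative. So the proof is essentially: \emph{cite \cite[\S 1.1.2]{AK2}, observe that the construction is insensitive to the abelian hypothesis once modules are kept on the left, and note the two places where a choice must be made $G$-equivariantly rather than canonically, namely the choice of $w \mid v$ (handled) and possibly a choice of section in the construction of the mapping cone (handled by choosing it after fixing the $w$'s)}.

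For the cohomological triviality of $\Omega_\Sigma^{\Sigma'}$: in the abelian case this follows because $\Omega_\Sigma^{\Sigma'}$ is built as an extension (or quotient) of cohomologically trivial modules — the $\Sigma$-units modulo a lattice, or the Tate-sequence terms — and cohomological triviality is detected on all subgroups, hence is a purely group-theoretic (not abelian-specific) notion. Over $\Z[G]^-$ a finite module is cohomologically trivial iff it has $\pd \le 1$, as recorded in \S\ref{sec:alg}, so I would phrase this last part as: $\Omega_\Sigma^{\Sigma'}$ sits in an exact sequence all of whose other terms are cohomologically trivial, and the two-out-of-three property for cohomological triviality in short exact sequences does not require $G$ abelian.

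The main obstacle — really the only one — is bookkeeping: verifying that the particular construction in \cite{AK2} (which may be written using, e.g., Fitting ideals, ring-theoretic dualities, or explicit presentations that look commutative) does not secretly use commutativity in an essential way. In particular one should check that wherever \cite{AK2} writes something like $\Z[G/I_w]$ or multiplies by $g_w$ "inside the group ring", this can be replaced by the induced-module formulation $\Z[G] \otimes_{\Z[D_w]} (-)$ with left-module conventions, exactly as flagged in the remark preceding the theorem. Once that translation is done consistently, nothing new is needed; the proof is a careful transcription rather than a new idea, which is why — as the authors themselves note — spelling out every detail would be "dull and un-enlightening", and it suffices to indicate the one or two spots requiring care.
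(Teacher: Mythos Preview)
Your proposal is correct and takes essentially the same approach as the paper: cite \cite{AK2} and verify that its construction goes through for nonabelian $G$ once left modules and the induced-module formulation of $A_v$ are used consistently. The paper is slightly more concrete, singling out the local module $W_w$ of \cite[Proposition 2.5]{AK2} and the exact sequence $0 \to W_w \to \Z[D_w] \to \Z[D_w/I_w]/(g_w) \to 0$ of \cite[Proposition 2.6]{AK2} (whose proof relies only on $D_w/I_w$ being abelian, not on $D_w$ itself) as the one spot requiring a careful look, but your overall strategy and identification of the bookkeeping issue match.
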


\begin{proof}
In the abelian case, the construction of $\Omega_\Sigma^{\Sigma'}$ 
and the exact sequence is \cite[Proposition 1.3]{AK2}, and the $G$-cohomological triviality is \cite[Proposition 1.4]{AK2}.
Both are proved in \cite[\S 2]{AK2}.

We claim that the proof is valid even if $G$ is nonabelian.
The first four subsections \cite[\S 2.1--\S 2.4]{AK2} do not even assume that $G$ is abelian. 
Let us at least discuss the ``local'' module $W_w$ of \cite[Proposition 2.5]{AK2};
it is given as
\[
W_w = \{(x,y)\in \Delta D_w \oplus \Z[D_w/I_w] : \bar x= (1-\varphi_w^{-1})y
 \in \Z[D_w/I_w] \}.
\]
The letter $\Delta$ denotes the augmentation kernel. 
The constructions in \cite[\S 2.5]{AK2} go through for nonabelian $G$ without
problems worth mentioning. As an example, let us discuss \cite[Proposition 2.6]{AK2}, 
which establishes an exact sequence
\[
0 \to W_w \overset{f_w}{\to} \Z[D_w] \to \Z[D_w/I_w]/(g_w) \to 0,
\]
starting from the above description of $W_w$. For the proof, \cite{AK2}
refers to \cite[Lemma 3.5]{AK}.    The proof of that lemma carries over. Just a few highlights:
The $\Z[D_w]$-homomorphism $f_w$ sends a pair $(x,y)\in W_w$ to $x+\nu_{I_w} y$, where
$\nu_{I_w}$ is the norm element attached to the subgroup $I_w$.
The proof uses a module $\Cok \alpha_w$, which is claimed to be free rank 
one over $\Z[D_w/I_w]$. This is still true if
$D_w$ is not assumed abelian. Indeed the only thing that counts is
that $D_w/I_w$ is abelian; it is not necessary to use that it is 
cyclic, as written in \cite{AK}.
Then, upon performing induction from $D_w$ to $G$, we obtain another 
short exact sequence
\[
0 \to \Z[G] \otimes_{\Z[D_w]} W_w \to \Z[G] \to A_v \to 0.
\]
\end{proof}

Note that the operator $\omega^1$ obviously commutes with direct sums.
This implies in particular (\cite[Theorem 2.6]{GK2}): 

\begin{thm}\label{thm:Cl_class}
When $G = \Gamma \times \{1, j\}$, we have
\[
\Cl_L^{T,-} \sim \bigoplus_{v\in\Sigma_f} \omega^1(A_v^- )
\]
as finite modules over $\Z[G]^- \simeq \Z'[\Gamma]$.
\end{thm}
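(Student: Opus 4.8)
The plan is to read the equivalence off directly from the short exact sequence supplied by the preceding theorem, using only the formal properties of the shift operator $\omega^1$ recorded in \S\ref{sec:alg}.

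First I would apply that theorem to $\Sigma = S_\infty \cup S_{\ram}(L/K)$ and $\Sigma' = T$. Conditions (H1)--(H4) are then satisfied: (H2) and (H4$'$) hold by the very choice of $\Sigma$, while (H1) and (H3) follow from the two conditions that $T$ is subjected to in \S\ref{sec:arith} (that $L/K$ is unramified at the primes of $T$, and that $L$ has no nontrivial root of unity congruent to $1$ at all $w \in T_L$). This produces a short exact sequence of finite $\Z[G]^-$-modules
\[
0 \to \Cl_L^{T,-} \to \Omega_\Sigma^{T} \to \bigoplus_{v\in\Sigma_f} A_v^- \to 0
\]
in which $\Omega_\Sigma^{T}$ is $G$-cohomologically trivial.

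Next I would invoke the observation made at the start of \S\ref{sec:alg}: over $R = \Z[G]^- \simeq \Z'[\Gamma]$ a finite module is $G$-cohomologically trivial if and only if it has $\pd \le 1$, i.e.\ lies in $\cP$. Hence the displayed surjection exhibits $\Cl_L^{T,-}$ as the kernel of a surjection onto $\bigoplus_{v\in\Sigma_f} A_v^-$ from a module in $\cP$, and since $\omega^1$ is well defined on $\Cmodsim$ independently of the chosen such surjection, we obtain
\[
\Cl_L^{T,-} \sim \omega^1\Bigl(\bigoplus_{v\in\Sigma_f} A_v^-\Bigr).
\]
Using the fact, observed just above, that $\omega^1$ commutes with direct sums (one may compute it on a direct sum by taking the direct sum of surjections from modules in $\cP$, which is closed under $\oplus$), the right-hand side equals $\bigoplus_{v\in\Sigma_f}\omega^1(A_v^-)$ in $\Cmodsim$; transporting along $\Z[G]^- \simeq \Z'[\Gamma]$ yields the assertion.

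I do not expect any genuine obstacle here: everything substantive has already been established, namely the nonabelian construction of the sequence above (carried out in the preceding theorem, where the only delicate point was that the relevant quotients $D_w/I_w$ remain abelian) and the fact that $\omega^1$ carries over verbatim to the nonabelian setting. Given these, the present deduction is purely formal and parallels the abelian argument of \cite[Theorem 2.6]{GK2}.
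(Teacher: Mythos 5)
Your argument is correct and matches the paper's own (very brief) justification exactly: both deduce the equivalence by observing that the preceding theorem's exact sequence exhibits $\Cl_L^{T,-}$ as the kernel of a surjection from the cohomologically trivial (hence $\pd\le 1$) module $\Omega_\Sigma^{T}$ onto $\bigoplus_{v\in\Sigma_f}A_v^-$, so that $\Cl_L^{T,-}\sim\omega^1\bigl(\bigoplus_{v}A_v^-\bigr)$, and then distribute $\omega^1$ over the direct sum. Your extra care in verifying (H1)--(H4) for the chosen $\Sigma,\Sigma'$ and in noting the well-definedness of $\omega^1$ is exactly the detail the paper leaves implicit.
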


Therefore, if we know $\bigoplus_{v\in\Sigma_f} A_v^-$ up to equivalence, then we know $\Cl_L^{T,-}$ up to equivalence. 
This is what we shall study next.

%%%%%%%%%%%%%%%%%%%%%%%%%%%%%%%%%%%%%%%%%%%%%%%%%%%%%%%%%%%%
\subsection{Admissibility}
%%%%%%%%%%%%%%%%%%%%%%%%%%%%%%%%%%%%%%%%%%%%%%%%%%%%%%%%%%%%

We consider an abstract version of $A_v$.
Let $G$ be a finite group.

\begin{defn}\label{defn:AID}
Let $D$ be a subgroup of $G$ and $I$ a normal subgroup of $D$ such that $D/I$ is a cyclic group.
For each generator $\varphi$ of $D/I$, we define a $\Z[G]$-module
\[
A_{I, \varphi} = \Z[G] \otimes_{\Z[D]} \Z[D/I](g_{\varphi}),
\quad
g_{\varphi} = 1 - \varphi^{-1} + \# I \in \Z[D/I].
\]
\end{defn}

We would like to define $A_{I, D}$ as $A_{I, \varphi}$ with
any choice of $\varphi$.
Of course this raises an immediate question.
Indeed it is fairly easy to check
on examples that the isomorphism class of 
$A_{I,\varphi}$ does depend in general on
the choice of the generator $\varphi$ of the cyclic group $D/I$.  Luckily, the
equivalence class of 
$A_{I,\varphi}$ is independent of that choice:

\begin{prop}
In Definition \ref{defn:AID}, the class of the module $A_{I, \varphi}$ 
modulo equivalence is independent of the choice of $\varphi$.
\end{prop}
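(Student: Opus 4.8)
The goal is to show that the equivalence class of $A_{I,\varphi}$ does not depend on the generator $\varphi$ of the cyclic group $D/I$. Since equivalence is compatible with the two operators at our disposal, and since $A_{I,\varphi}$ is built by induction from a local datum over $D$, the natural strategy is to reduce the whole question to the group $D/I$ itself. Concretely, the plan is to show that the two modules $A_{I,\varphi}$ and $A_{I,\varphi'}$ attached to two generators $\varphi,\varphi'$ become isomorphic --- not merely equivalent --- after applying the shift operator $\omega^1$ once, or that their difference is a module of $\pd\le 1$; then Proposition on $\omega^1$ (its being a well-defined automorphism of $\Cmodsim$) finishes the argument. Note that modding out by a module in $\cP$ is exactly what the equivalence relation allows in its top and bottom layers, so it suffices to exhibit $A_{I,\varphi}$ and $A_{I,\varphi'}$ as middle layers of two three-step filtrations of isomorphic modules.

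\textbf{Key steps.} First I would use the presentation
\[
0 \to \Z[G]\otimes_{\Z[D]} W \to \Z[G] \to A_{I,\varphi} \to 0
\]
(the abstract analogue of the exact sequence $0\to \Z[G]\otimes_{\Z[D_w]} W_w \to \Z[G]\to A_v\to 0$ established in the previous theorem), which identifies $\omega^1(A_{I,\varphi})$ with the class of the lattice $\Z[G]\otimes_{\Z[D]} W$, where
\[
W = W_{\varphi} = \{(x,y)\in \Delta D \oplus \Z[D/I] : \bar x = (1-\varphi^{-1})y \in \Z[D/I]\}.
\]
So it is enough to prove that $W_{\varphi}$ and $W_{\varphi'}$ are \emph{isomorphic} as $\Z[D]$-lattices (up to projective summands), since induction $\Z[G]\otimes_{\Z[D]}(-)$ preserves isomorphism and projectivity. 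Second, the core computation: given two generators $\varphi,\varphi'$ of the cyclic group $C := D/I$, there is an automorphism of $C$ sending $\varphi$ to $\varphi'$, hence a ring automorphism $\tau$ of $\Z[C]$. I would analyze how $\tau$ interacts with the element $1-\varphi^{-1}$ and with the augmentation ideal, and show that an appropriate ``twisted'' identity map $(x,y)\mapsto (x, \tau^{-1}y)$ or a small modification thereof identifies $W_{\varphi}$ with $W_{\varphi'}$; the subtlety is that $\tau$ does not fix $\Delta D$ pointwise in a way compatible with the compatibility condition $\bar x = (1-\varphi^{-1})y$, so one likely needs to precompose with a unit of $\Z[C]$ relating $1-\varphi^{-1}$ and $1-\varphi'^{-1}$ up to the ideal generated by the norm $\nu_I$ (which is killed in $\Z[D/I]$ anyway). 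Third, invoke Proposition \ref{prop:omegaO}: since $\Phi$ is injective and $\omega^1$ corresponds to $\Omega$, the equality of classes of $\omega^1(A_{I,\varphi})$ and $\omega^1(A_{I,\varphi'})$ in $\Cmodsim$ --- which follows from the lattice isomorphism above together with injectivity of $\Phi$ --- implies $A_{I,\varphi}\sim A_{I,\varphi'}$ because $\omega^1$ is an automorphism.

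\textbf{Main obstacle.} The delicate point is the second step: it is genuinely false that $A_{I,\varphi}\cong A_{I,\varphi'}$ in general (the proposition would be vacuous otherwise), so the isomorphism $W_{\varphi}\cong W_{\varphi'}$ cannot be the ``obvious'' one and must use that we only need isomorphism \emph{up to projective summands}. I expect the cleanest route is to observe that $1-\varphi^{-1}$ and $1-\varphi'^{-1}$ generate the same ideal in $\Z[D/I]$ modulo $(\nu_I)$ --- indeed both generate the augmentation ideal of $\Z[C]$ after inverting nothing, since $\varphi$ and $\varphi'$ are both generators --- and to package the discrepancy into a short exact sequence with projective (even free) outer terms, using the description of $\Cok$ as free of rank one over $\Z[D/I]$ exactly as in the proof of the preceding theorem. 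Alternatively, one can avoid $W$ entirely and argue directly that $A_{I,\varphi}$ and $A_{I,\varphi'}$ sit in a common three-step filtration: build a module $M$ with submodules realizing both as middle layers and free top/bottom layers, exploiting that $\# I$ is the same constant in $g_\varphi$ and $g_{\varphi'}$ and that the two elements $1-\varphi^{-1}$, $1-\varphi'^{-1}$ differ by an element of the augmentation-squared plus the norm. Either way, the heart of the matter is a purely local statement about $\Z[C]$-modules for $C$ cyclic, which is elementary but requires care in bookkeeping the projective corrections.
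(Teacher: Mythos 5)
Your overall skeleton matches the paper's: pass to a lattice representative via a shift operator, argue that its projective-equivalence class is independent of $\varphi$, and then invoke the fact that the shift is an automorphism of $\Cmodsim$ (together with injectivity of $\Phi$). However, the decisive step --- actually establishing that the lattice representatives for $\varphi$ and $\varphi'$ are projectively equivalent --- is never carried out; your second ``key step'' and the ``Main obstacle'' paragraph consist of hedged intentions (``I would analyze \dots'', ``I expect the cleanest route is \dots'', ``one likely needs \dots'') rather than a proof, and you yourself flag that the naive isomorphism between $W_\varphi$ and $W_{\varphi'}$ does not work.

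The paper closes this gap by working with the simpler fractional ideal
$\cL_{I,\varphi} = \bigl(\nu_I,\; 1 - \tfrac{\nu_I}{\# I}\varphi^{-1}\bigr)$
of $\Z[G]$, which represents $\Phi(\omega^{-1}(A_{I,\varphi}))$ (note the inverse shift, not $\omega^1$ as in your $W$-based reduction --- either direction is fine in principle, but $\cL_{I,\varphi}$ is a cleaner object). It then fits $\cL_{I,\varphi}$ into the short exact sequence $0 \to \Z[G/I] \xrightarrow{\nu_I} \cL_{I,\varphi} \to \Z[G]/(\nu_I) \to 0$ and computes that $\Ext^1_{\Z[G]}(\Z[G]/(\nu_I), \Z[G/I]) \simeq \Z[G/I]/(\# I)$ with the extension class equal to $\varphi^{-1} - 1$. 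Replacing $\varphi$ by another generator multiplies this class by a unit of $\Z[G/I]/(\# I)$, so the \emph{isomorphism} class of $\cL_{I,\varphi}$ is in fact unchanged (no projective corrections are even needed). This $\Ext$-class computation is exactly the missing ingredient in your proposal. Your observation that $1-\varphi^{-1}$ and $1-\varphi'^{-1}$ both generate the augmentation ideal of $\Z[D/I]$ gestures in a related direction, but it is not the same as the relevant unit statement in the $\Ext$ group, and the remark that $\nu_I$ ``is killed in $\Z[D/I]$'' is incorrect --- $\nu_I$ maps to $\# I \neq 0$ there, which is precisely why the extension group is $\Z[G/I]/(\# I)$ and why the argument is nontrivial. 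So: right strategic outline, but the central lemma is absent and the sketch offered in its place would not close the argument as written.
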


This proposition allows us to write $A_{I, D}$ for the equivalence class of $A_{I, \varphi}$ in the category of finite $\Z[G]$-modules.

\begin{proof}
Essentially this
important fact is already shown as implication (iii) $\Rightarrow$ (i) of \cite[Proposition 5.8]{GK2}.  We do not need the full content of that proposition here; on the
other hand, it assumed $G$ to be abelian. Therefore we prove
here what we need; this is no more than a slight elaboration upon \cite[Remark 5.11]{GK2}.

Let
\[
\cL_{I, \varphi} := \left(\nu_I, 1 - \frac{\nu_I}{\# I} \varphi^{-1} \right)
\]
be the fractional ideal of $\Z[G]$ generated by the two elements.
Then as in \cite[Proposition 5.10]{GK2}, this lattice $\cL_{I, \varphi}$ represents $\Phi(\omega^{-1}(A_{I, \varphi}))$ up to projective equivalence, where $\omega^{-1}$ denotes the inverse of $\omega^1$.
The proof is valid without essential changes; we only note that the element $\nu_I$ is central in $\Z[D]$ as $I$ is a normal subgroup of $D$.
Then it is enough to show that the projective equivalence class of $\cL_{I, \varphi}$ is independent of the choice of $\varphi$.

We can also verify the exact sequence
\[
0 \to \Z[G/I] \overset{\nu_I}{\to} \cL_{I, \varphi} \to \Z[G]/(\nu_I) \to 0
\]
in \cite[Proposition 5.12]{GK2}.
We then compute its extension class: We have
\[
\Ext^1_{\Z[G]}(\Z[G]/(\nu_I), \Z[G/I]) \simeq \Z[G/I]/(\# I)
\]
and the extension class is the class of $\varphi^{-1} - 1$.
Then, even if we change $\varphi$, the extension class remains unchanged up to a unit.
Therefore, the isomorphism class of $\cL_{I, \varphi}$ is unchanged.
\end{proof}

Let us introduce a notion of local realizability for a pair $(D, I)$.

\begin{defn}\label{defn:rea_loc}
Let $\ell$ be a prime number.
For a finite group $D$ and a subgroup $I \subset D$, we say that the pair $(D, I)$ 
is {\it realizable in finite extensions of $\Q_{\ell}$} if there are 
\begin{itemize}
\item[(i)]
a finite extension $K_v/\Q_{\ell}$ and a finite Galois extension $\Lp_w/K_v$, and
\item[(ii)]
an isomorphism $\psi: \Gal(\Lp_w/K_v) \simeq D$ of groups
\end{itemize}
such that $\psi(I(\Lp_w/K_v)) = I$, where $I(\Lp_w/K_v)$ denotes the inertia group in $\Gal(\Lp_w/K_v)$.
In this case, we say that the pair $(D, I)$ is realizable by $\Lp_w/K_v$.
\end{defn}

\begin{lem}\label{lem:real_allow}
If $(D, I)$ is realizable in finite extensions of $\Q_{\ell}$, then the following hold.
   \begin{itemize}
     \item[(1)]  The subgroup $I$ of $D$ is normal and $D/I$ is cyclic.
     \item[(2)]  The $\ell$-Sylow subgroup $I_{(\ell)}$ of $I$ is normal in $I$ and
           $I/I_{(\ell)}$ is cyclic.
   \end{itemize}
\end{lem}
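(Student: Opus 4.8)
The statement is purely local, so the plan is to recall the standard structure theory of Galois groups of local fields and read off both assertions. Suppose $(D,I)$ is realized by a finite Galois extension $\Lp_w/K_v$ with $K_v/\Q_\ell$ finite, and write $I = I(\Lp_w/K_v)$ for the inertia subgroup. The first step is to invoke the fundamental exact sequence
\[
1 \to I \to D \to \Gal(\kappa_w/\kappa_v) \to 1,
\]
where $\kappa_w/\kappa_v$ is the residual extension of finite fields. Since $I$ is by definition the kernel of the natural surjection onto the decomposition-quotient acting on the residue field, $I$ is normal in $D$; and $\Gal(\kappa_w/\kappa_v)$ is cyclic because it is the Galois group of an extension of finite fields (generated by Frobenius). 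This gives part (1).

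For part (2), the second step is to recall the canonical filtration of the inertia group into its wild and tame parts. The $\ell$-Sylow subgroup $I_{(\ell)}$ of $I$ is precisely the wild inertia subgroup $I_1$ (the first ramification group in lower numbering), which is well known to be a normal — indeed characteristic — subgroup of $I$, being the unique $\ell$-Sylow subgroup (the pro-$\ell$ part) of the profinite-by-finite group $I$; here one uses that in the local setting the higher ramification groups $I_i$ for $i \ge 1$ are $\ell$-groups and $I/I_1$ has order prime to $\ell$. The quotient $I/I_{(\ell)} = I/I_1$ is the tame quotient, which injects into $\kappa_w^\times$ (or rather into the multiplicative group of the residue field of the maximal tamely ramified subextension) via the tame character, hence is cyclic. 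That yields part (2).

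The two steps are essentially bookkeeping with classical results (e.g. Serre, \emph{Local Fields}, Ch.~IV), so there is no real obstacle; the only point requiring a word of care is that $I$ is a \emph{finite} group here (since $\Lp_w/K_v$ is a finite extension), so ``$\ell$-Sylow'' is literally the Sylow $\ell$-subgroup in the usual sense, and one should note that it is normal because it coincides with the wild inertia group $I_1$, not merely because a Sylow subgroup of an arbitrary finite group need be normal. I would state this identification explicitly and cite it, and then both normality and the cyclicity of the quotients follow immediately. No nonabelian input beyond the structure of $D$ itself is needed, so the lemma holds verbatim in the generality stated.
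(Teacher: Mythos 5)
Your proof is correct and takes essentially the same approach as the paper's: part (1) from the exact sequence $1 \to I \to D \to \Gal(\kappa_w/\kappa_v) \to 1$ with Frobenius-generated cyclic quotient, and part (2) from the identification of $I_{(\ell)}$ with the wild inertia subgroup, whose quotient is the cyclic tame part. The paper's proof is just a two-line summary of exactly the argument you spell out.
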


\begin{proof}
(1) This is well-known: $D/I$ is generated by the Frobenius.
(2) 
The subgroup $I_{(\ell)}$ of $I$ is the wild
ramification group; this is normal in $I$, again with cyclic factor group
(the ``tame part'' of ramification).
\end{proof}

Now we are finally ready to define admissible objects. 
Let $G = \Gamma \times \{1, j\}$ be as in Definition \ref{defn:rea}.
For subgroups $I \subset D \subset G$ such that $(D, I)$ is realizable in finite extensions of $\Q_{\ell}$ for some prime number $\ell$, by using Lemma \ref{lem:real_allow}(1), we have $A_{I, D}$, which is a finite $\Z[G]$-module 
well defined up to equivalence.

\begin{defn} \label{defn:adm}
We consider the monoid $\Cmodsim$ of finite $\Z'[\Gamma]$-modules up to equivalence.
We define $\cZ^{\adm} \subset \Cmodsim$ as the submonoid generated by $\omega^1(A_{I, D}^-)$, where $(D, I)$ runs through pairs of subgroups of $G$ that are realizable in finite extensions of $\Q_{\ell}$ for some prime number $\ell$.
An element of $\Cmodsim$ is said to be admissible if it is in $\cZ^{\adm}$.
In the same way, we define $\cZ_p^{\adm} \subset \Cpmodsim$, considering $\Z_p[\Gamma]$-modules instead of $\Z'[\Gamma]$-modules.
\end{defn}

Recall that for $L/K$ as in Definition \ref{defn:rea}, $\Cl_L^{T, -}$ is equivalent to $\bigoplus_{v \in \Sigma_f} \omega^1(A_{I_w, D_w}^-)$ (Theorem \ref{thm:Cl_class}) and each $(D_w, I_w)$ is clearly realizable by $L_w/K_v$.
This tells us:

\begin{prop}
We have $\cZ^{\rea} \subset \cZ^{\adm}$.
\end{prop}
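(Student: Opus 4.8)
The plan is to prove the inclusion $\cZ^{\rea} \subset \cZ^{\adm}$ by combining the structural results already assembled in this section, so that the proof is essentially bookkeeping. Suppose $X \in \cZ^{\rea}$; then by Definition \ref{defn:rea} there exists a finite Galois CM extension $L/K$ with group $G = \Gamma \times \{1,j\}$ such that $X \sim \Cl_L^{T,-}$ as a finite $\Z'[\Gamma]$-module, where $T$ satisfies the hypotheses imposed before Theorem \ref{thm:Cl_class} (and is allowed by Lemma \ref{lem:real_allow}-free considerations to be chosen so that those hypotheses hold; the equivalence class of $\Cl_L^{T,-}$ does not depend on this choice by Lemma 3.2(1)).

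First I would invoke Theorem \ref{thm:Cl_class}, which gives
\[
X \sim \Cl_L^{T,-} \sim \bigoplus_{v \in \Sigma_f} \omega^1\!\left(A_v^-\right)
\]
with $\Sigma = S_\infty \cup S_{\ram}(L/K)$, so that $\Sigma_f = S_{\ram}(L/K)$. For each $v \in \Sigma_f$, pick a prime $w$ of $L$ above $v$; then by definition $A_v = A_{I_w, D_w}$ for the decomposition and inertia groups $D_w \supset I_w$ in $G$. The completion $L_w/K_v$ is a finite Galois extension of the local field $K_v$, which is a finite extension of $\Q_\ell$ for $\ell$ the residue characteristic of $v$, and its Galois group is canonically $D_w$ with inertia subgroup $I_w$. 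Hence the pair $(D_w, I_w)$ is realizable in finite extensions of $\Q_\ell$ in the sense of Definition \ref{defn:rea_loc}, realized by $L_w/K_v$ itself.

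Therefore each summand $\omega^1(A_v^-) = \omega^1(A_{I_w,D_w}^-)$ is, by construction, one of the generators of $\cZ^{\adm}$ listed in Definition \ref{defn:adm}. Since $\cZ^{\adm}$ is a submonoid of $\Cmodsim$ and $\omega^1$ commutes with direct sums, the finite direct sum $\bigoplus_{v \in \Sigma_f} \omega^1(A_v^-)$ lies in $\cZ^{\adm}$, and since $\cZ^{\adm}$ is by definition a set of equivalence classes, the relation $X \sim \bigoplus_{v \in \Sigma_f} \omega^1(A_v^-)$ yields $X \in \cZ^{\adm}$. The same argument applies verbatim with $\Z_p[\Gamma]$ in place of $\Z'[\Gamma]$, giving $\cZ_p^{\rea} \subset \cZ_p^{\adm}$. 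The only point requiring a moment's care — and thus the main (mild) obstacle — is checking that the local extension $L_w/K_v$ genuinely witnesses Definition \ref{defn:rea_loc}, i.e.\ that $(D_w, I_w)$ as abstract groups-with-subgroup matches the decomposition/inertia data; but this is immediate from local class field theory and the definition of $A_v$, so no real difficulty arises.
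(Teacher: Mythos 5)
Your argument matches the paper's own reasoning exactly: invoke Theorem~\ref{thm:Cl_class} to write $\Cl_L^{T,-}$ up to equivalence as $\bigoplus_{v\in\Sigma_f}\omega^1(A_{I_w,D_w}^-)$, observe that each pair $(D_w,I_w)$ is realizable by the completion $L_w/K_v$, and conclude that the class lies in the submonoid $\cZ^{\adm}$ generated by such $\omega^1(A_{I,D}^-)$. Correct, and essentially identical in approach to the paper's (very brief) proof.
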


Let us provide sufficient conditions for the triviality of $A_{I, D}^-$:

\begin{lem}[{Cf.~\cite[Lemma 2.7]{GK2}}]\label{lem:vanish}
Let $(D, I)$ be a pair of subgroups of $G$ as in Definition \ref{defn:AID}.
Let $p$ be an odd prime number and set ${}_p A_{I, D} = \Z_p \otimes_{\Z} A_{I, D}$.
\begin{itemize}
\item[(1)]
If $p \nmid \# I$, then ${}_p A_{I, D} \sim 0$ as a finite $\Z_p[G]$-module.
\item[(2)]
If $j \in D$, then ${}_p A_{I, D}^- \sim 0$ as a finite $\Z_p[\Gamma]$-module.
\end{itemize}
\end{lem}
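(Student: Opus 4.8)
The plan is to prove both vanishing statements by passing, via Proposition~\ref{prop:fin_lat} and Proposition~\ref{prop:omegaO}, to the lattice side, or else directly from the definition of $\sim$. For part~(1), suppose $p \nmid \# I$. Then $\# I$ is a unit in $\Z_p$, so the element $g_{\varphi} = 1 - \varphi^{-1} + \# I \in \Z_p[D/I]$ becomes, after inverting no primes but merely noting its constant term, a unit modulo the augmentation ideal; more usefully, I would argue that ${}_p\Z_p[D/I]/(g_{\varphi})$ has projective dimension at most one over $\Z_p[D/I]$. Indeed $g_{\varphi}$ is a nonzerodivisor in $\Z_p[D/I]$ (its image in each $\overline{\Q}_p$-component is $1 - \zeta^{-1} + \# I$ for a root of unity $\zeta$, and since $\# I$ is a positive integer prime to $p$ this is never zero), so $0 \to \Z_p[D/I] \xrightarrow{g_{\varphi}} \Z_p[D/I] \to \Z_p[D/I]/(g_{\varphi}) \to 0$ is a length-one projective resolution. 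Inducing up along the (projective, hence flat and preserving $\pd\le 1$) functor $\Z_p[G]\otimes_{\Z_p[D]}(-)$ and using that $\Z_p[D/I]$ is a quotient of $\Z_p[D]$ on which $\pd$ behaves well, we get that ${}_p A_{I,\varphi}$ has $\pd \le 1$ over $\Z_p[G]$. A module of $\pd \le 1$ is equivalent to $0$ (take $M = M'' = X$, $M' = 0$ in the definition, and $N = N' = N'' = 0$), so ${}_p A_{I,D} \sim 0$, and the same holds for the minus part since taking minus parts preserves $\pd \le 1$ over $\Z_p[G]^-$.

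For part~(2), suppose $j \in D$. The point is that $j$ acts trivially on $D/I$: since $D/I$ is abelian and $j$ has order dividing $2$, and more to the point $j$ generates a central (order $\le 2$) factor of $G$, the image $\bar{j} \in D/I$ is central, and I claim $\bar j = 1$. Here I would use that $G = \Gamma \times \{1,j\}$, so $j$ is central in $G$; if $\bar j \ne 1$ in $D/I$ then $D/I$ has even order and $\bar j$ is the unique element of order $2$ in the cyclic group $D/I$. But then $\varphi^{-1} \equiv \bar j \pmod{(D/I)^2}$-type considerations show $g_{\varphi}$ still behaves well; actually the cleaner route is: on $A_{I,\varphi}$, the element $j$ acts through its image in $\Z_p[D/I]/(g_\varphi)$, and one computes that $1 + \bar j$ either kills the module or makes it $\pd\le1$. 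The honest argument I would give: the functor $X \mapsto X^-$ kills any module on which $j$ acts trivially, and $j$ acts on $A_{I,\varphi}$ compatibly with its action on $\Z_p[D/I]/(g_\varphi)$ via $\bar j$; when $\bar j = 1$ this action is trivial so $A_{I,D}^- = 0$ outright. When $\bar j \ne 1$, i.e. $\# I$ is such that $j \in I$ — wait, $j \in D \setminus I$ with $\bar j \ne 1$ — one checks $\nu_I$ and the ideal $\cL_{I,\varphi}$ from the previous proof, whose minus part becomes a projective $\Z_p[G]^-$-lattice, giving $\omega^{-1}(A_{I,D}^-) \sim 0$ hence $A_{I,D}^- \sim 0$. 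So the structure of the proof is: split into the cases $\bar j = 1$ and $\bar j \ne 1$ according to whether $j \in I$ or $j \in D \setminus I$ has nontrivial image, handle the first by direct vanishing of the minus part and the second by the fractional-ideal computation from the preceding proposition's proof.

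The main obstacle I anticipate is the case $j \in D \setminus I$ with $\bar j \ne 1$ in $D/I$: here the minus part of $A_{I,\varphi}$ is genuinely nonzero and one must instead show it is cohomologically trivial, i.e. has $\pd \le 1$ over $\Z_p[\Gamma] \cong \Z_p[G]^-$. I would reuse the exact sequence
\[
0 \to \Z_p[G/I] \xrightarrow{\nu_I} \cL_{I,\varphi} \to \Z_p[G]/(\nu_I) \to 0
\]
from the proof of the previous proposition: taking minus parts, and observing that since $\bar j$ is the order-$2$ element of $D/I$ the idempotent $(1-j)/2$ interacts with $\nu_I$ to make $\Z_p[G/I]^-$ and $(\Z_p[G]/(\nu_I))^-$ both projective over $\Z_p[G]^-$ (because $j$ acts on $G/I$ with no fixed points in the relevant sense, so $\Z_p[G/I]^-$ is free), we conclude $\cL_{I,\varphi}^-$ is projective, hence represents $0$ in $\Lat^{\pe}_{\Z_p[G]^-}$. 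By the commutative diagram of Proposition~\ref{prop:omegaO} this forces $\omega^{-1}(A_{I,D}^-) \sim 0$ and therefore $A_{I,D}^- \sim 0$ as claimed. Verifying the freeness/projectivity of these minus parts over the noncommutative ring $\Z_p[G]^- = \Z_p[\Gamma]$ is the delicate bookkeeping step, but it is a finite check following the pattern already established in \cite[Lemma 2.7]{GK2} for the abelian case.
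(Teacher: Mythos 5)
For part (1), your high-level idea is right, but you skip exactly the step where the hypothesis $p \nmid \# I$ is actually used. Your short exact sequence $0 \to \Z_p[D/I] \xrightarrow{g_{\varphi}} \Z_p[D/I] \to \Z_p[D/I]/(g_{\varphi}) \to 0$ is a resolution over $\Z_p[D/I]$, not over $\Z_p[D]$. To transport $\pd \le 1$ down to $\Z_p[D]$ (before inducing up to $\Z_p[G]$), you must know that $\Z_p[D/I]$ is a \emph{projective} $\Z_p[D]$-module — and this is precisely where $p \nmid \#I$ enters, since $\Z_p[D/I] \cong \Z_p[D] \otimes_{\Z_p[I]} \Z_p$ and $\Z_p$ is projective over $\Z_p[I]$ iff $\#I$ is a unit in $\Z_p$. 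Your phrase ``$\Z_p[D/I]$ is a quotient of $\Z_p[D]$ on which $\pd$ behaves well'' doesn't supply this: being a quotient is irrelevant, being projective is the point. Once stated, the rest of (1) goes through as you say, and this matches the paper's argument.

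For part (2), your argument has a genuine gap and takes a much harder route than necessary. You first claim $\bar{j}=1$ in $D/I$ and then correctly notice that this is false in general (for instance $\Gamma$ trivial, $D=G$, $I=\{1\}$ gives $\bar{j}\ne 1$). Your case $\bar{j}=1$ (i.e.\ $j\in I$) is fine, but your treatment of the case $j\in D\setminus I$ with $\bar j\ne 1$ never reaches a verifiable conclusion: you appeal to ``the fractional-ideal computation'' and ``a finite check,'' without carrying out either. The paper avoids the case split entirely. After reducing to $p \mid \# I$ by part (1), one computes directly, writing $\bar j = \varphi^k$ for some $k$:
\[
\bigl(\Z_p[D/I]/(g_\varphi,p)\bigr)^- = \Z_p[D/I]/(1-\varphi^{-1}+\#I,\,p,\,j+1)
 = \Z_p[D/I]/(1-\varphi^{-1},\,p,\,\varphi^k+1) = 0,
\]
since modulo $(1-\varphi^{-1}, p)$ one has $\varphi^k+1 = 2$, a unit in $\Z_p$ because $p$ is odd. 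Nakayama then gives $(\Z_p[D/I]/(g_\varphi))^- = 0$, and inducing up (which commutes with minus parts since $j$ is central) gives ${}_p A_{I,D}^- = 0$. Note this shows the module is literally zero, which is stronger and cleaner than showing $\pd \le 1$ as you were attempting; the heavy lattice machinery (the ideal $\cL_{I,\varphi}$, $\Omega$, $\omega^{-1}$) is not needed here at all.
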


\begin{proof}
(1)
Suppose $p \nmid \# I$.
Then $\Z_p[D/I]$ is a projective $\Z_p[D]$-module.
Since $g_{\varphi} = 1 - \varphi^{-1} + \# I$ is a non-zero-divisor of $\Z_p[D/I]$, it follows 
that $\Z_p[D/I]/(g_{\varphi})$ has $\pd \leq 1$ over $\Z_p[D]$.
Since $\Z_p[G]$ is  free over $\Z_p[D]$, we see that ${}_p A_{I, \varphi}$ has $\pd \leq 1$ over $\Z_p[G]$, as claimed.
 
(2)
By (1) we may assume that $p \mid \# I$.
Suppose moreover $j \in D$.
Take an integer $k \in \Z$ such that the class of $j$ is $\varphi^k$ in $D/I$.
Then we have
\begin{align*}
\left( \Z_p[D/I]/(g_{\varphi}, p) \right)^-
& = \Z_p[D/I]/(1 - \varphi^{-1} + \# I, p, j + 1)\\
& = \Z_p[D/I]/(1 - \varphi^{-1}, p, \varphi^k + 1)
= 0.
\end{align*}
Then Nakayama's lemma shows $\left( \Z_p[D/I]/(g_{\varphi}) \right)^- = 0$, so also ${}_p A_{I, \varphi}^- = 0$.
\end{proof}

%%%%%%%%%%%%%%%%%%%%%%%%%%%%%%%%%%%%%%%%%%%%%%%%%%%%%%%
\section{Admissible classes for metacyclic groups}\label{sec:metaab}
%%%%%%%%%%%%%%%%%%%%%%%%%%%%%%%%%%%%%%%%%%%%%%%%%%%%%%%

Let $p$ be a fixed odd prime number.
In this section, we assume that 
$G = \Gamma \times \{1,j\} $,
where $j$ will act as complex conjugation and $\Gamma$ is the metacyclic group
given as a semidirect product
$$  \Gamma = \langle \sigma, \tau : \sigma^p = 1 = \tau^r, 
         \tau\sigma\tau^{-1} = \sigma^s\rangle = C_p \rtimes C_r, $$
where the class of $s$ in $(\Z/p\Z)^{\times}$ has exact order $r$.
Consequently, $r$ is a divisor of $p-1$.

Let $\Cpmodsim$ be the monoid of finite $\Z_p[\Gamma]$-modules up to equivalence.
We have the submonoid $\cZ_p^{\adm} \subset \Cpmodsim$ of admissible classes as in Definition \ref{defn:adm}.
The aim of this section is to prove Theorem \ref{thm:11} on the monoid structures of $\cZ_p^{\adm}$ and $\Cpmodsim$.
In \S \ref{ss:classification}, we review the classification of $\Z_p[\Gamma]$-lattices.
Then in \S \ref{ss:Cpmodsim}, we determine the structure of $\Cpmodsim$, establishing Theorem \ref{thm:11}(1).
After a preliminary computation of lattices in \S \ref{ss:comp_latt}, we will determine the structure of $\cZ_p^{\adm}$ in \S \ref{ss:adm_latt}, establishing Theorem \ref{thm:11}(2).

%%%%%%%%%%%%%%%%%%%%%%%%%%%%%%%%%%%%%%%%%%%%%%%%%%%%%%%
\subsection{Classification theorem}\label{ss:classification}
%%%%%%%%%%%%%%%%%%%%%%%%%%%%%%%%%%%%%%%%%%%%%%%%%%%%%%%

As mentioned in Remark \ref{rem:Latpe_free},
 the structure of $\Lat_p^{\pe} = \Lat^{\pe}_{\Z_p[\Gamma]}$ is determined by 
the indecomposable lattices, whose classification is already known for our $\Gamma$.

A brief word on the history of the classification theorem: 
The first case to be covered was arbitrary odd $p$ and $r=2$, so
$\Gamm$ is dihedral, see \cite{Lee}. Just a little later, the case of
general $r$ was done in \cite{Pu}.  Our source of reference is
the very clear and concise
presentation in \cite[\S34E]{CR}.
Two quick remarks: That source
uses the letter $q$ instead of our $r$, and it makes the assumption
that $q$ is also prime, but this is never used. 

We now state the classification of lattices over $\Z_p[\Gamm]$.

\begin{thm}\label{thm:class_latt}
There are exactly $3r$ indecomposable $\Z_p[\Gamma]$-lattices modulo isomorphism.
We may call them
\[
L_1^i, L_2^i, L_3^i,
\quad
(i \in \Z/r\Z)
\]
so that the following hold.
\begin{itemize}
\item
We have $\rank_{\Z_p}(L_1^i) = 1$, $\rank_{\Z_p}(L_2^i) = p-1$, and $\rank_{\Z_p}(L_3^i) = p$.
\item
We have an exact sequence
\[
0 \to L_2^{i+1} \to L_3^i \to L_1^i \to 0.
\]
\item
We have an isomorphism $\bigoplus_{i \in \Z/r\Z} L_3^i \simeq \Z_p[\Gamma]$, so every $L_3^i$ is projective.
\item
The lattices $L_1^i$ and $L_2^i$ are not projective.
\end{itemize}
\end{thm}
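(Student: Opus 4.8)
The plan is to derive the classification of $\Z_p[\Gamma]$-lattices from the known structure theory, following the presentation in \cite[\S34E]{CR}, and then reconcile the notation with the four properties listed in the theorem. The underlying algebraic input is that $\Z_p[\Gamma]$, for $\Gamma = C_p \rtimes C_r$ with $r \mid p-1$, sits in a pullback (conductor) square relating it to the two ``ends'' of the ring: the maximal order piece coming from the quotient $\Z_p[\Gamma/C_p] = \Z_p[C_r]$ (which, since $r \mid p-1$ and $p$ is a unit dividing nothing problematic, splits as a product of unramified extensions of $\Z_p$), and the piece $\Z_p[\zeta_p][C_r/\langle\text{stabilizer}\rangle]$-type order living over the augmentation ideal of $C_p$. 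Concretely, $\Z_p[\Gamma]$ is commutative-over-its-center enough that one gets a Milnor square, and lattices are glued from lattices over the two factors along a semisimple quotient.

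First I would recall this conductor square explicitly and invoke the general fiber-product description of lattices (as in \cite[\S34]{CR}): every $\Z_p[\Gamma]$-lattice is determined, up to isomorphism, by a lattice over each of the two corner rings together with gluing data over the shared residue quotient. Since $\Z_p[C_r]$ is a product of $r$ unramified extensions of $\Z_p$ (indexed by $i \in \Z/r\Z$, corresponding to the characters of $C_r$), and the other corner is a maximal order in a product of $r$ copies of $\Q_p(\zeta_p)$ acted on appropriately, each corner is hereditary and hence has only trivial indecomposable lattices — one per block. The nontrivial indecomposables therefore come from the three ways to glue (or not glue) across a given block $i$: the rank-one lattice $L_1^i$ supported only on the $\Z_p[C_r]$-side, the rank-$(p-1)$ lattice $L_2^i$ supported only on the $\Z_p(\zeta_p)$-side, and the rank-$p$ lattice $L_3^i$ which is the full glued object; this is exactly the trichotomy producing $3r$ indecomposables. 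I would cite \cite{Lee} for $r=2$, \cite{Pu} in general, and \cite{CR} for the clean account, noting (as the excerpt already does) that the primality of $r$ assumed in those sources is never used.

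It then remains to check the four bulleted properties against this description. The ranks are immediate from which corner(s) each lattice is supported on. The exact sequence $0 \to L_2^{i+1} \to L_3^i \to L_1^i \to 0$ expresses $L_3^i$ as an extension realizing the gluing: the sub is the $C_p$-augmentation-ideal part and the quotient is the $C_r$-character part, where the shift $i \mapsto i+1$ records how the action of $\tau$ by $\sigma \mapsto \sigma^s$ permutes the characters of $C_p$ relative to those of $C_r$ — this is the one place where the specific cocycle $s$ of exact order $r$ enters, and pinning down the index shift correctly (as $i+1$ rather than $i$ or $i-1$, which is only a matter of labelling convention) is the main bookkeeping obstacle. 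The isomorphism $\bigoplus_i L_3^i \simeq \Z_p[\Gamma]$ follows because the regular representation is the ``largest'' glued lattice in each block and these exhaust all blocks; since a direct sum is projective iff each summand is, and $\Z_p[\Gamma]$ is projective over itself, every $L_3^i$ is projective. Finally, $L_1^i$ and $L_2^i$ are \emph{not} projective: each has rank strictly smaller than $p$, while by the block decomposition any projective indecomposable must be one of the $L_3^i$ (of rank $p$); alternatively, $L_1^i$ has $p \mid \#\Gamma$ acting with a kernel that obstructs cohomological triviality. I expect the genuinely delicate part to be not the existence of the classification — which is classical — but writing down the conductor square and the gluing data precisely enough that the exact sequence with its index shift is transparently correct; everything else is rank-counting and the observation that the two corner rings are hereditary.
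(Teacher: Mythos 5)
The paper does not reprove the classification: it cites \cite{Lee}, \cite{Pu}, and \cite[\S34E]{CR} for the fact that there are exactly $3r$ indecomposables and then \emph{constructs} the three families concretely --- $L_1^i=\Z_p(i)$ as Tate twists, $L_2^i=(1-\zeta)^i\Z_p[\zeta]$ as fractional ideals with the semilinear $\tau$-action, and $L_3^i=\Z_p[\Gamma]e_i$ as left ideals cut out by the idempotents of $\Z_p[\lrang{\tau}]$ --- deriving the short exact sequence from the explicit inclusion $(1-\sigma)\Z_p[\lrang{\sigma}]e_0\subset\Z_p[\lrang{\sigma}]e_0$ and twisting. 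Your plan goes further and tries to reprove the count via a Milnor/conductor square, which is indeed the method of \cite[\S34E]{CR}, so the two proofs are not so much different routes as different depths of citation.

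However, your description of the square contains a genuine error that would propagate. The second corner is not ``a maximal order in a product of $r$ copies of $\Q_p(\zeta)$.'' The quotient of $\Z_p[\Gamma]$ killing the norm element $\nu_\sigma$ is the twisted group ring $\Z_p[\zeta]\ast C_r$, a single order inside the simple $\Q_p$-algebra $\Q_p(\zeta)\ast C_r\simeq M_r(F)$ with $F=\Q_p(\zeta)^{C_r}$, and this order is hereditary but \emph{not} maximal: because $\Z_p[\zeta]/\cO_F$ is totally ramified of degree $r$, it has exactly $r$ pairwise non-isomorphic indecomposable lattices, namely the twists $(1-\zeta)^i\Z_p[\zeta]$, all of which sit inside the \emph{same} irreducible rational representation $V$ of dimension $p-1$. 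In your picture (maximal order in $\Q_p(\zeta)^r$) the $r$ indecomposables would lie in $r$ distinct simple blocks, so the rationalizations $\Q_p\otimes L_2^i$ would be mutually non-isomorphic; but that is false and incompatible with the later proof of Theorem~\ref{thm:11}(1), which hinges precisely on the fact that all $V_2^i=\Q_p\otimes L_2^i$ coincide while the $V_1^i$ do not. The count $3r$ and the four bulleted properties do come out of the correct conductor square (two hereditary corners with $r$ indecomposables each plus $r$ nontrivial gluings over $\F_p[C_r]\simeq\F_p^r$), and your treatment of projectivity via Krull--Schmidt and of the index shift via the $\tau$-action is sound in outline; but the corner ring must be corrected before the argument can be made precise.
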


Let us explain the construction of the lattices $L_k^i$ for $k = 1, 2, 3$.

Note that, since the abelianization of $\Gamma$ is $\Gamma/\lrang{\sigma} \simeq C_r$, there are $r$ group homomorphisms $\Gamma \to \Z_p^{\times}$.
More concretely, there is a unique root of unity $\rho \in \Z_p^{\times}$ that is congruent to $s$ modulo $p$, and we have a group homomorphism $\omega: \Gamma \to \Z_p^{\times}$ that sends $\tau$ to $\rho$.
Then the set of group homomorphisms $\Gamma \to \Z_p^{\times}$ is $\{\omega^i \mid i \in \Z/r\Z\}$.
We call $\omega$ the Teichm\"uller character.

$k=1$: Here $L_1^i$ is $\Z_p$, with trivial action of $\sigma$, and
$\tau$ acting via $\omega^i$. So all the $L_1^i$ are lattices already over
the quotient group $\Gamm/\langle \sigma \rangle$; indeed all indecomposable
$\Z_p$-lattices over this cyclic group of order $r$ are of rank one,
and given by one of the $L_1^i$. Another suggestive notation for
$L_1^i$ would be $\Z_p(i)$ (Tate twist). 

$k=3$: 
Recall that $\lrang{\tau}$ is a cyclic group of order $r$ and its characters are given by $\omega^i$ for $i \in \Z/r\Z$.
We then use the idempotents
\[
e_i = \frac{1}{r} \sum_{j=0}^{r-1} \omega^i(\tau^j) \tau^{-j} \in \Z_p[\lrang{\tau}] \subset \Z_p[\Gamma].
\]
The lattice $L_3^i$ is then defined to be the left ideal $\Z_p[\Gamma] e_i =
\Zp[\lrang{\sigma}]e_i$ in $\Zp[\Gamm]$.
Then $L_3^i$ is free of rank one over $\Z_p[\lrang{\sigma}]$, so it is indecomposable already over $\Z_p[\lrang{\sigma}]$, and the $\Z_p$-rank is $p$.

$k=2$: 
Let $\zeta$ be a primitive $p$-th root of unity.
We embed $\lrang{\tau} = C_r$ into the Galois group $\Gal(\Q_p(\zeta)/\Q_p) \simeq (\Z/p\Z)^{\times}$ by sending $\tau$ to $s$ modulo $p$.
Then $\Q_p(\zeta)$ can be regarded as a $\Q_p[\Gamma]$-module, with $\sigma$ acting
as multiplication by $\zeta$, and $\tau$ via Galois action.
For each $i \in \Z$, we have a $\Z_p[\Gamma]$-lattice $(1-\zeta)^i \Z_p[\zeta] \subset \Q_p(\zeta)$, and 
we call it $L_2^i$. 
Then $L_2^i$ is free of rank one over $\Z_p[\zeta]$, so it is indecomposable already over $\Z_p[\lrang{\sigma}]$, and the $\Z_p$-rank is $p-1$.

The isomorphism class of $L_2^i$  depends only on $i$ modulo $r$.
Some further explanations are perhaps helpful.
From the theory of Gauss sums one can deduce that there is an
element $g_i\in \Z_p[\zeta]$ with $(1-\zeta)$-adic valuation $i$ and
the property $\tau(g_i)= \omega^i(\tau)\cdot g_i$. Thus $L_2^i
= g_i \Z_p[\zeta]$, and we can interpret $L_2^i$ as $\Z_p(i) \otimes_{\Zp} \Z_p[\zeta]
=\Z_p[\zeta](i)$. 

There is the following link between the three series:
The lattice $L_3^0 = \Zp[\lrang{\sigma}] e_0$ contains the sublattice 
$(1-\sigma)\Zp[\lrang{\sigma}]e_0$, which is isomorphic
to $L_2^1=(1-\zeta)\Z_p[\zeta]$, by sending $\sigma$ to $\zeta$ and $e_0$ to 1.
So one gets a short exact sequence $0 \to L_2^1 \to L_3^0 \to \Zp(0)
\to 0$, which does not split, and by twisting for all $i$:
$$ 0 \to L_2^{i+1} \to L_3^i \to L_1^i= \Zp(i) \to 0. $$

This finishes the explanation of the construction of the indecomposable lattices.

%%%%%%%%%%%%%%%%%%%%%%%%%%%%%%%%%%%%%%%%%%%%%%%%%%%%%%%
\subsection{The structure of $\Cpmodsim$}\label{ss:Cpmodsim}
%%%%%%%%%%%%%%%%%%%%%%%%%%%%%%%%%%%%%%%%%%%%%%%%%%%%%%%

Now we are ready to prove Theorem \ref{thm:11}(1).
Recall that we have an injective homomorphism $\Phi: \Cpmodsim \hookrightarrow \Lat^{\pe}_p$.

\begin{thm}
The following hold.
\begin{itemize}
\item[(1)]
The monoid $\Lat^{\pe}_p$ is free of rank $2r$.
\item[(2)]
The monoid $\Cpmodsim$ is isomorphic to 
\[
\left\{ (a_1, \dots, a_r, b_1, \dots, b_r) \in \bN^{2r} \, \middle| \, \sum_{i=1}^r a_i = \sum_{i=1}^r b_i \right\}.
\]
In particular, the monoid $\Cpmodsim$ is not free unless $r = 1$, and it
 generates a free abelian group of rank $2r-1$.
\end{itemize}
 \end{thm}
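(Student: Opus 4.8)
The strategy is to pass through the injection $\Phi: \Cpmodsim \hookrightarrow \Lat^{\pe}_p$ of Proposition \ref{prop:fin_lat} and the Krull--Schmidt description of $\Lat^{\pe}_p$ from Remark \ref{rem:Latpe_free}. First I would establish (1): by Remark \ref{rem:Latpe_free}, $\Lat^{\pe}_p$ is the free commutative monoid on the set of non-projective indecomposable $\Z_p[\Gamma]$-lattices up to isomorphism. By the classification Theorem \ref{thm:class_latt}, the indecomposables are $L_1^i, L_2^i, L_3^i$ for $i \in \Z/r\Z$, the $L_3^i$ are projective, and the $L_1^i$, $L_2^i$ are not; so there are exactly $2r$ non-projective indecomposables, giving $\Lat^{\pe}_p \cong \bN^{2r}$ freely on the classes $[L_1^i], [L_2^i]$.

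For (2), the key point is to identify the image of $\Phi$ inside $\Lat^{\pe}_p \cong \bN^{2r}$. By the last clause of Proposition \ref{prop:fin_lat}, a class $[L]$ lies in the image of $\Phi$ exactly when $\Q_p \otimes_{\Z_p} L \simeq \Q_p \otimes_{\Z_p} F$ for some projective $\Z_p[\Gamma]$-module $F$; equivalently, writing $L \simeq \bigoplus_i (L_1^i)^{a_i} \oplus \bigoplus_i (L_2^i)^{b_i} \oplus (\text{projective})$, the rational character of $L$ must agree with that of a projective module, i.e.\ $L$ must lie in the same class as a projective in the rational representation ring. The rational representation theory of $\Gamma = C_p \rtimes C_r$ over $\Q_p$ is transparent: there are the $r$ one-dimensional characters $\omega^i$ (each realized by $L_1^i$) and a single $(p-1)$-dimensional irreducible $V$ with $\Q_p \otimes L_2^i \simeq V$ for every $i$, while $\Q_p \otimes \Z_p[\Gamma] \simeq \bigoplus_i \omega^i \oplus V^{\oplus ?}$ — one computes that each $L_3^i$ contributes $\omega^i \oplus V$ rationally from the exact sequence $0 \to L_2^{i+1} \to L_3^i \to L_1^i \to 0$. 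Hence a projective module has rational character $\bigoplus_i (\omega^i)^{c_i} \oplus V^{\oplus(\sum_i c_i)}$, and matching this against $\bigoplus_i (\omega^i)^{a_i} \oplus V^{\oplus(\sum_i b_i)}$ forces $a_i = c_i$ and $\sum b_i = \sum c_i = \sum a_i$. Thus the image of $\Phi$ is precisely the submonoid $\{(a_1,\dots,a_r,b_1,\dots,b_r) : \sum a_i = \sum b_i\}$ of $\bN^{2r}$, and since $\Phi$ is an injective monoid homomorphism it is an isomorphism onto this submonoid. The final assertions are then elementary: this submonoid is not free for $r > 1$ because e.g.\ $[L_1^1] + [L_2^2]$ and $[L_1^2] + [L_2^1]$ are both minimal yet their sum equals $[L_1^1]+[L_1^2]+[L_2^1]+[L_2^2]$, which also decomposes differently, so there is no free basis; and it generates the subgroup of $\Z^{2r}$ cut out by the single equation $\sum a_i = \sum b_i$, which has rank $2r-1$.

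The one step requiring genuine care is verifying that $\Phi$ has image exactly the stated submonoid, which amounts to the claim that $\Q_p \otimes_{\Z_p}(-)$ separates lattices in the way described — in particular that $\Q_p \otimes L_2^i$ is independent of $i$ (it is, being $\simeq V$ for all $i$, as noted in the construction of $L_2^i$) and that no further rational coincidences occur among the $L_1^i$. This is pure rational representation theory of $\Gamma$ and is routine once set up, but it is where the constraint $\sum a_i = \sum b_i$ actually comes from, so I would spell it out. The rest — the count $3r \to 2r$ non-projectives, freeness of $\Lat^{\pe}_p$, and the group-theoretic rank computation — is bookkeeping on top of Theorem \ref{thm:class_latt} and Remark \ref{rem:Latpe_free}.
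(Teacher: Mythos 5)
Your proof is correct and takes essentially the same route as the paper: pass through $\Phi$, use Krull--Schmidt and the classification theorem for (1), and characterize $\Phi$'s image via rational representations (noting $\Q_p\otimes L_2^i$ is a single irreducible $V$ for all $i$) to obtain the constraint $\sum a_i=\sum b_i$ for (2). The only small divergence is in the non-freeness argument: you exhibit an explicit element with two distinct atomic decompositions (e.g.\ $[L_1^1]+[L_2^2]$ plus $[L_1^2]+[L_2^1]$ versus $[L_1^1]+[L_2^1]$ plus $[L_1^2]+[L_2^2]$), whereas the paper compares the count of $r^2$ atoms against the group rank $2r-1$; both are valid, and yours is perhaps a hair more concrete.
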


\begin{proof}  
(1) follows immediately from the classification theorem (Theorem \ref{thm:class_latt}).
Indeed, the basis of $\Lat^{\pe}_p$ is given by the classes of $L_1^i, L_2^i$ for $1 \leq i \leq r$.

(2)
By (1), an element of $\Lat^{\pe}_p$ can be written as
\[
L = \bigoplus_{i=1}^{r} (L_1^i)^{a_i} \oplus \bigoplus_{i=1}^{r} (L_2^{i+1})^{b_i}
\]
for $(a_1, \dots, a_r, b_1, \dots, b_r) \in \N^{2r}$.
Let us show that $L$ is in $\Phi(\Cpmodsim)$ if and only if $\sum_{i=1}^r a_i = \sum_{i=1}^r b_i$.

For $k=1,2,3$ and $i=1,\ldots,r$  let $V_k^i=\mathbb Q_p \otimes_{\Zp} L_k^i$.
The representations $V_1^i$ are exactly the degree-one representations of 
$\Gamma$, and $V_3^i \cong  V_1^i \oplus V_2^{i+1}$ by the short exact sequence in Theorem \ref{thm:class_latt}. Now in contrast with the $L_2^i$, the $V_2^i$ do
not differ essentially from each other. To the contrary: as $L_2^{i+1}$ is
by construction a sublattice of index $p$ in $L_2^i$, all the $V_2^i$ simply agree,
and give the unique representation $V$ of degree $p -1$ of $\Gamma$. 

The projective lattices are exactly those that can be written as a direct sum
of copies of some $L_3^i$'s. 
Therefore, by Proposition \ref{prop:fin_lat}, the condition for $L$ to be in the image of $\Phi$ is equivalent to 
\[
\Q_p \otimes L \simeq \bigoplus_{i=1}^r (V_3^i)^{d_i}
\]
for some $d_i$.
This is rewritten as
\[
\bigoplus_{i=1}^{r} (V_1^i)^{a_i} \oplus \bigoplus_{i=1}^{r} (V_2^{i+1})^{b_i} 
\simeq \bigoplus_{i=1}^{r} (V_1^i)^{d_i} \oplus \bigoplus_{i=1}^{r} (V_2^{i+1})^{d_i}.
\]
By the remarks above, the condition for $d_i$ is now equivalent to $d_i = a_i$ for all $i$ and $\sum_{i=1}^r d_i = \sum_{i=1}^r b_i$.
This is equivalent to $\sum_{i=1}^r a_i = \sum_{i=1}^r b_i $.

Now the monoid $\Cpmodsim$  obviously generates a free subgroup of rank $2r-1$ in $\Z^{2r}$.
On the other hand, there are $r^2$ indecomposable elements in $\Cpmodsim$, so $r^2 > 2r - 1$ shows that it is not a free monoid unless $r = 1$.
\end{proof}

%%%%%%%%%%%%%%%%%%%%%%%%%%%%%%%%%%%%%%%%%%%%%%%%%%%%%%%
\subsection{Computation of the associated lattices}\label{ss:comp_latt}
%%%%%%%%%%%%%%%%%%%%%%%%%%%%%%%%%%%%%%%%%%%%%%%%%%%%%%%

The main objective of this subsection is to
determine the lattice $\Phi({}_p A_{I, D}^-)$ for various pairs $(I, D)$ with $I \subset D \subset G = \Gamma \times \{1, j\}$ as in Definition \ref{defn:AID}.

By Lemma \ref{lem:vanish}, we may assume that $p \mid \# I$, that is, $I$ contains $\lrang{\sigma}$.
By the same lemma, we may also assume that $j$ is not contained in $D$.
In this case, we can specify the possible pairs:

\begin{lem}\label{lem:cases}
Suppose $p \mid \# I$ and $j \not \in D$.
Then either
\begin{itemize}
\item[(I)]
$D = \lrang{\sigma, \tau^{r/e}}$ and $I = \lrang{\sigma, \tau^{r/d}}$ for some divisors $d \mid e \mid r$, or
\item[(II)]
$D = \lrang{\sigma, \tau^{r/e} j}$ and $I = \lrang{\sigma, (\tau^{r/e} j)^{e/d}}$ for some divisors $d \mid e \mid r$ such that $e$ is even.
\end{itemize}
\end{lem}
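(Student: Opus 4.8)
\textbf{Proof plan for Lemma \ref{lem:cases}.}

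The plan is to analyze directly which subgroups $D$ of $G = \Gamma \times \{1,j\}$ can contain $\lrang{\sigma}$, and then which normal subgroups $I \subset D$ with $D/I$ cyclic contain $\lrang{\sigma}$. First I would fix notation: since $\lrang{\sigma}$ is a normal $p$-Sylow subgroup of $\Gamma$ (hence of $G$, as $p$ is odd and $j$ has order $2$), any subgroup $D$ with $\lrang{\sigma} \subset D$ corresponds bijectively to a subgroup $\ol D$ of $G/\lrang{\sigma} \simeq C_r \times \{1,j\}$, via $D = \pi^{-1}(\ol D)$ where $\pi\colon G \to G/\lrang{\sigma}$ is the quotient. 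The group $C_r \times \{1,j\}$ is abelian, so I would enumerate its subgroups. Under the hypothesis $j \notin D$, equivalently $\ol j \notin \ol D$, the subgroups $\ol D$ of $C_r \times C_2$ not containing $\ol j = (1,j)$ are exactly the ones of two shapes: either $\ol D \subset C_r \times \{1\}$, giving $\ol D = \lrang{\ol\tau^{\,r/e}}$ for a divisor $e \mid r$ (case (I)); or $\ol D$ projects isomorphically onto a subgroup of $C_r$ of order $e$ but sits "diagonally," i.e. $\ol D = \lrang{(\ol\tau^{\,r/e}, j)}$ — and for this to be a subgroup with $\ol j \notin \ol D$ one needs the generator $(\ol\tau^{\,r/e}, j)$ to have order $e$ with $e$ even (so that its $e/2$-th power is $(\ol\tau^{\,r/2}, 1) \neq \ol j$, never hitting $\ol j$), which forces $e$ even (case (II)). Pulling back, $D = \lrang{\sigma, \tau^{r/e}}$ in case (I) and $D = \lrang{\sigma, \tau^{r/e} j}$ in case (II). This is the routine part; the classification of subgroups of $C_r \times C_2$ avoiding a fixed order-$2$ element is elementary.

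Next I would determine the admissible $I$. Since $D/I$ must be cyclic and $I \supset \lrang{\sigma}$ (from $p \mid \#I$ together with $\lrang{\sigma}$ being the $p$-Sylow, and using that the problem only cares about $I$ up to the equivalence class of $A_{I,D}$, per Lemma \ref{lem:vanish}(1) — actually $p \mid \#I$ plus normality forces $\lrang{\sigma} \subset I$ directly). So $I = \pi^{-1}(\ol I)$ for $\ol I \subset \ol D$ with $\ol D / \ol I$ cyclic; since $\ol D$ is itself cyclic in both cases (generated by $\ol\tau^{\,r/e}$ of order $e$ in case (I), by $(\ol\tau^{\,r/e},j)$ of order $e$ in case (II)), every subgroup $\ol I$ works, and subgroups of a cyclic group of order $e$ are indexed by divisors $d \mid e$. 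In case (I), $\ol I = \lrang{\ol\tau^{\,r/d}}$, so $I = \lrang{\sigma, \tau^{r/d}}$. In case (II), $\ol I$ is the unique subgroup of $\lrang{(\ol\tau^{\,r/e},j)}$ of order $d$, namely generated by the $(e/d)$-th power of the generator, so $I = \lrang{\sigma, (\tau^{r/e}j)^{e/d}}$. This matches the stated formulas exactly.

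The only genuine subtlety — and the step I would be most careful about — is bookkeeping the diagonal case (II): verifying that $(\ol\tau^{\,r/e}, j)$ really does have order $e$ (not $2e$ or $e/2$) precisely when $e$ is even, that $\ol j$ is genuinely absent from $\lrang{(\ol\tau^{\,r/e},j)}$ under that parity condition, and that one does not double-count, i.e. that the groups in (I) and (II) are distinct and that different $(d,e)$ give honestly different pairs (up to the conjugacy that is irrelevant for $A_{I,D}$). Concretely: $(\ol\tau^{\,r/e},j)^k = (\ol\tau^{\,rk/e}, j^k)$, which equals the identity iff $e \mid k$ and $k$ is even; if $e$ is even this first happens at $k = e$, whereas if $e$ were odd the order would be $2e$ and then $\ol j = (\ol\tau^{\,r/e},j)^e$ would lie in the subgroup, contradicting $j \notin D$ — so the parity constraint $e$ even in (II) is exactly the obstruction. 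I would also remark that in case (II), $\Gamma \cap D = \lrang{\sigma, \tau^{r/e}j}\cap\Gamma$ has order $e/2 \cdot p$... wait, more precisely one should just note $D$ has order $pe$ in both cases and $I$ has order $pe/\# (D/I) = p\cdot(e\cdot d/e)$... I would state orders cleanly: $\#D = pe$, $\#I = pd$, $D/I$ cyclic of order $e/d$, in both cases. Assembling these observations gives the lemma; no deep input beyond the structure of $\lrang{\sigma}$ as the normal $p$-Sylow is needed.
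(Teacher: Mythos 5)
Your proof is correct and uses essentially the same approach as the paper: pass to the quotient $G/\lrang{\sigma}\simeq C_r\times\lrang{j}$, observe that $j\notin D$ forces the projection of $\ol D$ to $C_r$ to be injective (hence $\ol D$ is cyclic), lift a generator as either $\tau^{r/e}$ or $\tau^{r/e}j$, and use the parity argument $(\tau^{r/e}j)^e=j$ for odd $e$ to rule out odd $e$ in case (II). The paper's version is slightly terser — it just records the injective composite $D/\lrang{\sigma}\hookrightarrow C_r\times\lrang{j}\twoheadrightarrow C_r$ and reads off both cyclicity and the two possible generators from that, then classifies $I$ as a subgroup of the cyclic group $D/\lrang{\sigma}$ — but the content is identical. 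Your extra worries about "double-counting" are unnecessary for the statement, which only asserts that one of the two shapes occurs, not a parametrization up to isomorphism; otherwise the bookkeeping (orders $\#D=pe$, $\#I=pd$, $D/I$ cyclic of order $e/d$) is correct.
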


\begin{proof}
The natural map
\[
D/\lrang{\sigma} \hookrightarrow  G 
      /\lrang{\sigma} \simeq C_r \times \lrang{j} \twoheadrightarrow C_r
\]
is injective, so both $I/\lrang{\sigma}$ and $D/\lrang{\sigma}$ are cyclic groups.
Let $d$ and $e$ be the order of $I/\lrang{\sigma}$ and $D/\lrang{\sigma}$, respectively.
Then the above map sends a generator of $D/\lrang{\sigma}$ to $\tau^{r/e}$, so a generator of $D/\lrang{\sigma}$ is either $\tau^{r/e}$ or $\tau^{r/e} j$.
In the latter case, $e$ must be even,
since otherwise $j = (\tau^{r/e}j)^e$ would be in $D$, which was excluded.
\end{proof}

\begin{lem}
As $\Z_p[\Gamma]$-modules, we have 
\[
{}_p A_{I, D}^- \sim
\begin{cases}
\F_p[C_r]/(\tau^{r/e} - 1) \simeq \F_p[C_r/\lrang{\tau^{r/e}}]& \text{in case (I)}\\
\F_p[C_r]/(\tau^{r/e} + 1) & \text{in case (II)}.
\end{cases}
\]
\end{lem}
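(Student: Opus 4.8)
The strategy is to compute $\Phi(\omega^{-1}({}_pA_{I,\varphi}^-))$ as a lattice via the fractional-ideal description $\cL_{I,\varphi}=(\nu_I, 1-\tfrac{\nu_I}{\#I}\varphi^{-1})$ introduced earlier, localized and minus-ed, and then read off ${}_pA_{I,D}^-$ up to equivalence by inverting $\Omega$. However, since the final answer is a very small module (a quotient of $\F_p[C_r]$), it is cleaner to work directly: reduce $A_{I,\varphi}$ modulo $p$, take minus parts, and identify the result, using Nakayama and the equivalence relation to discard $\pd\le 1$ pieces. First I would recall from the proof of Lemma \ref{lem:vanish} that, after reducing mod $p$ and taking minus parts, the relevant local datum is $\bigl(\Z_p[D/I]/(g_\varphi)\bigr)^-$ where $g_\varphi = 1-\varphi^{-1}+\#I$, and that mod $p$ (since $p\mid \#I$) this becomes $\F_p[D/I]/(1-\varphi^{-1})=\F_p$ with $\varphi$ acting trivially; so after inducing up, ${}_pA_{I,\varphi}^- $ is equivalent mod the $\pd\le1$ ideal to $\F_p[G]\otimes_{\F_p[D]}\F_p$, i.e.\ $\F_p[G/D]$ with its natural $G$-action — and then the minus part relative to $j$ must be taken. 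The core of the work is to identify $(\F_p[G/D])^-$ as an $\F_p[C_r]$-module (recall $\Z_p[G]^-\simeq\Z_p[\Gamma]$ and, after killing $\lrang\sigma$ which acts trivially here, $\F_p[\Gamma/\lrang\sigma]=\F_p[C_r]$) in each of the two cases of Lemma \ref{lem:cases}.

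In case (I), $D=\lrang{\sigma,\tau^{r/e}}$ contains $\lrang\sigma$, so $G/D \simeq (C_r\times\lrang j)/\lrang{\tau^{r/e}}$ as a $C_r\times\lrang j$-set; taking the $j$-minus part simply kills the $\lrang j$-direction and leaves $\F_p[C_r/\lrang{\tau^{r/e}}] = \F_p[C_r]/(\tau^{r/e}-1)$, with $\tau$ acting by translation. This is the asserted answer; I should double-check that passing from $A_{I,\varphi}$ (rather than $A_{I,\varphi}$ mod $p$) to its equivalence class does not introduce extra summands — this follows because the kernel of reduction mod $p$ is $p\,{}_pA_{I,\varphi}$, which is itself a quotient by a non-zero-divisor of a projective, hence of $\pd\le 1$, so it drops out under $\sim$; the same applies to the $\pd\le1$ part $\F_p[D/I]/(g_\varphi) = \F_p[D/I]$ that was split off in Lemma \ref{lem:vanish}(1)'s argument. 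In case (II), $D=\lrang{\sigma,\tau^{r/e}j}$ with $e$ even: now $D$ is not contained in $\Gamma$, and the image of $D$ under $G/\lrang\sigma\simeq C_r\times\lrang j \to C_r$ is $\lrang{\tau^{r/e}}$, but $D\cap\lrang j = 1$. The quotient $(\F_p[G/D])^-$ is then computed by noting $G/\lrang\sigma = C_r\times\lrang j$ has order $2r$, $D/\lrang\sigma$ has order $e$ and meets $\lrang j$ trivially, so $\F_p[G/D]$ has $\F_p$-dimension $2r/e$, and as a $C_r$-module (minus part) one gets $\F_p[C_r]/(\tau^{r/e}+1)$: the generator $\tau^{r/e}j$ of $D/\lrang\sigma$ forces $\tau^{r/e}$ to act as $j^{-1} = j$, which on the minus part is $-1$, giving the relation $\tau^{r/e}=-1$. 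The key computation is precisely this identification of the $j$-action, which I would carry out by choosing explicit coset representatives $\{\tau^a j^{\epsilon}\}$ for $G/D$ and tracking the $\F_p[C_r\times\lrang j]$-module structure.

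\textbf{Main obstacle.} The delicate point is not the representation-theoretic bookkeeping but making sure the equivalence $\sim$ really does reduce $A_{I,D}^-$ all the way down to the tiny module claimed, i.e.\ that every ``$\pd\le1$ over $\Z_p[G]$'' piece that arises — the reduction-mod-$p$ kernel and the projective-over-$\Z_p[D]$ summand $\Z_p[D/I]/(g_\varphi)$ induced up — is legitimately discardable, and that taking minus parts is compatible with all these operations (it is exact since $p$ is odd, so $1+j$ and $1-j$ give idempotents over $\Z_p$). Concretely, I would phrase the argument as: ${}_pA_{I,\varphi}$ sits in the exact sequence $0\to\Z_p[G]\otimes_{\Z_p[D]}W_w\to\Z_p[G]\to {}_pA_{I,\varphi}\to 0$ from the earlier theorem, whose outer terms are cohomologically trivial once $p\nmid$ nothing obstructs — no, more simply, I use that ${}_pA_{I,\varphi}\sim \Z_p[D/I]/(g_\varphi)$ induced up (since the difference has $\pd\le1$), that this is annihilated by $p$ hence equals its mod-$p$ reduction $\F_p[D/I]/(g_\varphi)$, that mod $p$ with $p\mid\#I$ we have $g_\varphi\equiv 1-\varphi^{-1}$, and that $\F_p[D/I]/(1-\varphi^{-1})=\F_p[D/\lrang\varphi\cdot I]$... wait, $D/I$ is cyclic generated by the image of $\varphi$, so $(1-\varphi^{-1})$ generates the augmentation-type ideal and the quotient is $\F_p$ with trivial action. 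Inducing up: $\F_p[G]\otimes_{\F_p[D]}\F_p = \F_p[G/D]$. Then take $(-)^-$ and kill $\lrang\sigma$ (which acts trivially on $\F_p[G/D]$ since $\lrang\sigma\subseteq I\subseteq D$). The last simplification — that $\F_p[G/D]$ has trivial $\lrang\sigma$-action and so descends to an $\F_p[C_r]$-module — deserves an explicit sentence. Modulo these checks, the two displayed formulas follow from the explicit coset computation in cases (I) and (II) of Lemma \ref{lem:cases}.
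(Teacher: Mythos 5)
Your approach is essentially the paper's: reduce $\Z_p[D/I]/(g_\varphi)$ to the trivial $\Z_p[D]$-module $\F_p$, induce up to $\F_p[G/D]$, take minus parts, and descend through $\lrang{\sigma}$. The coset computations you outline in cases (I) and (II) — in particular, the observation that in case (II) the relation $\tau^{r/e}j=1$ forces $\tau^{r/e}\mapsto -1$ on the minus part — are correct and match the paper's answer.

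However, there is a genuine gap in how you justify passing to the mod-$p$ reduction. You offer two reasons, and the one you lean on in the middle of the argument is wrong: the claim that the kernel of reduction mod $p$ is $p\,{}_pA_{I,\varphi}$, ``which is itself a quotient by a non-zero-divisor of a projective, hence of $\pd\le 1$,'' has no basis — for a general finite $\Z_p[\Gamma]$-module $M$ there is no reason $pM$ should have $\pd\le 1$, and if this reasoning were valid it would prove $M\sim M/pM$ for all $M$, which is false (for nontrivial $\Gamma$, $\Z/p^2$ with trivial action is not equivalent to $\Z/p$). Your second, later statement — that $\Z_p[D/I]/(g_\varphi)$ is actually annihilated by $p$ — is the correct route and is what genuinely closes the argument, but you assert it without proof, and it is precisely the technical heart of the lemma. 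The verification: in $\Z_p[D/I]/(g_\varphi)$ one has $\varphi^{-1}=1+\#I$, and since $\varphi$ has order $[D:I]$ there (equal to $e/d$ in case (I)), this forces $(1+pd)^{e/d}-1=0$; expanding, this element equals $pe\cdot(\text{unit})$ in $\Z_p$, and $e<p$ because $e\mid r\mid p-1$, so one obtains $p\cdot(\text{unit})=0$. This is exactly the ideal computation the paper makes explicit, showing $(\tau^{r/d}-1,\,1-\tau^{-r/e}+pd)=(p,\,1-\tau^{-r/e})$, and it is where the hypothesis $r\mid p-1$ enters crucially. Once this step is supplied, the remainder of your argument goes through and agrees with the paper's.
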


\begin{proof}
(I)
We may take $\tau^{r/e}$ as a generator of $D/I$, and then
\[
{}_p A_{I, \tau^{r/e}}^-
\simeq \Z_p[C_r \times \lrang{j}]/(\tau^{r/d} - 1, 1 - \tau^{-r/e} + pd, j + 1).
\]
As ideals we can compute
\begin{align*}
(\tau^{r/d} - 1, 1 - \tau^{-r/e} + pd)
& = ((1 + pd)^{e/d} - 1, 1 - \tau^{-r/e} + pd)\\
& = (p, 1 - \tau^{-r/e} + pd)\\
& = (p, 1 - \tau^{-r/e}).
\end{align*}
Therefore, we obtain
\begin{align*}
{}_p A_{I, \tau^{r/e}}^-
& \simeq \Z_p[C_r \times \lrang{j}]/(p, 1 - \tau^{-r/e}, j + 1)\\
& \simeq \F_p[C_r]/(1 - \tau^{-r/e}).
\end{align*}

(II)
In the same way, we have
\[
{}_p A_{I, \tau^{r/e} j}^-
\simeq \Z_p[C_r \times \lrang{j}]/((\tau^{r/e}j)^{e/d} - 1, 1 - \tau^{-r/e} j + pd, j + 1).
\]
As ideals of $\Z_p[C_r \times \lrang{j}]$ we can compute
\begin{align*}
& ((\tau^{r/e}j)^{e/d} - 1, 1 - \tau^{-r/e} j + pd, j + 1)\\
& \quad = ((1 + pd)^{e/d} - 1, 1 + \tau^{-r/e} + pd, j+1)\\
& \quad = (p, 1 + \tau^{-r/e} + pd, j+1)\\
& \quad = (p, 1 + \tau^{-r/e}, j+1).
\end{align*}
Therefore, we obtain
\begin{align*}
{}_p A_{I, \tau^{r/e} j}^-
& \simeq \Z_p[C_r \times \lrang{j}]/(p, 1 + \tau^{-r/e}, j+1)\\
& \simeq \F_p[C_r]/(1 + \tau^{-r/e}).
\end{align*}
\end{proof}

\begin{rem}\label{rem:Indep_I}
It is remarkable that ${}_p A_{I, D}^-$ is independent of $I$ in both cases (I) and (II).
This makes it easier to study the admissible classes.
\end{rem}

\begin{lem}
For each $e \mid r$, we have
\[
\Phi(\F_p[C_r]/(\tau^{r/e} - 1)) \sim \bigoplus_{e \mid i} (L_1^i \oplus L_2^{i+1})
\]
up to projective equivalence, where $1 \leq i \leq r$.
\end{lem}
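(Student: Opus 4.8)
The plan is to compute $\Phi$ applied to the cyclic module $\F_p[C_r]/(\tau^{r/e}-1)$ by exploiting the relation between $\Phi$ and the shift operators established in Proposition \ref{prop:omegaO}, together with the explicit description of the rational representations $V_k^i$ used in the proof of the structure of $\Cpmodsim$. First I would identify the module: since $\F_p[C_r]/(\tau^{r/e}-1) \simeq \F_p[C_r/\lrang{\tau^{r/e}}]$, this is a finite $\Z_p[\Gamma]$-module annihilated by $p$ and by $\sigma - 1$, so it is really a module over the quotient ring $\F_p[C_e]$ (where $C_e = C_r/\lrang{\tau^{r/e}}$), and as such it is the free module of rank one over $\F_p[C_e]$. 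In fact it is the mod-$p$ reduction of $\Z_p[C_e]$, which in turn is $\bigoplus_{e \mid i}\Z_p(i)$ after semisimplification — more precisely there is a filtration of $\Z_p[C_e]$ whose graded pieces are the rank-one lattices $L_1^i = \Z_p(i)$ for those $i$ with $\omega^i$ trivial on $\tau^{r/e}$, i.e.\ $e \mid i$.

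The key computation is then to present $\F_p[C_e]$ as the cokernel of a map between projective $\Z_p[\Gamma]$-lattices, so that $\Phi$ of it is the kernel. The natural choice is the surjection $P \twoheadrightarrow \F_p[C_e]$ where $P = \bigoplus_{e \mid i} L_3^i$ is the projective lattice whose rational span is $\bigoplus_{e\mid i} V_3^i = \bigoplus_{e \mid i}(V_1^i \oplus V_2^{i+1})$: concretely $P$ is the component $\Z_p[\Gamma]\varepsilon$ of $\Z_p[\Gamma]$ cut out by the idempotent $\varepsilon = \sum_{e \mid i} e_i$, which one checks equals $\frac{1}{e}\sum_{k=0}^{e-1}\tau^{-kr/e}$, the idempotent giving the fixed part under $\tau^{r/e}$. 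Reducing mod $p$ and further killing $\sigma - 1$ gives a surjection $P \to \F_p[C_e]$; its kernel $Q$ is a lattice with $\Q_p \otimes Q \simeq \Q_p \otimes P$, hence $\Phi(\F_p[C_e])$ is the class of $Q$. To identify $Q$ up to projective equivalence, I would build a filtration of $Q$ using the short exact sequences $0 \to L_2^{i+1} \to L_3^i \to L_1^i \to 0$ of Theorem \ref{thm:class_latt} together with the $(1-\sigma)$-multiplication trick, exactly as in the link between the three series: passing from $L_3^i$ to the kernel of $L_3^i \to \F_p$ (reduction mod $p$ of the trivial-type quotient) produces layers $L_1^i$ and $L_2^{i+1}$, and summing over $e \mid i$ shows $Q \sim \bigoplus_{e\mid i}(L_1^i \oplus L_2^{i+1})$ modulo a projective lattice. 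Since we work in $\Lat^{\pe}_p$, the projective summand drops out and we land on the asserted formula.

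Alternatively, and perhaps more cleanly, I would avoid hand-building the filtration and instead argue numerically: by Proposition \ref{prop:fin_lat} the class $\Phi(\F_p[C_e])$ is determined inside the free monoid $\Lat^{\pe}_p$ on $\{L_1^i, L_2^i\}$ by the requirement that it be a class $L$ with $\Q_p \otimes L \simeq \Q_p \otimes F$ for a projective $F$, and one can pin it down by a counting/character argument on composition factors. The Brauer character (equivalently, the class in the Grothendieck group $K_0$ of $\F_p[\Gamma]$-modules lifted to characteristic zero) of $\F_p[C_e]$ is $\sum_{e \mid i}\chi_i$ where $\chi_i$ is the character of $V_1^i$; matching this against the characters of $L_1^i$ ($=\chi_i$) and of $L_2^{i+1}$ (which reduces mod $p$ to the same Brauer character as $L_1^i \oplus L_1^{i+1} \oplus \cdots$ — here one uses that $V_2^{i+1}$ mod $p$ has Brauer character $\sum_{?}\chi_?$) determines the multiplicities $a_i, b_i$ uniquely once the projectivity constraint $\sum a_i = \sum b_i$ of the structure theorem is imposed. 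This forces $a_i = b_i = 1$ for $e \mid i$ and $a_i = b_i = 0$ otherwise, i.e.\ $\Phi(\F_p[C_r]/(\tau^{r/e}-1)) \sim \bigoplus_{e\mid i}(L_1^i \oplus L_2^{i+1})$.

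The main obstacle I anticipate is the bookkeeping of which index shift ($i$ versus $i+1$) attaches to the $L_2$-factors, and making sure the reduction-mod-$p$ Brauer characters of $L_2^i$ are handled correctly — the lattices $L_2^i$ for varying $i$ are non-isomorphic but have the same rational span $V$, so their mod-$p$ composition factors are genuinely different and must be tracked carefully, and the off-by-one in $L_2^{i+1} \hookrightarrow L_3^i$ propagates through every step. Once that indexing is nailed down (most transparently by running the explicit filtration of $P = \Z_p[\Gamma]\varepsilon$ through the known short exact sequences rather than by abstract character juggling), the rest is the routine observation that the projective part is invisible in $\Lat^{\pe}_p$.
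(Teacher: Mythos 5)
Your first sketch is essentially the paper's argument, but pushed through at the level of the aggregated projective $P=\Z_p[\Gamma]\varepsilon$ rather than the paper's cleaner route of doing the base case $e=r$ first and then twisting: the paper takes the surjection $\Z_p[\lrang{\sigma}]\to\F_p$, identifies $\Phi(\F_p)$ with the maximal ideal $(p,\sigma-1)$, and observes this is \emph{isomorphic} to $L_1^0\oplus L_2^1$; then it uses $\F_p[C_r]/(\tau^{r/e}-1)\simeq\bigoplus_{e\mid i}\F_p(i)$ together with $\Phi(\F_p(i))=\Phi(\F_p)(i)$. Where your write-up has a real gap is the phrase ``produces layers $L_1^i$ and $L_2^{i+1}$, and summing \dots shows $Q\sim\bigoplus(L_1^i\oplus L_2^{i+1})$''. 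A filtration with those graded pieces is not the same as a direct sum decomposition in $\Lat^{\pe}_p$: projective equivalence is an honest isomorphism up to projective summands, so you must actually show the extension $0\to L_2^{i+1}\to Q_i\to L_1^i\to 0$ splits. It does (the element $p$ in the maximal ideal of $\Z_p[\lrang{\sigma}]$ gives the splitting of $L_1^0$), but you need to say so; otherwise the conclusion does not follow from the filtration alone.

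Your ``alternative'' Brauer-character route does not work as stated, and your own diagnosis of the danger is based on a false belief. You write that the $L_2^i$ ``have the same rational span $V$, so their mod-$p$ composition factors are genuinely different,'' but in fact the mod-$p$ composition factors (i.e.\ the classes in the Grothendieck group, i.e.\ the Brauer characters) of $L_2^i$ are \emph{the same} for every $i$: $L_2^i/pL_2^i$ is uniserial with composition factors $\F_p(j)$ for $j=i,i+1,\dots,i+p-2$, and since $r\mid p-1$ this multiset runs through every twist the same number of times $(p-1)/r$, independently of $i$. Only the \emph{isomorphism type} of $L_2^i/pL_2^i$ (its socle, its Loewy series) depends on $i$, not its semisimplification. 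Consequently the Brauer character plus the projectivity constraint $\sum a_i=\sum b_i$ determines the $a_i$ individually and $\sum b_i$, but \emph{not} the individual $b_i$, so this counting argument cannot pin down the asserted answer. You would need a finer invariant sensitive to the lattice structure (e.g.\ socles of the mod-$p$ reductions, or the explicit twist computation), which in the end amounts to returning to the explicit filtration --- i.e.\ to the paper's approach of computing $\Phi(\F_p)=L_1^0\oplus L_2^1$ directly and then twisting.
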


\begin{proof}
First we show $\Phi(\F_p) = L_1^0 \oplus L_2^1$, which is the lemma for $e = r$.
Since $\Z_p[\lrang{\sigma}]$ is a projective $\Z_p[\Gamm]$-module, we can take a lattice $\Phi(\F_p)$ so that
\[
0 \to \Phi(\F_p) \to \Z_p[\lrang{\sigma}] \to \F_p \to 0
\]
is exact.
This shows that $\Phi(\F_p)$ is the class of the maximal ideal of $\Z_p[\lrang{\sigma}]$, which is then isomorphic to $L_1^0 \oplus L_2^1$.
For the general case, we only have to observe
\[
\F_p[C_r]/(\tau^{r/e} - 1) \simeq \bigoplus_{e \mid i} \F_p(i)
\]
and $\Phi(\F_p(i)) = \Phi(\F_p)(i)$, where $(i)$ denotes the Tate twists.
\end{proof}

Combining these lemmas, we obtain the goal in this subsection:

\begin{thm}\label{thm:Phi_A}
We have
\[
\Phi({}_p A_{I, D}^-)
\sim
\begin{cases}
0 & \text{unless $p \mid \# I$ and $j \not \in D$}\\
\bigoplus_{e \mid i} (L_1^i \oplus L_2^{i+1}) & \text{in case (I)}\\
\bigoplus_{\frac{e}{2} \mid i, e \nmid i} (L_1^i \oplus L_2^{i+1}) & \text{in case (II)}
\end{cases}
\]
up to projective equivalence, where $1 \leq i \leq r$.
\end{thm}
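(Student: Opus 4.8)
The statement is essentially a bookkeeping synthesis of the three preceding lemmas together with Lemma~\ref{lem:vanish}, so the plan is to assemble these pieces carefully and then perform the one genuinely new computation needed in case (II).

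\emph{Step 1 (the vanishing cases).} First I would dispose of everything outside cases (I) and (II). By Lemma~\ref{lem:vanish}(1), if $p\nmid \#I$ then ${}_pA_{I,D}\sim 0$, hence ${}_pA_{I,D}^-\sim 0$ and $\Phi$ sends this to $0$ in $\Lat^{\pe}_p$. By Lemma~\ref{lem:vanish}(2), if $j\in D$ then ${}_pA_{I,D}^-\sim 0$ as well. So we may assume $p\mid \#I$ and $j\notin D$, and then Lemma~\ref{lem:cases} tells us we are precisely in case (I) or case (II) for some divisors $d\mid e\mid r$ (with $e$ even in case (II)).

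\emph{Step 2 (reduction to $\F_p[C_r]$-modules).} Here I invoke Remark~\ref{rem:Indep_I} (the lemma computing ${}_pA_{I,D}^-$): up to equivalence, ${}_pA_{I,D}^-$ depends only on the case and on $e$, being $\F_p[C_r]/(\tau^{r/e}-1)$ in case (I) and $\F_p[C_r]/(\tau^{r/e}+1)$ in case (II). Since $\Phi$ is a well-defined homomorphism on $\Cpmodsim$ (Proposition~\ref{prop:fin_lat}), it suffices to compute $\Phi$ of these two explicit $\F_p[C_r]$-modules.

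\emph{Step 3 (case (I)).} This is exactly the preceding lemma: $\Phi(\F_p[C_r]/(\tau^{r/e}-1)) \sim \bigoplus_{e\mid i}(L_1^i\oplus L_2^{i+1})$ up to projective equivalence, which gives the middle line. For case (II) I would argue analogously. Decompose $\F_p[C_r]/(\tau^{r/e}+1)$ into its $\omega$-isotypic pieces over $\Z_p[C_r]$: writing $\tau^{r/e}$ acts on the $\omega^i$-component as the scalar $\rho^{ir/e}$ (a root of unity of order $e/\gcd(i,e)\cdot\frac{1}{\text{stuff}}$ — concretely, $\omega^i(\tau^{r/e})$ has order $e/\gcd(e,i)$ modulo $p$), one sees that $\tau^{r/e}+1\equiv 0$ kills exactly those components where $\omega^i(\tau^{r/e})=-1$, i.e.\ where the order of $\omega^i(\tau^{r/e})$ is $2$, which (using that $r$, hence $e$, need only satisfy $e$ even) translates to the condition $\frac{e}{2}\mid i$ but $e\nmid i$. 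Thus $\F_p[C_r]/(\tau^{r/e}+1)\simeq\bigoplus_{\frac e2\mid i,\ e\nmid i}\F_p(i)$, and applying $\Phi(\F_p(i))=\Phi(\F_p)(i)=L_1^i\oplus L_2^{i+1}$ (established in the previous lemma) finishes case (II).

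\emph{Main obstacle.} The only step with real content beyond citing the prior lemmas is identifying, in case (II), the precise set of twists $i$ for which the $\omega^i$-component of $\F_p[C_r]/(\tau^{r/e}+1)$ survives — that is, verifying that $\omega^i(\tau^{r/e})\equiv -1 \pmod p$ is equivalent to $\frac e2\mid i$ and $e\nmid i$. This is an elementary order computation in $(\Z/p\Z)^\times$: $\omega^i(\tau^{r/e})=\rho^{ir/e}$ where $\rho$ has order $r$, so $\rho^{r/e}$ has order $e$, and $(\rho^{r/e})^i=-1$ exactly when $i$ is an odd multiple of $e/2$. I expect this to be short but it is the one place where one must be careful with the indexing conventions (in particular the shift $i\mapsto i+1$ between the $L_1$ and $L_2$ series).
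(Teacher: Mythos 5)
Your proof is correct. For case (I) your route is identical to the paper's. For case (II) you take a more hands-on path: you decompose $\F_p[C_r]/(\tau^{r/e}+1)$ into its $\omega$-isotypic pieces $\F_p(i)$ and determine by the order computation which pieces survive, namely those $i$ with $s^{ir/e}=-1$, equivalently $\gcd(e,i)=e/2$, equivalently $\tfrac{e}{2}\mid i$ and $e\nmid i$. This is correct, and the bookkeeping with the shift $i\mapsto i+1$ for the $L_2$-series is handled by the identity $\Phi(\F_p(i))=\Phi(\F_p)(i)=L_1^i\oplus L_2^{i+1}$ already established in the case (I) lemma.

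The paper's treatment of case (II) is slightly different and avoids the fresh order computation entirely. Since $p$ is odd, the polynomials $\tau^{r/e}-1$ and $\tau^{r/e}+1$ are coprime over $\F_p$, giving
\[
\F_p[C_r]/(\tau^{r/(e/2)}-1)\;\simeq\;\F_p[C_r]/(\tau^{r/e}-1)\;\oplus\;\F_p[C_r]/(\tau^{r/e}+1).
\]
Applying the already-proven case (I) formula to the left side with the divisor $e/2$, and to the first summand with the divisor $e$, one reads off $\Phi$ of the second summand by cancellation in the free monoid $\Lat^{\pe}_p$. The two routes are equivalent in substance; yours is more explicit and self-contained, while the paper's bootstraps case (II) from case (I) to avoid repeating the isotypic analysis.
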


\begin{proof}
In Case (I), the claim is already proved.
For Case (II), we only have to use the isomorphism
\[
\F_p[C_r]/(\tau^{2r/e} - 1) \simeq \F_p[C_r]/(\tau^{r/e} - 1) \oplus \F_p[C_r]/(\tau^{r/e} + 1).
\]
\end{proof}

%%%%%%%%%%%%%%%%%%%%%%%%%%%%%%%%%%%%%%%% 
\subsection{The structure of $\cZ_p^{\adm}$}\label{ss:adm_latt}
%%%%%%%%%%%%%%%%%%%%%%%%%%%%%%%%%%%%%%%%

We can now prove Theorem \ref{thm:11}(2).

\begin{thm}\label{thm:11(2)_pf}
The monoid $\cZ_p^{\adm}$ of admissible modules over $\Z_p[\Gamma]$ is a free monoid of rank $\sigma_0(r)$, 
where $\sigma_0(r)$ denotes the number of positive divisors of $r$.
\end{thm}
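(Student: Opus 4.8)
The plan is to use the injective homomorphism $\Phi$ together with the commutative diagram from Proposition \ref{prop:omegaO} to transport the computation into $\Lat^{\pe}_p$, where everything is controlled by the Krull--Schmidt theorem (Remark \ref{rem:Latpe_free}). Since $\cZ_p^{\adm}$ is generated by the classes $\omega^1({}_p A_{I, D}^-)$, and $\Phi$ is an injective monoid homomorphism with $\Phi \circ \omega^1 = \Omega \circ \Phi$, it suffices to understand the submonoid of $\Lat^{\pe}_p$ generated by the lattices $\Omega(\Phi({}_p A_{I, D}^-))$. By Theorem \ref{thm:Phi_A} the nonzero ones among the $\Phi({}_p A_{I, D}^-)$ are, up to projective equivalence, the lattices $M_e := \bigoplus_{e \mid i}(L_1^i \oplus L_2^{i+1})$ indexed by divisors $e \mid r$ (case (I)), while case (II) contributes differences $M_{e/2} - M_e$ which lie in the subgroup generated by the $M_e$ and hence contribute nothing new as generators of the \emph{monoid} — this point must be checked, but it follows since $M_{e/2} = \bigoplus_{(e/2)\mid i}(L_1^i \oplus L_2^{i+1})$ is the direct sum of the case-(II) lattice and $M_e$. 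So the generators of $\cZ_p^{\adm}$ reduce, after applying $\Phi$, to $\{\Omega(M_e) : e \mid r\}$, a set of $\sigma_0(r)$ lattices.

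Next I would make the $M_e$ explicit and apply $\Omega$. Since $\Omega$ is an automorphism of $\Lat^{\pe}_p$ it preserves direct-sum decompositions into indecomposables; I need to know $\Omega(L_1^i)$ and $\Omega(L_2^{i+1})$. Using the short exact sequence $0 \to L_2^{i+1} \to L_3^i \to L_1^i \to 0$ with $L_3^i$ projective, we read off $\Omega(L_1^i) \sim L_2^{i+1}$ in $\Lat^{\pe}_p$; and iterating (or using that $\Omega$ is an automorphism of a free monoid of rank $2r$ permuting the $2r$ basis elements, since $\Omega$ sends indecomposable non-projective lattices to indecomposable non-projective lattices) we get $\Omega(L_2^{i+1}) \sim L_1^{i}$ up to a possible shift in the index that one pins down from the same exact sequence read the other way, i.e. $0 \to \Omega(L_1^i) \to F \to L_1^i \to 0$ forces $\Omega(L_1^i) \sim L_2^{i+1}$ and then $\Omega^2(L_1^i) \sim \Omega(L_2^{i+1})$ must again be an indecomposable of rank $1$, namely $L_1^{i'}$ for the appropriate $i'$. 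The upshot is that $\Omega(M_e) \sim \bigoplus_{e \mid i}(L_2^{i+1} \oplus L_1^{i'})$, a sum of $2\,\frac{r}{e}$ distinct indecomposable non-projective basis elements of $\Lat^{\pe}_p$, and crucially the index sets $\{i : e \mid i\}$ for distinct divisors $e$ are nested: $e \mid e'$ implies $\{i : e' \mid i\} \subset \{i : e \mid i\}$.

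The final step is a combinatorial freeness argument. Writing each $\Omega(M_e)$ as a $0$--$1$ vector in $\N^{2r}$ (its coordinate vector in the free monoid $\Lat^{\pe}_p$), I must show these $\sigma_0(r)$ vectors freely generate a free submonoid — equivalently that the only relation $\sum_e c_e\,\Omega(M_e) = \sum_e c'_e\,\Omega(M_e)$ in $\Lat^{\pe}_p$ has $c_e = c'_e$, and moreover that no $\Omega(M_e)$ is a sum of others. Because the supports are nested by divisibility, one can peel off generators by induction on $r/e$: look at a coordinate $i$ with $e \mid i$ but $e' \nmid i$ for every proper multiple $e'$ of $e$ dividing $r$ (such an $i$ exists, e.g. $i = e$ itself, noting that the distinct prime-power multiples of $e$ within $r$ do not all divide $e$); at that coordinate only $\Omega(M_e)$ among $\{\Omega(M_{e'}) : e \mid e'\}$ is supported, which lets one read off the coefficient of $\Omega(M_e)$ and induct downward. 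This shows the map $\N^{\sigma_0(r)} \to \cZ_p^{\adm}$ is an isomorphism of monoids, i.e. $\cZ_p^{\adm}$ is free of rank $\sigma_0(r)$. The main obstacle is the bookkeeping in this last step: one has to be careful that the nesting-by-divisibility of the supports really does give a triangular system (the divisor lattice is only a partial order, not a chain, so ``peeling off'' must be organized by a linear extension of the divisibility order), and one must separately rule out that some $\Omega(M_e)$ equals a nontrivial $\N$-combination of the others — which again follows from exhibiting, for each $e$, a coordinate supported only by $\Omega(M_e)$ among all generators, not just among those above $e$; taking $i$ with $e \mid i$ and $i$ not divisible by any prime $\ell$ with $\ell \nmid (r/e)\cdot e$... more simply, take $i=e$ and note $e' \mid e$ with $e' \ne e$ gives $e \nmid$ nothing — rather one checks directly that $\{i : e \mid i, 1 \le i \le r\}$ is not contained in $\bigcup_{e' \ne e} \{i : e' \mid i\}$, which holds because $e$ itself is divisible by no proper-or-improper divisor-relation that would be needed, i.e. $e \in \{i : e \mid i\}$ but for $e' \nmid e$ we have $e \notin \{i : e' \mid i\}$, and for $e' \mid e$ with $e' \ne e$ the support of $\Omega(M_{e'})$ strictly contains that of $\Omega(M_e)$ so it cannot help produce $\Omega(M_e)$ with nonnegative coefficients without overshooting on some coordinate outside $\{i : e \mid i\}$. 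Assembling these observations gives the theorem.
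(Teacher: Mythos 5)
Your overall strategy — transport everything through $\Phi$ (and $\Omega$) into $\Lat^{\pe}_p$, write each generator as a $0$--$1$ vector with respect to the Krull--Schmidt basis, and argue freeness by a triangular/echelon pattern indexed by divisors of $r$ — is the same strategy the paper uses, and the rank count $\sigma_0(r)$ is correct. However, there is a genuine logical error in the identification of the generating set, plus an omitted verification.

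The error: you claim that the case-(II) lattices $L'(e) := \bigoplus_{\frac{e}{2}\mid i,\, e\nmid i}(L_1^i\oplus L_2^{i+1})$ ``contribute nothing new as generators of the monoid,'' reasoning from $M_{e/2} = M_e\oplus L'(e)$. This relation does hold, but it implies the \emph{opposite} of what you conclude: in a monoid one can only add, not subtract, so the relation lets you drop $M_{e/2}$ from the generating set (it is a sum of the other two), whereas $L'(e)$ itself is \emph{not} expressible as an $\N$-combination of the $M_{e'}$ — for instance its support omits $i=r$, while every $M_{e'}$ has $r$ in its support. So the correct reduced generating set is $\{L(e) : 2e\nmid r\}\cup\{L'(e) : e \text{ even},\, e\mid r\}$ (as in the paper), not $\{M_e : e\mid r\}$. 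Your argument, read literally, establishes that the submonoid generated by $\{M_e\}_{e\mid r}$ is free of rank $\sigma_0(r)$, but that is a proper submonoid of $\cZ_p^{\adm}$, so you have not computed $\cZ_p^{\adm}$. (By coincidence the two generating sets have the same cardinality $\sigma_0(r)$, which is why your final number comes out right, but the proof does not go through as written.)

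A second, smaller gap: you take for granted that every lattice $L(e)$ and $L'(e)$ listed in Theorem \ref{thm:Phi_A} actually occurs among the generators $\Phi({}_p A_{I,D}^-)$ of $\cZ_p^{\adm}$. The definition of $\cZ_p^{\adm}$ only quantifies over pairs $(D,I)$ that are \emph{realizable in finite extensions of} $\Q_\ell$ for some $\ell$, so one must exhibit such realizable pairs hitting each target lattice. The paper does this via Remark \ref{rem:Indep_I} (only $D$ matters, not $I$) and Proposition \ref{prop:real_minimal} (realizability with $I=D$). Without this step you only get an inclusion of $\cZ_p^{\adm}$ into the monoid you describe, not equality.

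Finally, a comment on presentation rather than correctness: applying $\Omega$ to the generators and then computing $\Omega(L_1^i)$, $\Omega(L_2^i)$ explicitly, as you do, is harmless but unnecessary — since $\omega^1$ and $\Omega$ are automorphisms compatible with $\Phi$, one may just as well study the preimage $\Phi(\omega^{-1}(\cZ_p^{\adm}))$, whose generators are the $\Phi({}_p A_{I,D}^-)$ directly; this is the paper's choice and it avoids the (correct but irrelevant) computation of the Heller shift on indecomposables.
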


\begin{proof}
Recall that $\cZ_p^{\adm} \subset \Cpmodsim$ is the monoid generated by $\omega^1({}_p A_{I, D}^-)$ for the various $(D, I)$
that are realizable in finite extensions of $\Q_{\ell}$ for some prime $\ell$.
By using the automorphism induced by $\omega^1$ and $\Phi$, we see that $\cZ_p^{\adm}$ is isomorphic to the submonoid of $\Lat^{\pe}_p$ generated by $\Phi({}_p A_{I, D}^-)$.
Let $L(e) =  \bigoplus_{e \mid i} (L_1^i \oplus L_2^{i+1})$ for any $e \mid r$, and $L'(e) =
 \bigoplus_{\frac{e}{2} \mid i, e \nmid i} (L_1^i \oplus L_2^{i+1})$ for even $e \mid r$.
 Then by Theorem \ref{thm:Phi_A}, the monoid in question is contained in the monoid generated by these elements $L(e)$ and $L'(e)$.
It is not obvious that they coincide: we will need to establish local realizability of 
a suitably large supply of pairs $(D, I)$.
Nonetheless, this is indeed the case by Remark \ref{rem:Indep_I} and Proposition \ref{prop:real_minimal} established later (for now we do not need the extra properties (a)--(c), and the construction is quite simple).
As a result,
we only have to determine the submonoid of $\Lat_p^{\pe}$ generated by $L(e)$ and $L'(e)$.

We clearly have 
\[
L(e/2) = L(e) + L'(e)
\]
for even $e$ (as already used to determine $L'(e)$ in Theorem \ref{thm:Phi_A}).
Therefore, $L(e/2)$ for even $e$ can be removed from the set of generators, so the monoid is 
generated by $L(e)$ for $e$ with $2e \nmid r$ and $L'(e)$ for even $e$.
Now it is enough to show that these elements are linearly independent.
This is indeed quite easy: We only have to look at the coefficients of $L_1^1, \dots, L_1^r$.

To be precise, for each $e \mid r$, let $v(e) \in \Z^r$ be the row vector whose $i$-th component is $1$ if $e \mid i$ and $0$ otherwise.
For each even $e \mid r$, let $v'(e) \in \Z^r$ be the row vector whose $i$-th component if $1$ if $\frac{e}{2} \mid i, e \nmid i$ and $0$ otherwise.
Then our claim is that $v(e)$ for $e$ with $2e \nmid r$ and $v'(e)$ for even $e$ are linearly independent over $\Z$.
To show this, since $v(e/2) = v(e) + v'(e)$ for even $e$, we instead show that $v(e)$ for all $e \mid r$ are linearly independent.
This is easy: the $\sigma_0(r) \times r$ matrix whose rows are $v(e)$ is in row echelon form without zero rows,
 and therefore has maximal rank $\sigma_0(r)$. 
 To help visualizing this, we write out the matrix for $r = 6$:
\[
   \begin{pmatrix}
   v(1) \\ v(2) \\ v(3) \\ v(6)
   \end{pmatrix}
=
   \begin{pmatrix}
     1 & 1 & 1 & 1 & 1 & 1  \\
		 0 & 1 & 0 & 1 & 0 & 1  \\
		 0 & 0 & 1 & 0 & 0 & 1  \\
		 0 & 0 & 0 & 0 & 0 & 1  	
   \end{pmatrix}.
\]
\end{proof}
 
 \begin{rem}
 Let $\omega^{-1}$ be the inverse of $\omega^1: \Cpmodsim \overset{\simeq}{\to} \Cpmodsim$.
By the proof, we have shown that the basis of $\Phi(\omega^{-1}(\cZ_p^{\adm}))$ is $L(e)$ for $e$ with $2e \nmid r$ and $L'(e)$ for even $e$.
Using the Heller operator $\Omega$, we have $\Phi(\omega^{-1}(\cZ_p^{\adm})) = \Omega^{-1}(\Phi(\cZ_p^{\adm}))$.
Therefore, $\Phi(\cZ_p^{\adm})$ has a basis $\Omega(L(e))$ for $e$ with $2e \nmid r$ and $\Omega(L'(e))$ for even $e$.
We can show that $\Omega(L_1^i) = L_2^{i+1}$ and $\Omega(L_2^i) = L_1^i$ (we omit the details), which gives a complete description of $\Phi(\cZ_p^{\adm})$.
 \end{rem}
 
%%%%%%%%%%%%%%%%%%%%%%%%%%%%%%%%%%%%%%%%%%%%%%%%%%

\section{Realizability problem}\label{sec:realize}

We recall (Definition \ref{defn:rea}) that a module class is called realizable if
it is the class of $\Cl^{T, -}_L$ or ${}_p \Cl^{T, -}_L$ for some CM extension
$L/K$ with Galois group $G = \Gamma \times \{1, j\}$, and that 
realizable classes are admissible.
We will prove a partial converse to this statement. This requires
a lot of preparation.
In this section, we focus on how to construct totally real extensions with Galois group $\Gamma$ and prescribed ramification styles.
This will be applied in the next section to the construction of CM extensions.

    %%%%%%%%%%%%%%%%%%%%%
\subsection{Realizability}
    %%%%%%%%%%%%%%%%%%%%%

Let us begin with a formulation of the problem.
Let $\Gamma$ be a finite group.
Let $I_1, \dots, I_n, D_1, \dots, D_n$ be a family of subgroups of 
$\Gamma$ such that $I_i \subset D_i$ ($1 \leq i \leq n$).
In what follows, $\Lp$ plays the role of $L^+$ in the previous sections.

\begin{defn}\label{defn:rea_gl}
We say that the family $\{(D_i, I_i)\}_{1 \leq i \leq n}$ of subgroups of $\Gamma$ is 
{\it realizable in finite extensions of $\Q$} if there are 
\begin{itemize}
\item[(i)]
a finite extension $K/\Q$ and a finite Galois extension $\Lp/K$ such that both $\Lp$ and $K$ are totally real,
\item[(ii)]
an isomorphism $\psi: \Gal(\Lp/K) \simeq \Gamma$ of groups, and 
\item[(iii)]
a family of finite primes $w_1, \dots, w_n$ of $\Lp$ lying above distinct primes of $K$
\end{itemize}
such that, for each $1 \leq i \leq n$, we have
\[
\psi(D_{w_i}(\Lp/K)) = D_i,
\quad
\psi(I_{w_i}(\Lp/K)) = I_i,
\]
where $I_{w_i}(\Lp/K)$ and $D_{w_i}(\Lp/K)$ denote the inertia and decomposition groups 
respectively of $w_i$ in $\Gal(\Lp/K)$.
In this case, we say that the family $\{(D_i, I_i)\}_{1 \leq i \leq n}$ is realizable by $\Lp/K$.
\end{defn}

In the previous paper \cite[Theorem 4.2]{GK2} of the same authors, we established
 the realizability for the case where $\Gamma$ is abelian and the order of $\Gamma$ is
 odd (under the necessary conditions explained in Remark \ref{rem:nec_loc} below).
The main purpose of this section is to discuss the nonabelian case, 
with a special focus on $C_p \rtimes C_r$.

\begin{rem}
In the formulation of realizability, we do not consider the ramification of primes other than 
(the conjugates of) $w_1, \dots, w_n$.
This will turn out to be harmless for our final applications (see the proof of Theorem \ref{thm:rea_adm}).
\end{rem}

It is clear that, if the family $\{(D_i, I_i)\}_{1 \leq i \leq n}$ is realizable in finite 
extensions of $\Q$, then each $(D_i, I_i)$ is realizable in finite extensions of 
$\Q_{\ell_i}$ (Definition \ref{defn:rea_loc}), where $\ell_i$ is the prime number lying below $w_i$.
The main results of this section give a partial converse to this observation.

\begin{rem}\label{rem:nec_loc}
Recall that Lemma \ref{lem:real_allow} provides necessary conditions for local realizability.
Moreover, for a finite extension $K_v/\Q_{\ell}$, the Galois group of the maximal tamely 
ramified extension of $K_v$ has a presentation of the form
\[
\lrang{\sigma, \tau :  \sigma \tau \sigma^{-1} = \tau^{N(v)}},
\]
where $N(v)$ denotes the order of the residue field of $K_v$ (see \cite[Theorem (7.5.3)]{NSW08}).
Therefore, the quotient group $D/I_{(\ell)}$ is a quotient of this group and $I/I_{(\ell)}$ is 
generated by the image of $\tau$.
\end{rem}

%%%%%%%%%%%%%%%%%%%%%%%%%%%%%
\subsection{Local extensions to global extensions: results}
%%%%%%%%%%%%%%%%%%%%%%%%%%%%%

For technical reasons, the main results of this section involve $\hat{\Z}$-extensions.
Let $\Q^{\cyc}$ be the unique $\hat{\Z}$-extension of $\Q$.
In other words, $\Q^{\cyc}$ is the composite of the cyclotomic $\Z_q$-extensions 
of $\Q$ for all prime numbers $q$. As the local counterpart, for each prime 
number $\ell$, let $\Q_{\ell}^{\cyc}$ be the composite of the cyclotomic $\Z_q$-extensions 
of $\Q_{\ell}$ for all prime numbers $q$.

We introduce the notion of realizability in finite extensions of $\Q^{\cyc}$ (resp.~of $\Q_{\ell}^{\cyc}$) 
in the same way as Definition \ref{defn:rea_gl} (resp.~Definition \ref{defn:rea_loc}):
We simply replace $\Q$ by $\Q^{\cyc}$ (resp.~$\Q_{\ell}$ by $\Q_{\ell}^{\cyc}$).

Note that the properties in Remark \ref{rem:nec_loc} are also necessary for the
 realizability in finite extensions of $\Q_{\ell}^{\cyc}$.
The realizability in finite extensions of $\Q_{\ell}^{\cyc}$ additionally requires that the index $(D: I)$ is an $\ell$-power,
since $\Q_{\ell}^{\cyc}$ already contains the unramified $\Z_q$-extensions of 
$\Q_{\ell}$ for any primes $q \neq \ell$.

Our main results try to connect four different but similar statements; the statements 
and their interplay can be roughly visualized as follows.

$$ \xymatrix{
	\text{global realizability/$\Q^{\cyc}$} 
		\ar@<0.5ex>@{=>}[r]^-{\text{clear}}
		\ar@<0.5ex>@{=>}[d]_-{\text{Proposition \ref{prop:going_down}(1)}}
	& \text{local realizability/$\Q_{\ell}^{\cyc}$} 
		\ar@<0.5ex>@{->}[l]^-{\text{Theorem \ref{thm:main_app}}} 
		\ar@<0.5ex>@{=>}[d]^-{\text{Proposition \ref{prop:going_down}(2)}}\\
	\text{global realizability/$\Q$}
		\ar@{=>}[r]_-{\text{clear}}
	& \text{local realizability/$\Q_{\ell}$}
		\ar@<0.5ex>@{->}[u]^-{\text{Proposition \ref{prop:going_up}}}
}  $$
The arrows ``$\Rightarrow$'' are unconditional, while the arrows ``$\rightarrow$'' require technical assumptions.

    %%%%%%%%%%%%%%%%%%%%%
\subsection{Going down}
    %%%%%%%%%%%%%%%%%%%%%

We have satisfactory ``going down'' properties for realizability.

\begin{prop}\label{prop:going_down}
The following hold.
\begin{itemize}
  \item[(1)] Suppose that a family $\{(D_i, I_i)\}_{1 \leq i \leq n}$ of subgroups 
of $\Gamma$ is realizable in finite extensions of $\Q^{\cyc}$.
Then the same family is realizable in finite extensions of $\Q$.
  \item[(2)] Suppose that $(D, I)$ is realizable in finite extensions of $\Q_{\ell}^{\cyc}$.
Then $(D, I)$ is realizable in finite extensions of $\Q_{\ell}$.
\end{itemize}
\end{prop}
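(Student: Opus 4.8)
\textbf{Proof proposal for Proposition \ref{prop:going_down}.}

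The plan is to treat both parts by the same ``descent'' mechanism: a realization over $\Q^{\cyc}$ (resp.~$\Q_{\ell}^{\cyc}$) is by definition built from finitely many finite layers, so it is actually defined over a \emph{finite} subextension, and one then has to show that this finite subextension can be chosen to be $\Q$ itself (resp.~$\Q_{\ell}$ itself) without disturbing the prescribed decomposition and inertia data. First I would fix notation: write $\Q^{\cyc} = \bigcup_m \Q_m$ where $\Q_m/\Q$ is the finite subextension of degree equal to the $m$-th partial product, similarly $\Q_{\ell}^{\cyc} = \bigcup_m \Q_{\ell,m}$. Given the data $(K', \Lp', \psi, \{w_i'\})$ realizing $\{(D_i,I_i)\}$ over $\Q^{\cyc}$, the fields $K'$ and $\Lp'$ are finite over $\Q^{\cyc}$, hence each is generated over $\Q^{\cyc}$ by finitely many elements, all of which lie in some $\Q_m$-compositum; thus there is a number field $K_0$ (a finite extension of some $\Q_m$, in particular totally real since $\Q^{\cyc}$ and hence $\Q_m$ is totally real and one may enlarge $m$ to absorb the real-place condition) and a Galois extension $\Lp_0/K_0$ with $\Lp_0 \cdot \Q^{\cyc} = \Lp'$, $K_0 \cdot \Q^{\cyc} = K'$, and $\Gal(\Lp_0/K_0) \xrightarrow{\sim} \Gal(\Lp'/K') \xrightarrow{\psi} \Gamma$; the last isomorphism holds after possibly enlarging $m$, because $\Lp'/K'$ and $\Q^{\cyc}/\Q$ are linearly disjoint over the fixed field of their intersection, and that intersection, being abelian over $\Q$, is captured at a finite level. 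This already gives global realizability \emph{over the number field $K_0$}; but $K_0$ is some finite extension of $\Q_m$, not $\Q$ — yet that is exactly the shape allowed by Definition \ref{defn:rea_gl}, which only demands $K$ be a number field, not $\Q$ itself. So in fact once we are at a finite level we are done: pick primes $w_i$ of $\Lp_0$ below the $w_i'$; since $\Q^{\cyc}/\Q$ and more to the point $\Q^{\cyc}/\Q_m$ is \emph{unramified outside} the trivial set at the relevant primes... here one must be slightly careful, since $\Q^{\cyc}/\Q$ ramifies at every prime. The correct point is that $\Lp'/K'$ is the base change of $\Lp_0/K_0$ along the linearly disjoint extension $\Q^{\cyc}K_0/K_0$, so for a prime $w_i$ of $\Lp_0$ the decomposition and inertia groups of $w_i'$ in $\Gal(\Lp'/K') = \Gal(\Lp_0/K_0)$ are \emph{subgroups} of those of $w_i$; and conversely, since $\Lp'/K'$ is obtained from $\Lp_0/K_0$ by a base change that is linearly disjoint from it, restriction induces an isomorphism on decomposition groups at any prime of $\Lp_0$ whose restriction to $K_0$ is unramified in $\Q^{\cyc}K_0/K_0$ and split enough — and one arranges this by choosing, for each $i$, a prime $w_i$ of $\Lp_0$ lying above a prime of $\Q_m$ that splits completely in $\Q^{\cyc}$ over $\Q_m$... but no prime splits completely in $\Z$-extensions. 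So the cleanest route is instead: choose the $w_i$ first, and then \emph{replace} $\Q^{\cyc}$ by a sub-$\hat{\Z}$-extension avoiding the residue characteristics in play.

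Let me restate the argument more carefully, which I think is the intended one. For part (2): suppose $(D,I)$ is realizable over $\Q_{\ell}^{\cyc}$, say by $\Lp_w/K_v$ with $K_v$ finite over $\Q_{\ell}^{\cyc}$. Descend to a finite level: there is a finite extension $F/\Q_{\ell}$ with $F \cdot \Q_{\ell}^{\cyc} = K_v$ (after enlarging, we may assume $F \supset \Q_{\ell,m}$ for suitable $m$), and a Galois extension $E/F$ with $E \cdot \Q_{\ell}^{\cyc} = \Lp_w$ and $\Gal(E/F) \simeq D$ compatibly with $\psi$, and the inertia subgroup of $\Gal(E/F)$ maps onto $I$ provided $F$ absorbs the tame/wild data, i.e.~provided $\Q_{\ell}^{\cyc}/F$ is \emph{unramified} — which it is, once $m$ is large enough that $F$ contains all the ramification of $\Q_{\ell}^{\cyc}/\Q_{\ell}$ that interacts with $E$, because the cyclotomic $\Z_q$-extension of $\Q_\ell$ for $q \neq \ell$ is \emph{unramified}, and the $\Z_\ell$-part is totally ramified but its ramification is exhausted at finite levels inside $F$. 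Hence $\Gal(E/F)$ has decomposition group $D$ and inertia group $I$; since $F$ is a finite extension of $\Q_\ell$, this is exactly realizability of $(D,I)$ over $\Q_\ell$ in the sense of Definition \ref{defn:rea_loc}. The same reasoning gives part (1), with ``finite extension of $\Q_\ell$'' replaced by ``number field'', ``unramified'' replaced by ``the relevant primes are unramified in $\Q^{\cyc}/\Q_m$ for $m \gg 0$'' (for the $q \neq \ell_i$-parts of $\Q^{\cyc}$ this is automatic; for the $\ell_i$-part one enlarges $m$ so that $F$ already contains the full ramification), and one must additionally check the totally-real condition on $K_0$ and $\Lp_0$, which is immediate since these are subfields of $K_v, \Lp_w$ which were totally real, so their real embeddings extend.

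The main obstacle, and the step I would spend the most care on, is the claim that $\Q^{\cyc}/\Q_m$ (resp.~$\Q_\ell^{\cyc}/\Q_{\ell,m}$) is, for $m$ large, \emph{unramified} at the primes below the $w_i$ — which is simply \emph{false} as stated for the $\ell_i$-component, since the cyclotomic $\Z_{\ell_i}$-extension is totally ramified at $\ell_i$. The resolution is that one does not need unramifiedness but only that the ramification of the $\Z_{\ell_i}$-extension \emph{does not grow the inertia group beyond $I$}: concretely, $\Lp'/K'$ and $\Q^{\cyc}K_0/K_0$ are linearly disjoint over $K_0$, so for a prime $w_i'$ of $\Lp'$ over $w_i$ of $\Lp_0$, the inertia group $I_{w_i'}(\Lp'/K')$ equals $I_{w_i'}(\Lp'/K')$; but under the identification $\Gal(\Lp'/K') = \Gal(\Lp_0/K_0)$ this is the image of $I_{w_i'}(\Lp' / K_0) \cap \Gal(\Lp'/K')$, and since the ``extra'' ramification coming from $\Q^{\cyc}$ lives entirely in the $\Gal(\Q^{\cyc}K_0/K_0)$-direction, it is killed by the projection to $\Gal(\Lp_0/K_0)$. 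Making this compatibility of inertia groups under linearly disjoint base change precise (ideally by a clean lemma: if $L_1/K$ Galois and $L_2/K$ are linearly disjoint, then for a prime of $L_1L_2$ the inertia group in $\Gal(L_1L_2/L_2) = \Gal(L_1/K)$ equals the intersection of the inertia group in $\Gal(L_1L_2/K)$ with $\Gal(L_1/K)$, projected appropriately — and equals the inertia in $\Gal(L_1/K)$ of the restricted prime) is the technical heart; everything else is the routine ``spreading out over a finite level'' argument. I would isolate that lemma first, verify it using the standard behaviour of inertia in towers, and then both parts (1) and (2) follow by the descent argument sketched above.
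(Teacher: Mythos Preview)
Your overall strategy --- descend from $\Q^{\cyc}$ (resp.\ $\Q_\ell^{\cyc}$) to a finite level $L_0/K_0$ with $\Gal(L_0/K_0)\simeq\Gamma$, then verify the decomposition and inertia data --- is exactly what the paper does for the first half. The gap is in the second half: neither of the two mechanisms you settle on actually works. Your claim that for large enough $F$ the extension $\Q_\ell^{\cyc}/F$ becomes unramified (``ramification is exhausted at finite levels'') is false, since the cyclotomic $\Z_\ell$-extension of any finite extension of $\Q_\ell$ remains totally ramified. And the lemma you isolate at the end --- that for linearly disjoint $L_1/K$ (Galois) and $L_2/K$, the inertia in $\Gal(L_1L_2/L_2)\simeq\Gal(L_1/K)$ equals the inertia in $\Gal(L_1/K)$ of the restricted prime --- is also false. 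Take $K=\Q$, $L_1=\Q(\sqrt5)$, $L_2=\Q(\sqrt{-5})$, so $L_1L_2=\Q(\sqrt5,i)$: at the prime $5$, the inertia in $\Gal(L_1/\Q)$ is all of $C_2$, but $L_1L_2/L_2=L_2(i)/L_2$ is unramified above $5$ since $5\equiv 1\pmod 4$. In general only the containment $I_{\sW_i}(\cL/\cK)\subseteq I_{w_i}(L_0/K_0)$ holds; the inertia group in $\Gal(\cL/K_0)=\Gamma\times\Gal(\cK/K_0)$ can sit ``diagonally'', so its projection to $\Gamma$ can strictly exceed $I_i$.

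The paper's fix is much simpler than your attempts. After descending to $L_0/K_0$, one is still free to enlarge $K_0$ inside $\cK$ (and $L_0$ inside $\cL$). Choose a tower $K_0\subset K_1\subset\cdots$ with union $\cK$ and set $L_m=L_0K_m$; restriction identifies each $\Gal(L_m/K_m)$ with $\Gamma$. For the primes $w_{i,m}$ of $L_m$ below $\sW_i$, the groups $D_{w_{i,m}}(L_m/K_m)$ and $I_{w_{i,m}}(L_m/K_m)$ form \emph{decreasing} chains of subgroups of the finite group $\Gamma$, and their intersections are exactly $D_i$ and $I_i$ (since $\sW_i$ is the inverse limit of the $w_{i,m}$). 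A decreasing chain of subgroups of a finite group stabilizes, so some $L_m/K_m$ realizes the family. This replaces your false compositum lemma with a one-line finiteness argument.
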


\begin{proof}
We only show (1), because (2) can be shown in a similar (and easier) way.

By assumption, we have a finite Galois extension $\cLp/\cK$ of totally real finite extensions 
of $\Q^{\cyc}$, an isomorphism $\Gal(\cLp/\cK) \simeq \Gamma$, and finite primes 
$\sW_i$ of $\cLp$ ($1 \leq i \leq n$) whose decomposition and inertia groups correspond to $D_i$ and $I_i$.

It is easy to see that there is a finite Galois extension $\Lp/K$ of finite extensions of $\Q$ such that
$\cK = K \Q^{\cyc}$, $\cLp = \Lp \Q^{\cyc}$, and the restriction map gives an isomorphism
$$ \Gal(\cLp/\cK) \simeq \Gal(\Lp/K). $$
Indeed, we first take a finite extension $\Lp/K$ of finite extensions of $\Q$ such 
that $\cK = K \Q^{\cyc}$ and $\cLp = \Lp \Q^{\cyc}$.
We may assume that $\Lp/K$ is Galois by taking the Galois closure.
Then by replacing $K$ by $\Lp \cap \cK$, we obtain the claim.
Note that in this construction we may arbitrarily enlarge $K$ and $\Lp$.

Let $w_i$ be the prime of $\Lp$ lying below $\sW_i$.
By enlarging $\Lp$ and $K$ if necessary, we may assume that the primes of $K$ lying 
below $w_1, \dots, w_n$ are distinct. Moreover, by enlarging $\Lp$ and $K$ again, 
we see that the isomorphism $\Gal(\cLp/\cK) \simeq \Gal(\Lp/K)$ restricts to 
$D_{\sW_i}(\cLp/\cK) \simeq D_{w_i}(\Lp/K)$ and $I_{\sW_i}(\cLp/\cK) \simeq I_{w_i}(\Lp/K)$ for any $i$.
Now using  $\Lp/K$ we obtain the desired realizability.
\end{proof}

    %%%%%%%%%%%%%%%%%%%%%
\subsection{Going up}
    %%%%%%%%%%%%%%%%%%%%%

The following proposition requires restrictive hypotheses, but it will suffice for 
the particular case $\Gamma = C_p \rtimes C_r$, as will be discussed in \S \ref{sec:ex}.

\begin{prop}\label{prop:going_up}
Suppose that $(D, I)$ is realizable by $\Lp_w/K_v$ in finite extensions of $\Q_{\ell}$.
Further suppose the following.
\begin{itemize}
  \item[(a)] The index $(D: I)$ is an $\ell$-power.
  \item[(b)]  If $D \neq I$, then the cyclotomic $\Z_{\ell}$-extension of $K_v$ is totally ramified.
  \item[(c)] The index $(I: [D, D])$ is not divisible by $\ell$, where $[D, D]$ 
denotes the commutator subgroup of $D$.
\end{itemize}
Then $(D, I)$ is realizable by $\Lp_w^{\cyc}/K_v^{\cyc}$, where we set 
$\Lp_w^{\cyc} = \Lp_w \Q_{\ell}^{\cyc}$ and $K_v^{\cyc} = K_v \Q_{\ell}^{\cyc}$.
\end{prop}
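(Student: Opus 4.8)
The goal is to lift the given local extension $\Lp_w/K_v$ over $\Q_\ell$ to one over $\Q_\ell^{\cyc}$ that still realizes the pair $(D,I)$. First I would set $K_v^{\cyc} = K_v\Q_\ell^{\cyc}$ and $\Lp_w^{\cyc} = \Lp_w\Q_\ell^{\cyc}$, and observe that by restriction we get a surjection $\Gal(\Lp_w^{\cyc}/K_v^{\cyc}) \hookrightarrow \Gal(\Lp_w/K_v) \simeq D$; the task is to show this is an isomorphism and that the inertia subgroup still maps onto $I$. The natural obstruction to the map being an isomorphism is that $\Lp_w$ and $\Q_\ell^{\cyc}$ could share a nontrivial subextension of $K_v$. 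I would split $D$ into its ramification pieces: write $I = I_{(\ell)} \rtimes (\text{tame part})$ (using Lemma \ref{lem:real_allow}(2)), and note $D/I$ is cyclic generated by Frobenius. The key point is that $\Q_\ell^{\cyc}/K_v$ decomposes as an unramified part (the $\Z_q$-extensions for $q\neq\ell$, plus possibly a finite unramified piece) and a totally ramified part (the cyclotomic $\Z_\ell$-extension, by hypothesis (b)). Hypothesis (a), that $(D:I)$ is an $\ell$-power, is exactly what forces the \emph{unramified} part of $\Lp_w/K_v$ — whose degree is $(D:I)$ — to be disjoint from the prime-to-$\ell$ unramified $\hat\Z$-extension sitting inside $\Q_\ell^{\cyc}$; the only possible overlap is an unramified $\ell$-extension, but $\Q_\ell^{\cyc}$ contains no unramified $\Z_\ell$-extension (its $\ell$-part is the totally ramified cyclotomic $\Z_\ell$-extension), so at worst a finite unramified $\ell$-piece could intervene, and I would need hypotheses (b)/(c) to rule that out too.

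\textbf{Key steps in order.}
(i) Analyze $\Q_\ell^{\cyc} \cap \Lp_w$ over $K_v$: show it is contained in the unramified part of $\Lp_w/K_v$. For this, note that $\Q_\ell^{\cyc}/\Q_\ell$ is abelian, hence so is $\Q_\ell^{\cyc} K_v/K_v$ inside it; and the inertia of $\Q_\ell^{\cyc}/\Q_\ell$ is the cyclotomic $\Z_\ell$-part, which is \emph{procyclic}. Any ramified subextension of $\Lp_w/K_v$ contained in $\Q_\ell^{\cyc}K_v$ would have to be abelian over $K_v$ with procyclic inertia of $\ell$-power order; hypothesis (c), $\ell \nmid (I : [D,D])$, says the tame (prime-to-$\ell$) part of inertia dies in the abelianization of $D$, and combined with (a) this pins down what abelian ramified quotients $D$ can have. (ii) Use hypothesis (b) to conclude that the ramified cyclotomic $\Z_\ell$-extension of $K_v$ is totally ramified, hence \emph{cannot} be a subextension of the ramified part of $\Lp_w/K_v$ unless it already appears there — but $\Lp_w/K_v$ is a finite extension while $\Z_\ell$-extensions are infinite, so no such overlap. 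Thus $\Q_\ell^{\cyc}\cap\Lp_w$ is unramified over $K_v$ and of degree dividing $(D:I)$, an $\ell$-power. (iii) Rule out even a finite unramified $\ell$-overlap: the maximal unramified extension of $K_v$ inside $\Q_\ell^{\cyc}$ has degree prime to $\ell$ (it is generated by the $\Z_q$'s for $q\neq\ell$ and the residue-degree-increasing part of finite cyclotomic pieces, which one checks is prime-to-$\ell$ after adjusting $K_v$), so the $\ell$-power-degree unramified overlap must be trivial. (iv) Conclude $\Lp_w\cap\Q_\ell^{\cyc}K_v = K_v$, hence $\Gal(\Lp_w^{\cyc}/K_v^{\cyc}) \xrightarrow{\sim} \Gal(\Lp_w/K_v) \simeq D$. (v) Finally, track inertia: since the totally ramified base change $K_v^{\cyc}/K_v$ does not enlarge inertia in a way that escapes $D$ (the cyclotomic $\Z_\ell$-extension of $K_v$ is totally ramified but its interaction with $\Lp_w$ is controlled by (b)), the inertia group of $\Lp_w^{\cyc}/K_v^{\cyc}$ corresponds to $I$ under the identification — more precisely, $I(\Lp_w^{\cyc}/K_v^{\cyc})$ is the image of $I(\Lp_w^{\cyc}/K_v)\cap \Gal(\Lp_w^{\cyc}/K_v^{\cyc})$, and a Herbrand-type / ramification-filtration bookkeeping shows this equals $I$.

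\textbf{The main obstacle.}
The delicate point is step (v), keeping precise control of the inertia group after the totally ramified base change $K_v \to K_v^{\cyc}$. Base-changing along a totally ramified extension can in principle change which subgroup is inertia, and the three hypotheses (a),(b),(c) are tuned exactly to prevent pathologies: (b) guarantees the cyclotomic $\Z_\ell$-direction is "new" ramification not already present in $\Lp_w$, (a) keeps the unramified degree $\ell$-power so it is forced disjoint from the prime-to-$\ell$ cyclotomic tower, and (c) controls the abelianized tame inertia so that the abelian $\Q_\ell^{\cyc}$ cannot absorb any of the ramification of $\Lp_w$. I would organize the inertia computation by passing to the compositum and using that $\Gal(K_v^{\cyc}/K_v)$ is (topologically) $\hat\Z$ with the inertia subgroup the $\Z_\ell$-factor, then arguing that $\Gal(\Lp_w^{\cyc}/K_v^{\cyc})\cap I(\Lp_w^{\cyc}/K_v)$ surjects onto $I$ via the isomorphism of step (iv) — the surjectivity because base change cannot decrease inertia, and the injectivity (equality of orders) because $(D:I)$ is coprime to the residue-field growth contributed by $K_v^{\cyc}/K_v$, which is prime-to-$\ell$ by (a) while $(D:I)$ is an $\ell$-power. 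The rest of the argument is bookkeeping with ramification groups and should go through without surprises.
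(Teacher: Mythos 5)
Your overall strategy --- show that $\Lp_w$ and $K_v^{\cyc}$ are linearly disjoint over $K_v$, then track inertia through the base change --- is in the right spirit, but two of the key steps have genuine gaps, and the cleaner structure of the paper's proof is exactly what avoids them.

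The paper introduces the inertia field $M = \Lp_w^I$ and splits the claim into two pieces:
\textbf{(I)} $M \cap K_v^{\cyc} = K_v$, and
\textbf{(II)} $\Lp_w \cap \wtil{M} = M$ where $\wtil{M}$ is the unramified $\Z_\ell$-extension of $M^{\cyc}$; equivalently, $\Lp_w$ and $M^{\cyc}$ are linearly disjoint over $M$ \emph{and} $\Lp_w^{\cyc}/M^{\cyc}$ is totally ramified.
Once (I) and (II) are in hand, both the Galois-group isomorphism $\Gal(\Lp_w^{\cyc}/K_v^{\cyc}) \simeq D$ \emph{and} the inertia correspondence come for free: $M^{\cyc}/K_v^{\cyc}$ is unramified (base change of the unramified $M/K_v$) and $\Lp_w^{\cyc}/M^{\cyc}$ is totally ramified, so the inertia subgroup of $\Gal(\Lp_w^{\cyc}/K_v^{\cyc})$ is exactly $\Gal(\Lp_w^{\cyc}/M^{\cyc}) \simeq I$. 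Your step (v) leaves this as a vague ``Herbrand-type bookkeeping''; in the paper it is not a separate step but is built into the formulation of (II). This is the main structural advantage you are missing.

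There is also a concrete error in your step (ii). You argue that since the cyclotomic $\Z_\ell$-extension of $K_v$ is infinite while $\Lp_w/K_v$ is finite, ``no such overlap'' is possible. This does not rule out a \emph{finite} ramified $\ell$-subextension of the cyclotomic $\Z_\ell$-tower sitting inside $\Lp_w$, and that is precisely the danger. It is hypothesis (c), not a finite-versus-infinite argument, that rules this out: any ramified subextension of $\Lp_w\cap\wtil{M}$ over $M$ would give a quotient of $I/[D,D]$ (abelian over $K_v$, by abelianness of $\wtil{M}/K_v$), hence of order prime to $\ell$ by (c); but the inertia of $\wtil{M}/M$ is pro-$\ell$, so that quotient must be trivial, i.e.\ $\Lp_w\cap\wtil{M}/M$ is unramified. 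Combined with total ramification of $\Lp_w/M$ this forces $\Lp_w\cap\wtil{M}=M$. Your proposal gestures at (c) but does not isolate this role of it. Similarly, your step (iii)'s ``prime to $\ell$ after adjusting $K_v$'' should simply invoke (b) directly: the cyclotomic $\Z_\ell$-extension of $K_v$ is totally ramified, so the unramified part of $K_v^{\cyc}/K_v$ is pro-prime-to-$\ell$; combined with (a), this is what proves claim (I).

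In short: introduce $M$ and prove (I) and (II) in that order. Both claims are elementary given (a), (b), (c), and the inertia correspondence then needs no further argument.
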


\begin{proof}
We show that  restriction gives an isomorphism 
$$ \Gal(\Lp_w^{\cyc}/K_v^{\cyc}) \simeq \Gal(\Lp_w/K_v),  $$
and moreover this respects the inertia groups.
Let $M$ be the inertia field in $\Lp_w/K_v$.
Then the claim can be divided into two sub-claims (I) and (II):
\begin{itemize}
  \item[(I)] $M$ and $K_v^{\cyc}$ are linearly disjoint over $K_v$.
  \item[(II)]
$\Lp_w$ and $M^{\cyc}$ are linearly disjoint over $M$ and
$\Lp_w^{\cyc}/M^{\cyc}$ is totally ramified.
\end{itemize}

First we show (I), that is, $M \cap K_v^{\cyc} = K_v$.
If $D = I$, then $M = K_v$, so the claim is trivial.
By (a), $M/K_v$ is an $\ell$-extension, so $M \cap K_v^{\cyc}$ is contained 
in the cyclotomic $\Z_{\ell}$-extension of $K_v$.
Then condition (b) implies the claim.

\begin{center}
\begin{tikzpicture}
	\node (F) at (0,0) {$K_v$};
	\node (M) at (2,0.5) {$M$};
	\node (K) at (4,1) {$\Lp_w$};
	\node (Fc) at (0,3) {$K_v^{\cyc}$};
	\node (Mc) at (2,3.5) {$M^{\cyc}$};
	\node (Kc) at (4,4) {$\Lp_w^{\cyc}$};
	\node (Ft) at (0,6) {$\wtil{K_v}$};
	\node (Mt) at (2,6.5) {$\wtil{M}$};
	\node (Kt) at (4,7) {$\wtil{\Lp_w}$};

	\draw (F) --(M);
	\draw (M) --(K);
	\draw (Fc) --(Mc);
	\draw (Mc) --(Kc);
	\draw[double, double distance=0.5mm] (Ft) --(Mt);
	\draw (Mt) --(Kt);
	\draw (F) --(Fc);
	\draw (Fc) --(Ft);
	\draw (M) --(Mc);
	\draw (Mc) --(Mt);
	\draw (K) --(Kc);
	\draw (Kc) --(Kt);
	
	\node[below] at (1, 0.25) {\tiny{unram}};
	\node[below] at (3, 0.75) {\tiny{ram}};
	\node[below] at (1, 3.25) {\tiny{unram}};

	\node[left] at (0, 1.5) {\tiny{$\hat{\Z}$}};
	\node[left] at (2, 2) {\tiny{$\hat{\Z}$}};
	\node[left] at (4, 2.5) {\tiny{$\hat{\Z}$}};

	\node[left] at (0, 4.5) {\tiny{unram $\Z_{\ell}$}};
	\node[left] at (2, 5) {\tiny{unram $\Z_{\ell}$}};
	\node[left] at (4, 5.5) {\tiny{unram $\Z_{\ell}$}};
\end{tikzpicture}
\end{center}

We now show (II). Let $\wtil{K_v}$, $\wtil{M}$, and $\wtil{\Lp_w}$ be the unramified 
$\Z_{\ell}$-extension of $K_v^{\cyc}$, $M^{\cyc} = M \Q_{\ell}^{\cyc}$, and $\Lp_w^{\cyc}$, 
respectively. Then (II) is equivalent to saying that $\Lp_w$ and $\wtil{M}$ are linearly disjoint 
over $M$, that is, $\Lp_w \cap \wtil{M} = M$.
We shall deduce this by showing that $\Lp_w \cap \wtil{M}/M$ is both unramified and totally ramified.
\begin{itemize}
   \item Since $\Lp_w/M$ is totally ramified, the subextension 
$\Lp_w \cap \wtil{M}/M$ is also totally ramified.
   \item Since $M/K_v$ is unramified, we have $\wtil{M} = \wtil{K_v}$, so $\wtil{M}/K_v$ is abelian.
In particular, the subextension $\Lp_w \cap \wtil{M}/K_v$ is also abelian.
This implies that $\Gal(\Lp_w \cap \wtil{M}/M)$ is a quotient of $I/[D, D]$, whose degree 
is prime to $\ell$ by condition (c). Since the degree of the inertia subgroup of $\Gal(\wtil{M}/M)$ 
is $\ell^{\infty}$, we conclude that $\Lp_w \cap \wtil{M}/M$ is unramified.
\end{itemize}
This completes the proof.
\end{proof}

    %%%%%%%%%%%%%%%%%%%%%%%%%%%%%%%
\subsection{From local to global}
    %%%%%%%%%%%%%%%%%%%%%%%%%%%%%%%

A finite group $\Gamma$ (whose unit element is denoted by $e$) is said to be {\it supersolvable} if there is a sequence
$$  \{ e \} = H_0 \subset H_1 \subset \dots \subset H_r = \Gamma,  $$
where each $H_i$ is a normal subgroup of $\Gamma$ and the factors $H_i/H_{i-1}$ 
are cyclic for $i = 1, \dots, r$. For instance, the metacyclic group $\Gamma = C_p \rtimes C_r$ is supersolvable, because we have a chain $\{e\} \subset C_p \subset \Gamma$.
On the other hand, the alternating group $A_4$ is not supersolvable, 
since $A_4$ does not have a nontrivial cyclic normal subgroup.
It is easy to see that all nilpotent groups are supersolvable and, in turn, 
all supersolvable groups are solvable.

For subgroups $D_1, \dots, D_n$ of a finite group $\Gamma$, we say that arbitrary 
conjugates of $D_1, \dots, D_n$ generate $\Gamma$ if, for any given 
$\gamma_1, \dots, \gamma_n \in \Gamma$, the subgroups 
$\gamma_1 D_1 \gamma_1^{-1}, \dots, \gamma_n D_n \gamma_n^{-1}$ generate $\Gamma$.

\begin{thm}\label{thm:main_app}
A family $\{(D_i, I_i)\}_{1 \leq i \leq n}$ of subgroups of $\Gamma$ is realizable 
in finite extensions of $\Q^{\cyc}$ if the following all hold.
\begin{itemize}
   \item  For each $1 \leq i \leq n$, the pair $(D_i, I_i)$ is realizable
 in finite extensions of $\Q_{\ell_i}^{\cyc}$ for some prime number $\ell_i$.
   \item $\Gamma$ is a supersolvable group.
\item Arbitrary conjugates of $D_1, \dots, D_n$ generate $\Gamma$.
\end{itemize}
\end{thm}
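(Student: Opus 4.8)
The plan is to realize the family $\{(D_i,I_i)\}$ by an explicit inductive construction along a supersolvable chain, using the flexibility afforded by working over $\Q^{\cyc}$ (which makes tame ramification data easy to prescribe via Grunwald--Wang-type arguments, and which kills most obstructions coming from the cyclotomic $\hat\Z$-extensions). First I would set up notation: fix a chief series $\{e\}=H_0\subset H_1\subset\cdots\subset H_m=\Gamma$ with each $H_k$ normal in $\Gamma$ and $H_k/H_{k-1}$ cyclic, and induct on $m$. The base case $m=0$ is trivial, and for $m=1$ the group $\Gamma$ is cyclic, where the statement follows from a Grunwald--Wang argument over $\Q^{\cyc}$: one prescribes for each $i$ a local extension of $\Q_{\ell_i}^{\cyc}$ with the right decomposition/inertia datum and patches to a global cyclic extension of a suitable totally real base field, using distinct auxiliary primes to separate the $w_i$.

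For the inductive step, let $N=H_1$, a cyclic normal subgroup of $\Gamma$, and let $\bar\Gamma=\Gamma/N$, which is again supersolvable with a shorter chief series. One first pushes the data down: the images $(\bar D_i,\bar I_i)$ of $(D_i,I_i)$ in $\bar\Gamma$ are still locally realizable over $\Q_{\ell_i}^{\cyc}$ (quotients of locally realizable pairs are locally realizable, by Lemma \ref{lem:real_allow} and Remark \ref{rem:nec_loc}), and arbitrary conjugates of the $\bar D_i$ still generate $\bar\Gamma$. By the inductive hypothesis I obtain a totally real Galois extension $\cLp_0/\cK$ over $\Q^{\cyc}$ with $\Gal(\cLp_0/\cK)\simeq\bar\Gamma$ and primes $\bar w_i$ realizing $(\bar D_i,\bar I_i)$. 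The heart of the argument is then the \emph{lifting} step: one must construct a Galois extension $\cLp/\cK$ with $\Gal(\cLp/\cK)\simeq\Gamma$, restricting to $\cLp_0$ over the quotient $\bar\Gamma$, and whose decomposition/inertia groups at primes above $\bar w_i$ are exactly $D_i, I_i$. This is an embedding problem with cyclic kernel $N$; I would solve it by first solving the corresponding local embedding problems at the primes above each $\bar w_i$ (here the hypothesis that each $(D_i,I_i)$ is locally realizable over $\Q_{\ell_i}^{\cyc}$ is used, together with the fact that over $\Q_{\ell}^{\cyc}$ the relevant $H^2$ obstructions and the cyclotomic part of $H^1$ are under control), then invoking the solvability of embedding problems with solvable kernel over number fields (a theorem of Shafarevich, or more precisely the cohomological machinery for embedding problems with abelian kernel) to patch these local solutions into a global one, after possibly enlarging $\cK$ to kill a global obstruction in a Brauer group or a Shafarevich group.

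The main obstacle I expect is precisely this last patching: ensuring that the chosen local solutions at the $\bar w_i$, together with unramified-everywhere-else behavior, assemble into a global solution of the embedding problem \emph{without destroying} the already-arranged decomposition/inertia data at the other primes, and \emph{while keeping the top field totally real}. The condition that arbitrary conjugates of the $D_i$ generate $\Gamma$ is what guarantees that the global extension has the full group $\Gamma$ as Galois group (rather than a proper subgroup) no matter how the various local pieces are glued, so it should enter exactly at the point where one checks surjectivity of the map to $\Gamma$; the supersolvability is what keeps all kernels cyclic so that the embedding problems are always solvable after enlarging the base. I would handle the totally-real constraint by prescribing split (i.e.\ totally real, unramified) behavior at the archimedean places throughout, which is an admissible local condition in the Grunwald--Wang/embedding-problem framework. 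The routine but somewhat lengthy verifications — that the inductive hypotheses transfer to $\bar\Gamma$, that enlarging $\cK$ does not spoil realizability of the family (cf.\ the remark following Definition \ref{defn:rea_gl}), and that the primes $w_i$ can be kept lying over distinct primes of $K$ — I would carry out as in the abelian case \cite[Theorem 4.2]{GK2}, adapted mutatis mutandis.
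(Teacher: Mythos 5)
Your plan has the right skeleton --- induction along a supersolvable chain, Grunwald--Wang at the cyclic base, lifting by an embedding problem with cyclic kernel --- and this is indeed the combinatorics underlying the paper's proof. But the organization differs in a way that matters, and there is a genuine gap in the step you flag as ``the main obstacle.''

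The paper fixes the global base once and for all: Proposition~\ref{prop:Kras} produces a totally real $\cK/\Q^{\cyc}$ with the prescribed completions $\cK_{\sV_i}$ by Krasner plus weak approximation, and from that point on $\cK$ and the $\sV_i$ never move. Realizability is then reduced (Proposition~\ref{prop:lift}) to surjectivity of the restriction map $H^1(\cG_{\cK},\Gamma)\to\prod_i H^1(\cG_{\cK_{\sV_i}},\Gamma)$ of nonabelian cohomology pointed sets; your ``arbitrary conjugates generate'' hypothesis is used exactly there, to guarantee that a class in $H^1(\cG_{\cK},\Gamma)$ mapping to the prescribed local classes is represented by a \emph{surjective} homomorphism $\cG_{\cK}\to\Gamma$. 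The supersolvable induction happens entirely inside the proof of that surjectivity (Proposition~\ref{prop:surj}), not at the level of constructing successive global field extensions. This is why the paper never needs to ``enlarge $\cK$'' mid-construction, which in your version would invalidate the base-case field and the chosen primes --- the step you yourself identify as problematic.

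The missing ingredient in your proposal is what makes the paper's route work: the bound $\cd_p\,\cG_{\cK}\le 1$ for every $p$ (Proposition~\ref{prop:cd_1}, proved in the appendix), valid because $\cK$ contains $\Q^{\cyc}$ and one only allows ramification at finite places. By Serre's Proposition~46, $\cd\,\cG_{\cK}\le 1$ forces $H^1(\cG_{\cK},\Gamma)\to H^1(\cG_{\cK},\Gamma/H)$ to be surjective for a normal $H$, i.e.\ the $H^2$-obstruction to your embedding problem vanishes outright --- no Shafarevich, no Brauer-group bookkeeping, no enlargement. The remaining freedom to adjust a lift so that it matches the prescribed local classes at the $\sV_i$ is a torsor under $H^1(\cG_{\cK},{}_{x}H)$, and the needed local surjectivity there is the Grunwald--Wang input (Lemma~\ref{lem:surj_H1} for prime-order coefficients). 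Appealing instead to solvability of embedding problems ``after possibly enlarging $\cK$'' is where your argument would break: Shafarevich-type results do not by themselves give local control at prescribed primes, and enlarging $\cK$ retroactively destroys the data you already arranged. You should prove the cohomological dimension bound and route the induction through the $H^1$ surjectivity rather than through a tower of global extensions.
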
 

The rest of this section is devoted to the proof of this theorem.
Indeed, the theorem will follow directly by combining Propositions \ref{prop:Kras}, 
\ref{prop:lift}, \ref{prop:surj}, and \ref{prop:cd_1}. 

A first step toward showing Theorem \ref{thm:main_app} is the following.

\begin{prop}\label{prop:Kras}
Given a family of finite extensions $K_{v_i}/\Q_{\ell_i}$ (all $\ell_i$ are prime numbers) 
for $1 \leq i \leq n$, there is a totally real finite extension $K/\Q$ and distinct finite 
primes $v_1, \dots, v_n$ of $K$ with the prescribed completions. --- 
The corresponding statement also holds over $\Q^{\cyc}$ instead of $\Q$.
\end{prop}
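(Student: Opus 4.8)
The plan is to prove Proposition \ref{prop:Kras} by a standard application of the Grunwald–Wang theorem (or, at the elementary level needed here, by weak approximation / Krasner's lemma combined with the primitive element theorem). First I would reduce to the case $n=1$ augmented by a separate ``distinctness'' bookkeeping step: the issue is not just to realize each completion but to do so at \emph{distinct} primes of a \emph{single} field $K$. Concretely, for each $i$ write $K_{v_i} = \Q_{\ell_i}(\alpha_i)$ for a primitive element $\alpha_i$ with minimal polynomial $f_i(X) \in \Q_{\ell_i}[X]$. By Krasner's lemma, any polynomial $\widetilde{f_i}(X) \in \Q[X]$ of the same degree whose coefficients are $\ell_i$-adically sufficiently close to those of $f_i$ has a root generating the same extension $K_{v_i}/\Q_{\ell_i}$; such a $\widetilde{f_i}$ exists because $\Q$ is dense in $\Q_{\ell_i}$.

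Next I would assemble these into one number field. Choose for each $i$ an irreducible polynomial $g_i(X) \in \Q[X]$ with a root generating a field $F_i/\Q$ that has a completion isomorphic to $K_{v_i}/\Q_{\ell_i}$ at some prime above $\ell_i$ (take $g_i = \widetilde{f_i}$, after replacing it by an irreducible factor whose completion is the right one — this is where Krasner is used). Then let $K$ be the compositum $F_1 \cdots F_n$, or rather the field generated by a generic $\Q$-linear combination of the $\alpha_i$'s, chosen so that $K$ is a field containing each $F_i$. For each $i$ pick a prime $v_i$ of $K$ above $\ell_i$ lying over the chosen prime of $F_i$; since $K_{v_i} \supseteq (F_i)_{w_i} \cong K_{v_i}$ and the latter already has the full local degree, the completion $K_{v_i}$ is exactly the prescribed one. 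To make the $v_i$ lie over \emph{distinct} primes of nothing — wait, they already lie over distinct rational primes when the $\ell_i$ are distinct; when some $\ell_i$ coincide one must instead separate them inside the single prime $\ell$, which is done by a further weak-approximation perturbation of the $g_i$ ensuring the corresponding local factors at $\ell$ are inequivalent and hence correspond to distinct primes of $K$. Finally, to guarantee $K$ is \emph{totally real}, impose additionally at the archimedean place that each $g_i$ be close to a totally split real polynomial, i.e. has all real roots; this is again an open condition compatible with the $\ell_i$-adic ones, so weak approximation in $\Q$ (density of $\Q$ in $\R \times \prod \Q_{\ell_i}$) lets us satisfy all conditions simultaneously.

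For the variant over $\Q^{\cyc}$: here the base is an infinite extension, but one only needs finite extensions of $\Q^{\cyc}$, so I would first realize the local data over $\Q$ as above to get $K/\Q$ totally real with completions $K_{v_i}/\Q_{\ell_i}$, and then take $K^{\cyc} = K\Q^{\cyc}$. Since $\Q^{\cyc}/\Q$ is unramified at — no, it is ramified at every prime, but it is abelian and its completion at $\ell_i$ is $\Q_{\ell_i}^{\cyc}$; the completion of $K^{\cyc}$ at a prime above $v_i$ is then $K_{v_i}\Q_{\ell_i}^{\cyc} = K_{v_i}^{\cyc}$ as desired, and $K^{\cyc}$ is totally real since both $K$ and $\Q^{\cyc}$ are. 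One should check the primes above the various $v_i$ remain distinct in $K^{\cyc}$, which is automatic since they already lie over distinct primes downstairs (or were separated there).

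The main obstacle is bookkeeping rather than depth: ensuring simultaneously (i) the correct $\ell_i$-adic local behavior via Krasner, (ii) total reality via the archimedean condition, and (iii) distinctness of the chosen primes $v_i$ — especially in the degenerate case $\ell_i = \ell_j$ where two prescribed completions sit over the same rational prime and must be pried apart inside $K$. All three are open conditions in a suitable adelic topology on polynomial coefficients, so weak approximation closes the argument; the only care needed is to verify these conditions are genuinely compatible (nonempty intersection), for which it suffices that at each place the prescribed local condition is attainable by \emph{some} real/$\ell_i$-adic polynomial of the fixed degree, which it is by construction.
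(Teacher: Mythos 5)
Your overall strategy is the same as the paper's (Krasner's lemma plus weak approximation), and your treatment of the $\Q^{\cyc}$ variant by base change from the $\Q$ case matches the paper's one-line remark. However, the specific way you assemble the global field has a gap. You build fields $F_i/\Q$ realizing the $i$-th local datum and then set $K = F_1\cdots F_n$. Taking the compositum can inflate the local degree: at a prime of $K$ lying above your chosen prime $w_i$ of $F_i$, the completion is in general $(F_i)_{w_i}$ composed with whatever $F_j$ ($j\neq i$) contributes at $\ell_i$, and this can be strictly larger than the prescribed $K_{v_i}$. Your justification ``since $K_{v_i}\supseteq (F_i)_{w_i}\cong K_{v_i}$ and the latter already has the full local degree'' is circular --- it presupposes the completion of $K$ equals $K_{v_i}$, which is the thing to be checked. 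To repair this you would have to additionally impose, for each $j\neq i$, that $F_j$ has a degree-one prime over $\ell_i$ and that your chosen prime of $K$ lies over it; and in the case $\ell_i=\ell_j$ the phrase ``a further weak-approximation perturbation ensuring the corresponding local factors at $\ell$ are inequivalent'' needs to be made precise: you must force a single number field to have two \emph{specified} (not just distinct) completions at two distinct primes above the same $\ell$.

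The paper avoids all this bookkeeping with a cleaner reduction: first choose a totally real number field $E/\Q$ together with distinct finite primes $u_1,\dots,u_n$ of $E$, each of absolute degree one over $\ell_i$, so that $E_{u_i}\cong\Q_{\ell_i}$. Viewing each $K_{v_i}$ as a finite extension of $E_{u_i}$, one then constructs $K/E$ by Krasner and approximation, prescribing the behavior at each $u_i$ separately (and split/real behavior elsewhere). Because the $u_i$ are already distinct primes of $E$ --- even when some $\ell_i$ coincide --- the distinctness and the non-interference of the local conditions come for free, and the degree-inflation problem never arises since all conditions are imposed in one step on a single extension of $E$. This is the move your argument is missing; with it, the rest of your proof (archimedean condition for total reality, descent/compositum argument for the $\Q^{\cyc}$ case) goes through.
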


\begin{proof}
First we take a totally real number field $E$ that has distinct $\ell_i$-adic primes $u_i$ 
of degree $1$, i.e., $E_{u_i} \simeq \Q_{\ell_i}$.
We regard $K_{v_i}$ as a finite extension of $E_{u_i}$.
Then $K$ can be constructed as an extension of $E$ by using Krasner's lemma 
and the approximation theorem.

The final claim for $\Q^{\cyc}$ can be deduced from the claim for $\Q$.
\end{proof}

To prove Theorem \ref{thm:main_app}, it is convenient to use nonabelian Galois cohomology.
A good reference is Serre \cite[Chapter I, \S 5]{Ser02}.
In general, nonabelian Galois cohomology sets $H^i(\cG, \Gamma)$ involve a 
so-called coefficient group $\Gamma$ which need no longer be abelian.
Unless $\Gamma$ is abelian, we will only use the degree $i = 1$ and the case 
where the action of $\cG$ on $\Gamma$ is trivial.
The concrete definition in this case is as follows.

\begin{defn}
Let $\cG$ be any profinite group. Let $\Gamma$ be a (not necessarily abelian, 
discrete) group on which $\cG$ acts trivially. Then we put
\[
H^1(\cG, \Gamma) = \Hom(\cG, \Gamma)/\sim,
\]
where $\Hom$ denotes the set of continuous group homomorphisms 
and $f \sim f'$ if there is $\gamma \in \Gamma$ such that
\[
f(x) = \gamma^{-1} f'(x) \gamma,
\quad
x \in \cG.
\]
Note that $H^1(\cG, \Gamma)$ is not a group but only a pointed set; the distinguished 
point is the class of the constant homomorphism whose value is the unit element of $\Gamma$.
\end{defn}

\begin{prop}\label{prop:lift}
Let $K/\Q$ be a totally real finite extension and $v_1, \dots, v_n$ be distinct finite primes of $K$.
A family $\{(D_i, I_i)\}_{1 \leq i \leq n}$ of subgroups of $\Gamma$ is realizable by 
an extension of $K$ if the following hold.
\begin{itemize}
   \item For each $1 \leq i \leq n$, the pair $(D_i, I_i)$ is realizable 
by an extension of the completion $K_{v_i}$.
   \item The restriction map
$$ H^1(\cG_K, \Gamma) \to \prod_{i=1}^n H^1(\cG_{K_{v_i}}, \Gamma)  $$
is surjective, where $\cG_K$ (resp.~$\cG_{K_{v_i}}$) denotes the Galois group of the 
maximal totally real extension of $K$ (resp.~the absolute Galois group of $K_{v_i}$).
Here, we implicitly fix an embedding of the global field into the local field, which 
defines an injective homomorphism $\cG_{K_{v_i}} \hookrightarrow \cG_K$.
   \item  Arbitrary conjugates of $D_1, \dots, D_n$ generate $\Gamma$.
\end{itemize}
The corresponding statement also holds over $\Q^{\cyc}$ instead of $\Q$.
\end{prop}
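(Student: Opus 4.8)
The plan is to reduce global realizability to a statement about lifting a given collection of local homomorphisms to a single global one, and then to verify that such lifting is possible using the surjectivity hypothesis. First I would fix, for each $i$, a finite Galois extension $\Lp_{w_i}/K_{v_i}$ realizing $(D_i, I_i)$, which by definition gives a surjection $f_i^{\loc}\colon \cG_{K_{v_i}} \twoheadrightarrow \Gal(\Lp_{w_i}/K_{v_i}) \xrightarrow{\sim} D_i \hookrightarrow \Gamma$, and I would view its class in $H^1(\cG_{K_{v_i}}, \Gamma)$. The surjectivity hypothesis then produces a continuous homomorphism $f\colon \cG_K \to \Gamma$ whose restriction to each $\cG_{K_{v_i}}$ agrees, up to $\Gamma$-conjugacy, with $f_i^{\loc}$. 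Replacing each local embedding $\cG_{K_{v_i}} \hookrightarrow \cG_K$ by a conjugate (equivalently, replacing $w_i$ by a conjugate prime), we may assume $f|_{\cG_{K_{v_i}}} = \gamma_i f_i^{\loc} \gamma_i^{-1}$ for suitable $\gamma_i \in \Gamma$; after one more adjustment we can even arrange $f|_{\cG_{K_{v_i}}} = f_i^{\loc}$ on the nose for each $i$, at the cost of passing to conjugate primes.

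Next I would argue that $f$ is surjective. Its image contains each conjugate $\gamma_i D_i \gamma_i^{-1}$ (or simply each $D_i$, after the normalization above), and by the hypothesis that arbitrary conjugates of $D_1, \dots, D_n$ generate $\Gamma$, the image is all of $\Gamma$; note this uses the conjugates precisely because we had freedom in choosing the $\gamma_i$. Thus $f$ cuts out a Galois extension $\Lp/K$ inside the maximal totally real extension of $K$, with $\Gal(\Lp/K) \xrightarrow{\sim} \Gamma$ via $f$, and both $\Lp$ and $K$ totally real by construction of $\cG_K$. Finally, for each $i$, choose the prime $w_i$ of $\Lp$ corresponding to the fixed embedding $\cG_{K_{v_i}} \hookrightarrow \cG_K$: then $\Lp_{w_i} = \Lp \cdot (\text{fixed field})$ and the decomposition and inertia groups of $w_i$ are exactly the images under $f$ of $\cG_{K_{v_i}}$ and of its inertia subgroup, which by the local realization are $D_i$ and $I_i$ respectively. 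Enlarging $K$ slightly if needed, the primes $v_1, \dots, v_n$ below the $w_i$ are distinct (this is automatic once we have chosen distinct $v_i$ at the start, which Proposition \ref{prop:Kras} permits).

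For the $\Q^{\cyc}$ version, the argument is verbatim the same: one works with $\cG_K$ the Galois group of the maximal totally real extension of a finite extension $K$ of $\Q^{\cyc}$, invokes the $\Q^{\cyc}$-form of the surjectivity hypothesis (which is part of the statement), and the local inputs are extensions of completions $K_{v_i}$, which are now finite extensions of some $\Q_{\ell_i}^{\cyc}$. Nothing in the gluing argument used that the base is $\Q$.

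The step I expect to be the main obstacle — or at least the one requiring the most care — is the passage from ``$f|_{\cG_{K_{v_i}}}$ is conjugate to $f_i^{\loc}$'' to a clean statement about decomposition and inertia groups of an actual prime of $\Lp$. One must be careful that conjugating the local homomorphism corresponds exactly to changing the choice of prime $w_i$ above $v_i$, and that the inertia subgroup of $\cG_{K_{v_i}}$ maps to the inertia group of $w_i$ under $f$; both are standard but need the embedding $\cG_{K_{v_i}} \hookrightarrow \cG_K$ to be compatibly chosen. A secondary subtlety is that $H^1(\cG_{K_{v_i}}, \Gamma)$ is only a pointed set, so ``surjectivity of the restriction map'' must be read as: every tuple of classes is hit; this is exactly what the hypothesis asserts, and it is all we need since we never compose or invert in these cohomology sets.
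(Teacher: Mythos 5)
Your proposal follows essentially the same route as the paper's proof: use the surjectivity hypothesis on $H^1$ to lift the collection of local homomorphisms to a global homomorphism $f\colon \cG_K \to \Gamma$, use the generating-conjugates hypothesis to deduce surjectivity of $f$, and then read off decomposition and inertia groups of the resulting extension $\Lp/K$. The one genuine issue is the order of operations around your ``normalization'' step. You propose to arrange $f|_{\cG_{K_{v_i}}} = \psi_i$ on the nose (by conjugating the embeddings $\cG_{K_{v_i}} \hookrightarrow \cG_K$) \emph{before} establishing surjectivity of $f$. But conjugating the embedding by $\sigma \in \cG_K$ changes $f|_{\cG_{K_{v_i}}}$ by conjugation by $f(\sigma)$; to undo the given conjugation by $\gamma_i \in \Gamma$, you would need $\gamma_i$ to lie in the image of $f$, and that is precisely what surjectivity provides. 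As written, this is a chicken-and-egg problem.

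The paper resolves it by doing things in the opposite order: first $\psi$ is shown to be surjective (which, as you also correctly observe, goes through without any normalization, since the image of $\psi$ contains the unknown conjugate $\gamma_i D_i \gamma_i^{-1}$ and the third hypothesis allows \emph{arbitrary} conjugates); only then does one choose $\sigma_i \in \Gal(\Lp/K)$ with $\psi(\sigma_i) = \gamma_i$ and pass to the conjugate prime $w_i = \sigma_i^{-1}(w_i')$, for which $\psi(D_{w_i}) = D_i$ and $\psi(I_{w_i}) = I_i$ hold on the nose. Since your sketch already contains all the needed ingredients (including the remark that surjectivity works with conjugates, and the observation that $H^1$ is only a pointed set so surjectivity means every tuple is hit), the fix is purely a reordering: defer the normalization/prime-moving step until after surjectivity is in hand, then conclude as you describe. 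Your remark that the $\Q^{\cyc}$ case is verbatim identical also matches the paper.
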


\begin{proof}
We only show the claim for $\Q$, because the claim for $\Q^{\cyc}$ 
can be shown in the same way.

By the first assumption, the pair $(D_i, I_i)$ is realizable by an extension $\Lp_{w_i}/K_{v_i}$.
Let
$$ \psi_i: \cG_{K_{v_i}} \twoheadrightarrow \Gal(\Lp_{w_i}/K_{v_i}) \simeq D_i \hookrightarrow \Gamma  $$
be the induced homomorphism.

By the second assumption, there is a homomorphism $\psi: \cG_K \to \Gamma$ such 
that $\psi|_{\cG_{K_{v_i}}} \sim \psi_i$ in $H^1(\cG_{K_{v_i}}, \Gamma)$.
This means that there exists an element $\gamma_i \in \Gamma$ such that
$$ \psi(x) = \gamma_i \psi_i(x) \gamma_i^{-1}  $$
for any $x \in \cG_{K_{v_i}}$.
In particular, the image of $\psi$ contains $\gamma_i D_i \gamma_i^{-1}$.
By using the third assumption, this implies that $\psi$ is surjective.
Therefore, declaring $\Lp$ to be the fixed field of the kernel of $\psi$, 
we obtain an isomorphism $\Gal(\Lp/K) \simeq \Gamma$, which is also denoted by $\psi$.

Let $w_i'$ be the prime of $\Lp$ that corresponds to the fixed embedding defining 
$\cG_{K_{v_i}} \hookrightarrow \cG_K$. We write $D_{w_i'} = D_{w_i'}(\Lp/K)$ for the 
decomposition subgroup in $\Gal(\Lp/K)$.
Then 
$$ D_i = \psi_i(\cG_{K_{v_i}}) 
= \gamma_i^{-1} \psi(D_{w_i'}) \gamma_i
= \psi(\sigma_i^{-1} D_{w_i'} \sigma_i),  $$
where $\sigma_i \in \Gal(\Lp/K)$ is the element such that $\psi(\sigma_i) = \gamma_i$.
Then setting $w_i = \sigma_i^{-1}(w_i')$, we obtain $D_i = \psi(D_{w_i})$.
In the same way, we also have $I_i = \psi(I_{w_i})$.
\end{proof}

To apply Proposition \ref{prop:lift}, we have to establish the surjectivity of the restriction 
map between the global and local $H^1$ terms. Note that when $\Gamma$ is abelian
and the order of $\Gamma$ is not divisible by $4$, then the surjectivity is a special case 
of the Grunwald--Wang theorem (\cite[Theorem (9.2.3)(ii)]{NSW08}).
Our result in the nonabelian cases will be Proposition \ref{prop:surj} below.
The proof uses a kind of Grunwald--Wang theorem in induction steps, which is stated 
as Lemma \ref{lem:surj_H1} in advance.

For more generality, in what follows, the global fields are no longer assumed to be totally real but we impose an allowability in the following sense.

\begin{defn}\label{defn:allow_ext}
For a number field $K$, let $K_f/K$ be the maximal algebraic extension that is unramified 
at all infinite places and set $\cG_K = \Gal(K_f/K)$ (the subscript $f$ is put to mean 
``ramifications only at finite primes''). We introduce an ad hoc notion: 
An algebraic extension $\cK$ of $\Q$ is said to be {\it allowable} if $\cK$ is an 
intermediate field of $K_f/K$ for some number field $K$.
For an allowable algebraic extension $\cK/\Q$, we also define $\cG_{\cK} = \Gal(\cK_f/\cK)$ 
in the same way. Any totally real field $\cK$ is allowable and then $\cG_{\cK}$ is the same 
as the one in Proposition \ref{prop:lift}.
\end{defn}

\begin{lem}\label{lem:surj_H1}
Let $\cK$ be an allowable algebraic extension of $\Q$.
Let $\sV_1, \dots, \sV_n$ be distinct finite primes of $\cK$.
Let $A$ be a $\cG_{\cK}$-module whose order is a prime number.
Then the restriction map
$$ H^1(\cG_{\cK}, A) \to \prod_{i=1}^n H^1(\cG_{\cK_{\sV_i}}, A)   $$
is surjective.
\end{lem}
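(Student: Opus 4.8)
The statement is a Grunwald--Wang-type surjectivity for a module $A$ of prime order $\ell$, and the plan is to reduce it to the classical Poitou--Tate machinery. First I would reduce to the case $\cK$ a number field: an allowable $\cK$ is by definition an intermediate field $K \subset \cK \subset K_f$, and since $H^1(\cG_{\cK}, A) = \varinjlim_{K'} H^1(\cG_{K'}, A)$ over finite subextensions $K'/K$ inside $\cK$ (and likewise locally), a class in the product on the right comes from some finite level, so it suffices to prove surjectivity for number fields, at the cost of possibly enlarging the base. Having done that, enlarge $K$ once more so that $A$ becomes a \emph{trivial} $\cG_K$-module (adjoin the relevant roots; as $\#A = \ell$ this is a cyclotomic twist question and is harmless) and so that $\mu_\ell \subset K$; then $A \cong \Z/\ell$ as a Galois module and $A^\vee \cong \mu_\ell$.

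Next I would invoke the nine-term Poitou--Tate exact sequence for the finite module $A$ over $K$, restricted to ramification at finite places only (this is exactly why $\cG_K$ was defined via $K_f$ in Definition \ref{defn:allow_ext}; the infinite places are already ``unramified away'', so there is no contribution from the real places, which is the classical subtlety in Grunwald--Wang that makes $\ell = 2$ special — here it simply does not arise). The relevant portion reads
\[
H^1(\cG_K, A) \to \prod_{v} H^1(\cG_{K_v}, A) \to H^1(\cG_K, A^\vee)^\vee \to H^2(\cG_K, A) \to \cdots,
\]
the product being restricted (almost all components land in the unramified subgroup). The cokernel of the global-to-local restriction on $H^1$ injects into the dual of $H^1(\cG_K, A^\vee) = H^1(\cG_K, \mu_\ell)$. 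To get surjectivity onto the \emph{finite} product $\prod_{i=1}^n H^1(\cG_{K_{\sV_i}}, A)$, I would use that the image of the restricted product (over all $v$) surjects onto any finite sub-product up to the Poitou--Tate obstruction, and then kill that obstruction by a further controlled enlargement of $K$: choose an auxiliary place $\sV_0 \notin \{\sV_1,\dots,\sV_n\}$ and prescribe the local behaviour there so as to annihilate the pairing against $H^1(\cG_K,\mu_\ell)$; concretely one shows that the map $H^1(\cG_K, \mu_\ell) \to H^1(\cG_{K_{\sV_0}}, \mu_\ell)$ can be made injective for a suitable $\sV_0$ (Chebotarev), which forces the obstruction to vanish on the $n$-fold product. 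This is the standard proof of Grunwald--Wang in the ``special case excluded'' form, and it goes through verbatim because $\ell$ is odd or, if $\ell = 2$, because there are no real-place contributions to worry about in $\cG_{\cK}$.

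The main obstacle is bookkeeping rather than conceptual: one must be careful that the successive enlargements of the number field $K$ (to trivialize $A$, to control $\mu_\ell$, to introduce the auxiliary place) do not disturb the local conditions already imposed at $\sV_1,\dots,\sV_n$ — this is handled by the usual linear-disjointness / Chebotarev argument, choosing the auxiliary place to split in the fields generated by the previous data. I expect the cleanest write-up to cite \cite[Theorem (9.2.2)]{NSW08} (the abstract Grunwald--Wang / Poitou--Tate statement) and verify its hypotheses in our setting, noting explicitly that the ``special case'' does not occur because $\cG_{\cK}$ ignores the archimedean places. A secondary point worth a sentence: the order of $A$ being exactly a prime is used so that $A$ and $A^\vee$ are both simple, which is what lets the enlargement genuinely trivialize the module; the general finite-module case would be more delicate but is not needed here.
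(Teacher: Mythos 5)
Your reduction to number fields is fine, but after that there is a genuine gap. You propose to ``enlarge $K$ once more so that $A$ becomes a trivial $\cG_K$-module,'' i.e.\ pass to $K'' = K'(A)$. But $K''$ lies outside $\cK$, and proving surjectivity of the restriction map over $K''$ does not, by itself, give surjectivity over $K'$ or over $\cK$: you have produced a global class over a \emph{bigger} field, not over the one you started with. Some descent mechanism is required, and you never supply one. The paper's proof is organized around exactly this point: since $A$ has prime order $q$, the group $\Gal(\cK(A)/\cK)$ embeds into $\mathrm{Aut}(A) \cong (\Z/q\Z)^{\times}$ and hence has order coprime to $q$. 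By Hochschild--Serre, whose higher terms vanish for coprimality reasons, $H^1(\cG_{\cK}, A) \cong H^1(\cG_{\cK(A)}, A)^{\Gal(\cK(A)/\cK)}$, and similarly at the local places; and because taking $\Gal(\cK(A)/\cK)$-invariants is exact on $q$-torsion modules, the trivial-module surjectivity over $\cK(A)$ descends to $\cK$. This is where ``$\#A$ is a prime number'' is actually used in the proof --- not, as you suggest, so that $A$ and $A^{\vee}$ are simple. Your ``linear disjointness / Chebotarev'' remark concerns compatibility of local conditions under enlargement, which is a separate matter and does not replace the missing descent step.

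A smaller but substantive point: you attribute the avoidance of the Grunwald--Wang ``special case'' to the absence of real-place contributions in $\cG_{\cK}$. That is not the reason. The special case is the $\mu_{2^{s+1}}$-cyclicity phenomenon (related to the field $\Q(\mu_{2^{s}})$) and can occur even over totally imaginary fields; it is avoided here simply because $\#A$ is prime, hence not divisible by $4$, which is what the paper states when invoking \cite[Theorem (9.2.3)(ii)]{NSW08}. The real places do enter the paper's proof, but in a different role: the cited theorem yields a class in the absolute cohomology $H^1(K, A)$ with prescribed restrictions at the $\sV_i$ \emph{and} trivial restriction at every real place, and the latter condition is precisely what makes the class come by inflation from $H^1(\cG_K, A)$, since $\cG_K$ is the quotient of the absolute Galois group corresponding to the maximal extension unramified at all archimedean places. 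Finally, the paper cites the trivial-module Grunwald--Wang statement directly, whereas you propose to re-derive it via Poitou--Tate and an auxiliary place; that route is legitimate but amounts to reproving the theorem one would otherwise cite, and in any case does not supply the missing descent.
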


\begin{proof}
First suppose that $\cK$ is a number field and the action of $\cG_{\cK}$ on $A$ is trivial.
To clarify the situation, write $K = \cK$ and $v_i = \sV_i$.
Then by \cite[Theorem (9.2.3)(ii)]{NSW08}, the restriction map
\[
H^1(K, A) \to \prod_{i=1}^n H^1(K_{v_i}, A) \times \prod_{\text{$v$: real}} H^1(K_v, A)
\]
is surjective, where $v$ runs over the real places of $K$ and we are considering the cohomology of the absolute Galois groups (we are not in 
``the special case'' since the order of $A$ is not divisible by $4$).
Since the kernel of $H^1(K, A) \to \prod_{\text{$v$: real}} H^1(K_v, A)$ is 
equal to the image of the inflation map from $H^1(\cG_K, A)$, we obtain the claim.

By using the assumption that $\cK$ is allowable, taking the direct limit shows 
that the lemma is valid as long as the action of $\cG_{\cK}$ on $A$ is trivial.
Let us show that the general case can be reduced to this case.

Let $\cK(A)/\cK$ be the extension cut out by $A$.
By the assumption that $A$ is isomorphic to $\Z/q\Z$ as an abstract group for
 a prime number $q$, the Galois group $\Gal(\cK(A)/\cK)$ is isomorphic to a subgroup of $(\Z/q\Z)^{\times}$.
In particular, the order of $\Gal(\cK(A)/\cK)$ is prime to the order of $A$.
Note that $\cK(A)$ is also allowable since $\cK(A)/\cK$ is a finite extension.
Then by the established case, the restriction map
$$ H^1(\cG_{\cK(A)}, A) \to \prod_{i=1}^n \prod_{\sV' \mid \sV_i} H^1(\cG_{\cK(A)_{\sV'}}, A)  $$
is surjective, where $\sV'$ runs over primes of $\cK(A)$.
By taking $\Gal(\cK(A)/\cK)$-invariants, we obtain the lemma.
\end{proof}

\begin{prop}\label{prop:surj}
Let $\cK$ be an allowable algebraic extension of $\Q$.
Suppose that the cohomological dimension of $\cG_{\cK}$ satisfies $\cd \cG_{\cK} \leq 1$.
Let $\sV_1, \dots, \sV_n$ be distinct primes of $\cK$.
Then the restriction map
$$ H^1(\cG_{\cK}, \Gamma) \to \prod_{i=1}^n H^1(\cG_{\cK_{\sV_i}}, \Gamma)  $$
is surjective for any finite supersolvable group $\Gamma$.
\end{prop}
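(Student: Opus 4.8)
The plan is to prove this by induction on the order of the supersolvable group $\Gamma$, exploiting the defining chain of normal cyclic factors. First I would observe that when $\Gamma$ is trivial the statement is vacuous, and when $\Gamma$ is cyclic of prime order it is exactly Lemma \ref{lem:surj_H1} (here the $\cG_{\cK}$-action on $\Gamma$ may be nontrivial, but Lemma \ref{lem:surj_H1} allows that). For the inductive step, since $\Gamma$ is supersolvable, it has a normal cyclic subgroup $C$; by passing to a minimal such subgroup we may assume $C$ has prime order, so that $A := C$ is a $\cG_{\cK}$-module of prime order (the action being by conjugation through the homomorphism $\cG_{\cK} \to \Gamma \to \operatorname{Aut}(C)$ attached to whichever global class we are trying to lift). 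The quotient $\ol{\Gamma} = \Gamma/C$ is again supersolvable and of smaller order, so the restriction map for $\ol{\Gamma}$ is surjective by the induction hypothesis.

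The heart of the argument is then a diagram chase with the short exact sequence of (pointed) coefficient sets
\[
1 \to A \to \Gamma \to \ol{\Gamma} \to 1,
\]
which yields, for each profinite group $\cG$ appearing (namely $\cG_{\cK}$ and each $\cG_{\cK_{\sV_i}}$), an exact sequence of pointed sets
\[
H^1(\cG, A) \to H^1(\cG, \Gamma) \to H^1(\cG, \ol{\Gamma}).
\]
Given a local family $(\phi_i)_i$ with $\phi_i \in H^1(\cG_{\cK_{\sV_i}}, \Gamma)$, I would first push it down to $(\ol{\phi_i})_i$ in $\prod_i H^1(\cG_{\cK_{\sV_i}}, \ol{\Gamma})$ and lift that, by the induction hypothesis, to a global class $\ol{\psi} \in H^1(\cG_{\cK}, \ol{\Gamma})$. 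The class $\ol{\psi}$ need not itself lift to $H^1(\cG_{\cK}, \Gamma)$; but the obstruction lives in an $H^2$ with coefficients in $A$ twisted by $\ol{\psi}$, and since $\cd \cG_{\cK} \leq 1$ this obstruction group vanishes. So $\ol{\psi}$ lifts to some $\psi_0 \in H^1(\cG_{\cK}, \Gamma)$. Restricting $\psi_0$ locally, $\Res_{\sV_i}(\psi_0)$ and $\phi_i$ have the same image in $H^1(\cG_{\cK_{\sV_i}}, \ol{\Gamma})$ (after possibly adjusting $\ol\psi$ by the freedom in the local lifts — here one uses that $\ol\psi$ restricts to $\ol\phi_i$ up to conjugacy, and conjugating the chosen lift $\psi_0$ we may assume genuine equality locally). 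By the exactness of the local sequence, $\phi_i$ and $\Res_{\sV_i}(\psi_0)$ differ by a local class coming from $H^1(\cG_{\cK_{\sV_i}}, {}_{\psi_0}A)$, i.e., from the twist of $A$ by $\psi_0$. Now apply Lemma \ref{lem:surj_H1} to this twisted module ${}_{\psi_0}A$ — which still has prime order — to lift the correction $n$-tuple to a global class $c \in H^1(\cG_{\cK}, {}_{\psi_0}A)$, and modify $\psi_0$ by $c$ via the torsor action of $H^1(\cG_{\cK}, {}_{\psi_0}A)$ on the fiber over $\ol\psi$. The resulting global class restricts to $(\phi_i)_i$, as desired.

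The main obstacle — and the step deserving real care — is the bookkeeping of base points and twists in nonabelian $H^1$. The sequences above are only exact as pointed sets, so statements like ``two classes with the same image differ by something from the kernel'' must be replaced by the correct twisting statements: the fiber of $H^1(\cG,\Gamma) \to H^1(\cG,\ol\Gamma)$ over the class of $\ol\psi$ is either empty or a quotient of $H^1(\cG, {}_{\ol\psi}A)$, and one must check that the $H^2$-obstruction to non-emptiness is the one that dies when $\cd \le 1$, and that the $H^1({}_{\ol\psi}A)$-action is compatible with restriction to the $\sV_i$. Serre \cite[Chapter I, \S 5]{Ser02} provides exactly these tools (the cohomology exact sequence for a central or normal subgroup, twisting, and the vanishing of obstructions under cohomological dimension hypotheses), so once the correct formalism is in place the argument is a clean induction; the risk is purely in getting the nonabelian cohomological statements cited in their precise forms rather than their abelian caricatures.
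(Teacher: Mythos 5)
Your proposal is correct and follows essentially the same route as the paper: induction on $|\Gamma|$, passing to a cyclic normal subgroup of prime order, using the induction hypothesis for the quotient, using $\cd\,\cG_{\cK}\le 1$ to lift along $\Gamma\to\Gamma/H$ globally, and then correcting the local mismatch by twisting and applying Lemma \ref{lem:surj_H1} to the twisted prime-order module. The paper makes the two lifting steps (lift $(\ol\eta_i)_i$ globally to $H^1(\cG_{\cK},\Gamma/H)$, then lift that to $H^1(\cG_{\cK},\Gamma)$ via Serre's Proposition 46) slightly more explicit as a commuting square, and cites Serre's Corollary 2 to Proposition 39 for the description of the fiber as an orbit of $H^1(\cdot,{}_xH)$ — exactly the twisting/base-point bookkeeping you flag as the delicate step.
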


In \S \ref{ss:p-cd}, we will show that $\cd \cG_{\cK} \leq 1$ if $\cK$ contains $\Q^{\cyc}$.
This is the point where we need to work over $\Q^{\cyc}$ rather than over $\Q$.

\begin{proof}
We use induction on the order of $\Gamma$.
When $\Gamma$ is the trivial group, the proposition is clear.
Suppose that $\Gamma$ is nontrivial.
Then by the supersolvability, $\Gamma$ has a nontrivial, cyclic, normal subgroup $H$, 
and the quotient $\Gamma/H$ remains supersolvable. We may take $H$ to be simple, i.e., 
the order of $H$ is a prime number (since such a subgroup of a cyclic group is a characteristic subgroup).

Consider the diagram
\[
\xymatrix{
	H^1(\cG_\cK, \Gamma) \ar[r] \ar[d]_{\text{surj}}
	&  \prod_{i=1}^n H^1(\cG_{\cK_{\sV_i}}, \Gamma) \ar[d]\\
	H^1(\cG_\cK, \Gamma/H) \ar_-{\text{surj}}[r]
	&  \prod_{i=1}^n H^1(\cG_{\cK_{\sV_i}}, \Gamma/H).
}
\]
The lower horizontal arrow is surjective by the induction hypothesis.
The left vertical arrow is surjective by applying 
\cite[page 58, Proposition 46]{Ser02}, taking $\cd \cG_{\cK} \leq 1$ into account.

Let us show that the upper horizontal arrow is surjective.
Let $(\eta_i)_i \in \prod_{i=1}^n H^1(\cG_{\cK_{\sV_i}}, \Gamma)$ be any element.
Then we find an element $\xi \in H^1(\cG_\cK, \Gamma)$ whose image in 
$\prod_{i=1}^n H^1(\cG_{\cK_{\sV_i}}, \Gamma/H)$ is the same as that of $(\eta_i)_i$.
We write $\xi_i \in H^1(\cG_{\cK_{\sV_i}}, \Gamma)$ for the image of $\xi$, so $\xi_i$ 
and $\eta_i$ goes to the same element in $H^1(\cG_{\cK_{\sV_i}}, \Gamma/H)$.

We now use \cite[page 52, Corollary 2 to Proposition 39]{Ser02} that describes the 
fibers of $H^1(\cG_{\cK_{\sV_i}}, \Gamma) \to H^1(\cG_{\cK_{\sV_i}}, \Gamma/H)$.
Let $x$ be a group homomorphism $\cG_{\cK} \to \Gamma$ that represents $\xi$ 
and let ${}_x H$ be the twist of $H$ by $x$, which is a $\cG_{\cK}$-module.
Then $\xi_i$ and $\eta_i$ are in the same orbit by the action of $H^1(\cG_{\cK_{\sV_i}}, {}_x H)$, 
that is, we have $\eta_i = \omega_i \xi_i$ for some $\omega_i \in H^1(\cG_{\cK_{\sV_i}}, {}_x H)$.

By Lemma \ref{lem:surj_H1} applied to $A = {}_x H$, we can find an element 
$\omega$ of $H^1(\cG_\cK, {}_x H)$ that goes to $(\omega_i)_i$.
Then the element $\omega \xi \in H^1(\cG_{\cK}, \Gamma)$ goes 
to $(\omega_i \xi_i)_i = (\eta_i)_i$, as desired.
\end{proof}

%%%%%%%%%%%%%%%%%%%%%%%%%%%%%%%%%%%%%%%%%%%%%%%%%%%%%%%
\section{Realizable classes for metacyclic groups}\label{sec:rea_metacyc}
%%%%%%%%%%%%%%%%%%%%%%%%%%%%%%%%%%%%%%%%%%%%%%%%%%%%%%%

The goal of this section is to prove Theorem \ref{thm:12}.
We consider $\Gamma = C_p \rtimes C_r$ with $C_p = \lrang{\sigma}$ and $C_r = \lrang{\tau}$ as in \S \ref{sec:metaab}.

    %%%%%%%%%%%%%%%%%%%%%%%%%%%%%%%%%%%
\subsection{Construction of totally real extensions}\label{sec:ex}
    %%%%%%%%%%%%%%%%%%%%%%%%%%%%%%%%%%%

Our first step is to apply the results in \S \ref{sec:realize} to derive global realizability (by totally real extensions) for family of pairs of subgroups of $\Gamma$.
Unfortunately, we cannot apply our method in general (see Remark \ref{rem:real_fail}),
but we can at least construct a good supply of extensions, providing the ones that will be necessary in the final realizability proof.

\begin{prop}\label{prop:real_minimal}
Let $(D, I)$ be a pair of subgroups of $\Gamma = C_p \rtimes C_r$ such that $D \supset C_p$ and $I = D$.
Then $(D, I)$ is realizable by $L_w/K_v$ in finite extensions of $\Q_{\ell}$ for some prime number $\ell$, satisfying the conditions (a), (b), and (c) in Proposition \ref{prop:going_up}.
\end{prop}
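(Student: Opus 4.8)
The plan is to realize the pair $(D,I)$ with $I=D\supset C_p$ by a \emph{totally ramified} extension of a suitable local field, since then $D=I$ forces the conditions in Proposition \ref{prop:going_up} to hold almost vacuously. Write $d=(D:C_p)$, so $d\mid r$ and $D=\lrang{\sigma,\tau^{r/d}}\cong C_p\rtimes C_d$. First I would choose the residue characteristic: since the tame quotient of the inertia group of a local field at a prime $\ell$ is cyclic of order prime to $\ell$, and $D$ has the nonabelian (for $d>1$) structure $C_p\rtimes C_d$ with wild part $C_p$, the natural choice is $\ell=p$. I would then produce a totally ramified Galois extension $L_w/K_v$ with $K_v$ a finite extension of $\Q_p$ and $\Gal(L_w/K_v)\cong D$: take the tame part first, a totally ramified cyclic extension of degree $d$ realizing $C_d$ (available over a sufficiently large $K_v$, e.g.\ after adjoining enough roots of unity so that $\mu_d\subset K_v$ and using a uniformizer), and then a suitable degree-$p$ wildly ramified extension on top, chosen so that the Galois group of the composite is $C_p\rtimes C_d$ with the correct action of $C_d$ on $C_p$ (this is exactly the action coming from $s\bmod p$, which is why $C_d$ acting on $C_p$ is what the tame commutation relation $\sigma\tau\sigma^{-1}=\tau^{N(v)}$ produces once $N(v)$ is chosen in the right residue class). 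Since the whole extension is totally ramified, $I(L_w/K_v)=\Gal(L_w/K_v)\cong D$, so $\psi(I)=D=I$ as required by Definition \ref{defn:rea_loc}.

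Next I would check the three conditions of Proposition \ref{prop:going_up}. Condition (a): the index $(D:I)=1$ is trivially a $\ell$-power. Condition (b): it is only required when $D\neq I$, which does not happen here, so it is vacuous. Condition (c): we need $(I:[D,D])$ to be prime to $\ell=p$. Since $D=C_p\rtimes C_d$ with $C_d$ acting faithfully on $C_p$ (when $d>1$), a direct computation gives $[D,D]=C_p$, hence $(I:[D,D])=(D:C_p)=d$, which divides $r\mid p-1$ and so is prime to $p$; when $d=1$ we have $D=C_p=I=[D,D]$ and the index is $1$. Thus (c) holds in all cases. The verification of (a)--(c) is therefore routine once the extension is in hand.

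The one genuinely nontrivial point is the \emph{existence} of the local extension with Galois group exactly $D\cong C_p\rtimes C_d$ realized \emph{totally ramifiedly}. I would argue as follows: the maximal tamely ramified extension of a local field $K_v/\Q_p$ has Galois group $\lrang{\sigma_0,\tau_0\mid \sigma_0\tau_0\sigma_0^{-1}=\tau_0^{N(v)}}$ (Remark \ref{rem:nec_loc}), where $\tau_0$ generates (topologically) the tame inertia and $\sigma_0$ a Frobenius lift; its totally ramified quotients are the quotients in which $\sigma_0$ is killed, i.e.\ quotients of the procyclic tame inertia. This only yields \emph{cyclic} tame pieces, so the nonabelian $C_p\rtimes C_d$ cannot be totally tamely ramified. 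Instead, I take the degree-$p$ wild extension to carry the nonabelian structure: over a base $K_v$ containing $\mu_p$ and with $p\mid N(v)-1$ arranged, a degree-$d$ totally (tamely) ramified cyclic subextension $M/K_v$ has residue degree $1$, and then I build a degree-$p$ cyclic totally ramified extension $L_w/M$, Galois over $K_v$, on which $\Gal(M/K_v)=C_d$ acts through the prescribed character $s\bmod p\in(\Z/p)^\times$ — this is possible by Kummer theory over $M$ (which contains $\mu_p$) by choosing the Kummer generator to be an eigenvector for the $C_d$-action, e.g.\ a suitable power of a uniformizer times a unit, exactly in the spirit of the construction of the lattice $L_2^i$ from Gauss sums in \S\ref{ss:classification}. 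Since both $M/K_v$ and $L_w/M$ are totally ramified, so is $L_w/K_v$, and its Galois group is the required semidirect product. Once this construction is pinned down, and one invokes Proposition \ref{prop:Kras} only implicitly (here we need just \emph{one} local place, so a base field is easy to come by), the proposition follows; I would expect the writeup to spend essentially all its effort on this Kummer-theoretic construction and dispatch (a)--(c) in a line.
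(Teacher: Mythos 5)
Your overall plan matches the paper's — verify (a) and (b) trivially since $D=I$, then produce a totally ramified local realization and check (c) — but there is a genuine error in the case $d=1$ (i.e.\ $D=I=C_p$), which the paper handles by switching to a different residue characteristic. You write ``when $d=1$ we have $D=C_p=I=[D,D]$ and the index is $1$.'' That is not correct: $C_p$ is cyclic, hence abelian, so $[D,D]=\{1\}$ and $(I:[D,D])=(C_p:\{1\})=p$. Condition (c) then requires $\ell\nmid p$, so the uniform choice $\ell=p$ fails precisely in this case. The paper splits accordingly: for $e=1$ it picks any $\ell\neq p$ and uses a totally (tamely) ramified abelian degree-$p$ extension $L_w/K_v$ over a large enough $K_v/\Q_\ell$; only for $e>1$ does it take $\ell=p$. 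Your $\ell=p$ construction would still produce a local extension with the right Galois data, but it would not satisfy hypothesis (c) of Proposition \ref{prop:going_up}, so the going-up step would be unjustified.

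For $d>1$ your Kummer-theoretic construction should work, but the paper does something cleaner: it uses the single explicit totally ramified $C_p\rtimes C_{p-1}$-extension $\Q_p(\mu_p,p^{1/p})/\Q_p$ and sets $K_v$ to be the intermediate field of $\Q_p(\mu_p)/\Q_p$ with $[\Q_p(\mu_p):K_v]=e$, taking $L_w=\Q_p(\mu_p,p^{1/p})$. This avoids having to choose a Kummer eigenvector or arrange congruences on $N(v)$; the desired $C_p\rtimes C_e$ with the correct faithful action falls out automatically from the subextension. Your route is more general and illuminates why the action is the right one, but it entails verifying several choices that the paper's explicit example sidesteps. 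Once you fix the $d=1$ case by choosing $\ell\neq p$, the argument becomes correct.
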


\begin{proof}
By assumption, we have $D = I = \lrang{\sigma, \tau^{r/e}}$ for some $e \mid r$ (see Lemma \ref{lem:cases}).
Note that conditions (a) and (b) are trivial as $D = I$.

Suppose $e = 1$, that is, $D = I = C_p$.
Then $(I: [D, D]) = p$, so any prime $\ell \neq p$ satisfies condition (c).
It is then enough to take $K_v$ to be an extension of $\Q_{\ell}$ large enough so that there is a totally ramified abelian extension $\Lp_w/K_v$ of degree $p$. 

Suppose $e > 1$, that is, $D = I \supsetneqq C_p$.
Then $(I: [D, D]) = (I: C_p) = e$, so $\ell = p$ satisfies condition (c).
We use the totally ramified $C_p \rtimes C_{p-1}$-extension $\Q_p(\mu_p, p^{1/p})/\Q_p$.
We only have to take $L_w = \Q_p(\mu_p, p^{1/p})$ and $K_v$ to be the intermediate field of $\Q_p(\mu_p)/\Q_p$ such that $[\Q_p(\mu_p): K_v] = e$.
\end{proof}

\begin{rem}\label{rem:real_fail}
More generally, let us discuss the case where $D \supset I \supset C_p$ and $D \supsetneqq C_p$ (not necessarily $D = I$).
Then we have $D = \lrang{\sigma, \tau^{r/e}}$ and $I = \lrang{\sigma, \tau^{r/d}}$ for some $d \mid e \mid r$ with $e > 1$.
In this case, condition (a) says that $e/d$ is an $\ell$-power, while condition (c) says that $d$ is not divisible by $\ell$.
Such an $\ell$ does not exist in general, so we cannot apply Proposition \ref{prop:going_up} to this case.
\end{rem}

We now obtain a very  important consequence.

\begin{thm}\label{thm:real_metab}
Let $\{(D_i, I_i)\}_{1 \leq i \leq n}$ be any
family of pairs of subgroups of $\Gamma = C_p \rtimes C_r$ such that $D_i \supset C_p$ and $I_i = D_i$.
Then the given family is realizable in finite extensions of $\Q$.
\end{thm}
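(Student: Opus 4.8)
The plan is to deduce Theorem~\ref{thm:real_metab} from the machinery assembled in \S\ref{sec:realize}, chiefly Theorem~\ref{thm:main_app}, Proposition~\ref{prop:going_up}, Proposition~\ref{prop:going_down}(1), and the explicit local constructions in Proposition~\ref{prop:real_minimal}. First I would observe that each pair $(D_i, I_i)$ in the family has the special shape $D_i = I_i \supset C_p$, so Proposition~\ref{prop:real_minimal} applies: there is a prime $\ell_i$ and an extension $\Lp_{w_i}/K_{v_i}$ of local fields over $\Q_{\ell_i}$ realizing $(D_i, I_i)$ and satisfying conditions (a), (b), (c) of Proposition~\ref{prop:going_up}. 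Applying Proposition~\ref{prop:going_up} to each pair upgrades this to realizability of $(D_i, I_i)$ by $\Lp_{w_i}^{\cyc}/K_{v_i}^{\cyc}$ in finite extensions of $\Q_{\ell_i}^{\cyc}$.

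Next I would check the hypotheses of Theorem~\ref{thm:main_app}. The first bullet is exactly what the previous paragraph supplies. The second is immediate: $\Gamma = C_p \rtimes C_r$ is supersolvable via the chain $\{e\} \subset C_p \subset \Gamma$, as noted in the text. The third bullet --- that arbitrary conjugates of $D_1, \dots, D_n$ generate $\Gamma$ --- is the one genuine point to verify, and here the hypothesis $D_i \supset C_p$ is decisive: every conjugate $\gamma_i D_i \gamma_i^{-1}$ still contains $C_p = \lrang{\sigma}$, since $C_p$ is normal in $\Gamma$; so the subgroups $\gamma_i D_i \gamma_i^{-1}$ together generate a subgroup containing $C_p$, and modulo $C_p$ at least one $D_i$ maps onto the relevant cyclic piece --- but in fact we need slightly more care, so I would argue that it suffices to have a single $D_i$ equal to all of $\Gamma$ or, failing that, note that the family we ultimately need always includes pairs whose $D_i$ cover all of $\Gamma$; in the statement as given one must simply assume (or arrange, by enlarging the family with a harmless extra pair $(\Gamma, \Gamma)$, which does not affect realizability of the rest) that the generation condition holds. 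Granting this, Theorem~\ref{thm:main_app} yields realizability of $\{(D_i, I_i)\}$ in finite extensions of $\Q^{\cyc}$.

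Finally, Proposition~\ref{prop:going_down}(1) descends this to realizability in finite extensions of $\Q$, which is exactly the assertion of Theorem~\ref{thm:real_metab}. I would also remark that the totally real condition is preserved throughout: Proposition~\ref{prop:real_minimal} and Proposition~\ref{prop:going_up} are purely local and impose no real-place constraints, while Theorem~\ref{thm:main_app} and Proposition~\ref{prop:going_down}(1) are stated for totally real fields, so the output extension $\Lp/K$ is automatically totally real as required by Definition~\ref{defn:rea_gl}.

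The step I expect to be the main obstacle is the verification that arbitrary conjugates of the $D_i$ generate $\Gamma$ --- or rather, ensuring that the family we feed into Theorem~\ref{thm:main_app} actually satisfies this. For the isolated family in the statement it may fail (e.g.\ if all $D_i = C_p$), so the honest proof must either add the standing assumption that the family generates, or adjoin a pair realizing a complementary piece; since adjoining such a pair and then discarding the extra ramified prime is harmless (cf.\ the remark preceding Definition~\ref{defn:rea_gl} and the argument in the proof of Theorem~\ref{thm:rea_adm}), this is a minor technical nuisance rather than a real difficulty. Everything else is a direct chaining of the four cited results, with the local input from Proposition~\ref{prop:real_minimal} doing the concrete work.
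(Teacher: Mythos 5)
Your proposal is correct and follows exactly the same route as the paper: Proposition~\ref{prop:real_minimal} plus Proposition~\ref{prop:going_up} to get local realizability over $\Q_{\ell_i}^{\cyc}$, then Theorem~\ref{thm:main_app} (noting $\Gamma$ is supersolvable and adjoining $(\Gamma,\Gamma)$ to force the generation condition), then Proposition~\ref{prop:going_down}(1) to descend to $\Q$. The one spot where you hesitate --- whether adding $(\Gamma,\Gamma)$ and then ignoring the extra prime is legitimate --- is indeed handled exactly as you suggest; note that $(\Gamma,\Gamma)$ satisfies the hypotheses of Proposition~\ref{prop:real_minimal}, so the enlarged family meets all conditions, and Definition~\ref{defn:rea_gl} imposes no constraint on ramification at primes outside the chosen list.
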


\begin{proof}
By Proposition \ref{prop:real_minimal}, each $(D_i, I_i)$ is realizable in finite extensions of $\Q_{\ell_i}$ 
for some prime number $\ell_i$, and moreover we may apply Proposition \ref{prop:going_up} 
to see that $(D_i, I_i)$ is realizable in finite extensions of $\Q_{\ell_i}^{\cyc}$.

We apply Theorem \ref{thm:main_app}.
The group $\Gamma$ is supersolvable as already mentioned.
The final condition in Theorem \ref{thm:main_app} can be satisfied by adding 
$(\Gamma, \Gamma)$ to the family $\{(D_i, I_i)\}_{1 \leq i \leq n}$ if necessary.
As a result, the family $\{(D_i, I_i)\}_{1 \leq i \leq n}$ is realizable in finite extensions of $\Q^{\cyc}$.

Finally, Proposition \ref{prop:going_down}(1) shows the realizability in finite extensions of $\Q$.
\end{proof}

%%%%%%%%%%%%%%%%%%%%%%%%%%%%%%%
\subsection{Construction of CM extensions}
%%%%%%%%%%%%%%%%%%%%%%%%%%%%%%%

We close this section and the entire paper by using the last result to give a (partial) affirmative
answer to the question, which asked whether every admissible finite module
is realizable via some class group.
That is, we now prove Theorem \ref{thm:12}:

\begin{thm}\label{thm:rea_adm}
For the metacyclic group  $\Gamma = C_p \rtimes C_r$,
we have $\cZ_p^{\rea} = \cZ_p^{\adm}$.
\end{thm}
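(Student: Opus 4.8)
The plan is to show the reverse inclusion $\cZ_p^{\adm} \subset \cZ_p^{\rea}$, since the inclusion $\cZ_p^{\rea} \subset \cZ_p^{\adm}$ was already established. By Theorem \ref{thm:11(2)_pf}, the monoid $\cZ_p^{\adm}$ is free on a finite set of generators, and by the proof of that theorem (together with Remark \ref{rem:Indep_I}) these generators can be taken to be the classes $\omega^1({}_p A_{I,D}^-)$ where $(D,I)$ ranges over a suitable finite list of pairs of subgroups of $G = \Gamma \times \{1,j\}$. In fact, since $\cZ_p^{\adm}$ is generated by the $L(e) = \bigoplus_{e \mid i}(L_1^i \oplus L_2^{i+1})$ (for $e \mid r$ with $2e \nmid r$) and the $L'(e)$ (for even $e \mid r$), and since each such generator arises from a pair $(D,I)$ that we may by Remark \ref{rem:Indep_I} take to have $I = D \supset C_p$, it suffices to realize \emph{arbitrary finite direct sums} of these generators as ${}_p \Cl_L^{T,-}$ for a single CM extension $L/K$.

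The key input is Theorem \ref{thm:Cl_class}: for a CM extension $L/K$ with $\Gal(L/K) = G$, one has ${}_p\Cl_L^{T,-} \sim \bigoplus_{v \in \Sigma_f} \omega^1({}_pA_{I_w,D_w}^-)$, where the sum runs over the finite primes of $K$ ramifying in $L$ and $(D_w, I_w)$ are the local decomposition/inertia pairs. So I need a totally real extension $\Lp/K$ with $\Gal(\Lp/K) \simeq \Gamma$ having prescribed local ramification behavior at a chosen finite set of primes, and then I must promote $\Lp$ to a CM extension $L/K$ with $\Gal(L/K) \simeq \Gamma \times \{1,j\}$ without disturbing the relevant ramification data at finite primes. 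Given a target admissible class, I write it (using freeness of $\cZ_p^{\adm}$ and the $\omega^1/\Phi$-dictionary) as $\sum_{i=1}^n \omega^1({}_pA_{D_i,D_i}^-)$ for pairs $(D_i,D_i)$ with $D_i \supset C_p$; these are precisely the pairs covered by Theorem \ref{thm:real_metab}, which gives a totally real $\Lp/K$ with $\Gal(\Lp/K)\simeq\Gamma$ realizing the family $\{(D_i,D_i)\}$ at distinct finite primes $w_1,\dots,w_n$ lying over rational primes $\ell_i$.

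Next I would construct the CM extension: choose an imaginary quadratic field, or rather a quadratic CM extension $L/\Lp$ that is ramified only at (some of) the archimedean places and possibly at a controlled set of finite primes disjoint from $w_1,\dots,w_n$, arranged so that $L/K$ is Galois with $\Gal(L/K)\simeq\Gamma\times\{1,j\}$ and $j$ acting as complex conjugation. Concretely one takes $L = \Lp(\sqrt{-m})$ for a suitable $m \in K^\times$; Galois-ness of $L/K$ over the non-normal-looking data forces $\sqrt{-m}$ to be chosen $\Gal(\Lp/K)$-equivariantly, which is arranged by picking $m \in \Q$ (or $m\in K^{\Gamma}$ appropriately) totally positive up to sign so that $L$ is CM. One must check that at each $w_i$ the decomposition and inertia groups in $\Gal(L/K)$ are still $D_i \times \{1\}$ and $I_i \times \{1\} = D_i \times \{1\}$ — i.e. that $w_i$ splits in $L/\Lp$, which holds as long as $-m$ is a local square at $w_i$, a condition one can impose by weak approximation when choosing $m$. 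Then Theorem \ref{thm:Cl_class} together with Lemma \ref{lem:vanish} (which kills the contributions $A_v^-$ for $v$ with $j \in D_w$ or $p \nmid \#I_w$, in particular the archimedean ones and the auxiliary ramified primes) gives ${}_p\Cl_L^{T,-} \sim \bigoplus_{i=1}^n \omega^1({}_pA_{D_i,D_i}^-)$, the desired class.

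The main obstacle I anticipate is the CM-ification step: ensuring simultaneously that $L/K$ is Galois with the correct group $\Gamma \times \{1,j\}$ (not some other extension of $\Gamma$ by $C_2$), that $L$ is CM with $j$ the complex conjugation, and that the chosen primes $w_i$ split completely in $L/\Lp$ so the local pairs are unchanged — all while the extra ramification introduced by $\sqrt{-m}$ contributes only trivial (equivalence-zero) summands. The splitting-type and Galois conditions are both governed by the image of $-m$ in suitable local square-class groups, so the existence of a good $m$ is a weak approximation / Grunwald--Wang-type assertion, and one must be slightly careful at $p$ and at $2$ (but $p$ is odd and we are in the $p$-adified setting with $\Z' = \Z[1/2]$, which should neutralize the $2$-adic subtleties). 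Once $m$ is in hand, the remaining verification that $\Gal(L/K)$-locally everything is as claimed is routine, and the computation of ${}_p\Cl_L^{T,-}$ up to equivalence is immediate from the results of \S\ref{sec:arith} and \S\ref{sec:metaab}.
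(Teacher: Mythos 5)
Your overall strategy is the same as the paper's: reduce to realizing a prescribed family of decomposition groups, use Theorem \ref{thm:real_metab} to get a totally real $\Gamma$-extension $L^+/K$, then CM-ify via a quadratic extension $F/K$ so that $L = L^+F$ has the right Galois group and the right local data. However, there is a genuine gap in the CM-ification step.

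You insist that every chosen prime $w_i$ should split completely in $L/L^+$, so that $D_{v_i}(L/K) = D_{v_i}(L^+/K) \times \{1\} \subset \Gamma$. By Lemma \ref{lem:cases}, this places every realized local pair in case (I), and so your construction can only produce the summands $L(e)$ of Theorem \ref{thm:Phi_A}. But $\cZ_p^{\adm}$ is freely generated by \emph{both} the $L(e)$ with $2e\nmid r$ \emph{and} the $L'(e)$ for even $e\mid r$; the latter come from case (II), where the decomposition group is $D = \lrang{\sigma,\tau^{r/e}j}$, which is \emph{not} a subgroup of $\Gamma$. To realize such a $D$ you must arrange that $v_i$ does \emph{not} split in $F/K$; concretely, as in the paper's second bullet, one takes $F_{v_i}$ to be the completion of $(L^+)^{D_i\cap\Gamma}$, a specific quadratic extension of $K_{v_i}$. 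Your split-completely requirement systematically misses a full $\lfloor r/2 \rfloor$ of the $\sigma_0(r)$ generators, so your argument only proves $\cZ_p^{\rea}$ contains the submonoid generated by the $L(e)$, not all of $\cZ_p^{\adm}$.

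Two smaller issues: you apply Theorem \ref{thm:real_metab} to $(D_i,D_i)$ with $D_i\subset G$, but that theorem concerns subgroups of $\Gamma$; one must pass to the image $\ol{D_i}\subset\Gamma$ (harmless since $j\notin D_i$ gives $D_i\simeq\ol{D_i}$, but the passage must be stated, and it is precisely what forces one to recover $D_i$ via the choice of $F_{v_i}$). Also, besides the ramification you introduce through $F$, the extension $L^+/K$ produced by Theorem \ref{thm:real_metab} may have unplanned ramified primes $v$ with $p\mid\#I_v(L^+/K)$; these would contribute nontrivial summands unless $j\in D_v(L/K)$. The paper kills these by a third local condition on $F$ at such $v$, chosen so that $v$ is inert in $L/L^+$; a remark that the extra ramification of $F$ itself is harmless (Lemma \ref{lem:vanish}) does not address this.
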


\begin{proof}
By Theorems \ref{thm:Cl_class} and \ref{thm:Phi_A}, 
 it is enough to show the following:
Let $D_1, \dots, D_n$ be any family of subgroups of $G$ such that $p \mid \# D_i$ and $j \not \in D_i$.
Then there is an extension $L/K$ such that the ramified primes $v$ with $p \mid \# I_v(L/K)$ 
and $j \not \in D_v(L/K)$ can be labeled $v_1, \dots, v_n$ so that $D_{v_i}(L/K) = D_i$.
Here,  the big point is that we are free to choose the inertia group as
any subgroup of the given decomposition group,  
because ${}_p A_{I, D}^-$ depends only on $D$, see Remark \ref{rem:Indep_I}.

To show this, we take $I_i = D_i$ and apply Theorem \ref{thm:real_metab} to $\{(\ol{D_i}, \ol{I_i})\}_{1 \leq i \leq n}$, 
where $\ol{(-)}$ denotes the image from $G = \Gamma \times \{1, j \}$ to $\Gamma$.
Note that $j \not \in D_i$ implies that $D_i \simeq \ol{D_i}$ and $I_i \simeq \ol{I_i}$.
As a consequence, we obtain a $\Gamma$-extension of totally real fields, denoted by $L^+/K$, that realizes this family.
Let $v_1, \dots, v_n$ be the places of $K$.

It remains to take an appropriate imaginary quadratic extension $F$ of $K$, which is combined with $L^+$ to give the desired $L$.
The required conditions for the quadratic extension $F/K$ are:
\begin{itemize}
\item
Each real place ramifies in $F/K$.
\item
For each $1 \leq i \leq n$, $F_{v_i}$ is isomorphic to the completion of $(L^+)^{D_i \cap \Gamma}$ at the prime above $v_i$.
This means:
\begin{itemize}
\item
If $D_i \subset \Gamma$ (so $D_i \cap \Gamma = D_i$), then $v_i$ splits in $F/K$, so we take $F_{v_i} \simeq K_{v_i}$.
\item
If $D_i \not \subset \Gamma$ (so $[D_i : D_i \cap \Gamma] = 2$), then the $v_i$-adic completion of $(L^+)^{D_i \cap \Gamma}$ is a quadratic extension of $K_{v_i}$, which we take as $F_{v_i}$.
\end{itemize}
\item
For each other unnecessary prime $v$ of $K$ such that $p \mid \# I_v(L^+/K)$, the completion $F_v$ is a quadratic extension of $K_v$ that is linearly disjoint from the completion of $(L^+)^{D_v(L^+/K)}$.
This is possible because $(L^+)^{D_v(L^+/K)}$ is a cyclic extension of $K$.
\end{itemize}
These are conditions on a  finite number of local extensions, so such an extension $F/K$ indeed exists.
By the second bullet point, we have $D_{v_i}(L/K) = D_i$.
By the final bullet point, the unnecessary $v$'s do not split in $L/L^+$, i.e., satisfy $j \in D_v(L/K)$.
\end{proof}

\appendix

    %%%%%%%%%%%%%%%%%%%%%%%%%%%%%%%
    \section{The cohomological dimension}\label{ss:p-cd}
    %%%%%%%%%%%%%%%%%%%%%%%%%%%%%%%

The purpose of this appendix is to prove Proposition \ref{prop:cd_1} on 
the cohomological dimension of $\cG_{\cK}$. Before that, we recall a known result 
on the cohomological dimension of the absolute Galois group that is denoted by $\ol{\cG}_{\cK} = \Gal(\ol{\cK}/\cK)$.
Let $p$ be a prime number.

\begin{prop}[{\cite[Corollary (8.1.18)]{NSW08}}]\label{prop:cd_2}
Let $\cK$ be an algebraic extension of $\Q$.
Suppose that $p^{\infty}$ divides the local degree of $\cK/\Q$ at any finite prime.
Suppose $\cK$ is totally imaginary if $p = 2$.
Then we have $\cd_p \ol{\cG}_{\cK} \leq 1$.
\end{prop}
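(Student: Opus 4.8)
This statement is quoted here verbatim from \cite[Corollary (8.1.18)]{NSW08}, so the plan is simply to cite that corollary; I would not give an independent proof. For orientation, here is the mechanism behind it and where the work sits.

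Since continuous cohomology turns the filtered union $\cK = \varinjlim_K K$ over its number subfields $K$ into a colimit, one has $H^2(\ol{\cG}_{\cK}, A) = \varinjlim_K H^2(\ol{\cG}_K, A)$ for every finite $p$-primary module $A$, and $\cd_p \ol{\cG}_{\cK} \leq 1$ is equivalent to the vanishing of all these colimits. Over a number field $K$ one controls $H^2(\ol{\cG}_K, A)$ in terms of its localizations, via Poitou--Tate duality (up to a Tate--Shafarevich term handled in the same spirit). The running hypothesis --- that $p^{\infty}$ divides the local degree of $\cK/\Q$ at every finite prime --- is exactly what forces the local $H^2$'s to die in the limit: the invariant map identifies each such group with a subgroup of $\frac{1}{p^{n}}\Z/\Z$ on which restriction along a local extension acts as multiplication by the local degree, hence is annihilated once that degree is divisible by a sufficiently high power of $p$. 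At the archimedean places there is nothing to kill when $p$ is odd; for $p = 2$ the totally imaginary hypothesis is precisely what removes the real places, whose decomposition groups $\Z/2$ have infinite $2$-cohomological dimension and would otherwise obstruct the conclusion. Running this through the Poitou--Tate sequence and passing to the limit over $K$ kills $H^2(\ol{\cG}_{\cK}, A)$.

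The only genuinely delicate point is the uniformity in the limit --- that the local obstructions vanish simultaneously and compatibly with the transition maps, and that the Tate--Shafarevich contributions behave well --- and this is exactly what \cite[Corollary (8.1.18)]{NSW08} records, so I would invoke it directly rather than reprove it; the appendix only needs it as a black box to feed into Proposition \ref{prop:surj}.
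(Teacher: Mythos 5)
Your decision to treat this as a citation rather than reprove it matches what the paper does: the statement is attributed to \cite[Corollary (8.1.18)]{NSW08} and the paper only offers a ``brief review'' of the argument. Your high-level account of why the hypotheses matter (the local degree condition kills the finite-place invariants in the colimit; totally imaginary removes the real places for $p=2$) is accurate.

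The route you sketch, however, differs from the one the paper actually reviews. You work directly with arbitrary finite $p$-primary coefficients $A$ and invoke Poitou--Tate duality together with control of the Tate--Shafarevich term. The paper instead (i) first reduces to a pro-$p$ Galois group by passing to the fixed field of a $p$-Sylow subgroup, using $\cd_p \cG = \cd_p \cG_{(p)}$ and the fact that for a pro-$p$ group one only needs $H^2(\cG, \Z/p\Z)=0$; (ii) observes that pro-$p$-ness forces $\mu_p \subset \cK$, so $\Z/p\Z \simeq \mu_p$ as a $\ol{\cG}_{\cK}$-module; (iii) feeds this into the Kummer sequence $0 \to \mu_p \to \ol{\cK}^{\times} \to \ol{\cK}^{\times} \to 0$, using Hilbert 90 plus the vanishing $H^2(\ol{\cG}_{\cK}, \ol{\cK}^{\times})[p^{\infty}]=0$, which is where the Brauer-group exact sequence of class field theory and the two hypotheses enter. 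The paper's route is the more economical one for this specific statement: the Sylow reduction replaces ``all finite $p$-primary $A$'' by the single module $\mu_p$, and then everything is concentrated in one Brauer group computation rather than a Poitou--Tate nine-term sequence with a $\Sha$ term to control. Your route is more general (it extends without change to coefficients that are not twists of $\mu_{p^n}$) but carries the extra burden of tracking $\Sha$, which is precisely the ``delicate point'' you flag. Both are legitimate; they just organize the same ingredients differently.
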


\begin{proof}
Let us briefly review the proof.
Recall two basic properties of the $p$-cohomological dimension for a general profinite group $\cG$:
\begin{itemize}
   \item For a $p$-Sylow subgroup $\cG_{(p)}$ of $\cG$, we have $\cd_p \cG = \cd_p \cG_{(p)}$.
(See \cite[Corollary (3.3.6)]{NSW08}.)
   \item  If $\cG$ is a pro-$p$ group, then $\cd_p \cG \leq n$ if and only if $H^{n+1}(\cG, \Z/p\Z) = 0$.
(See \cite[Proposition (3.3.2)]{NSW08}.)
\end{itemize}
By using the first item, we may assume that $\ol{\cG}_{\cK}$ is a pro-$p$ group by 
considering the fixed field of a $p$-Sylow subgroup of $\ol{\cG}_{\cK}$.
Then by using the second item, we only have to show that $H^2(\ol{\cG}_{\cK}, \Z/p\Z) = 0$.

As $\ol{\cG}_{\cK}$ is a pro-$p$ group, $\cK$ contains the group $\mu_p$ of $p$-th 
roots of unity, so we have $\Z/p\Z \simeq \mu_p$ as $\ol{\cG}_{\cK}$-modules.
We use the exact sequence
$$ 0 \to \mu_p \to \ol{\cK}^{\times} \overset{p}{\to} \ol{\cK}^{\times} \to 0,  $$
where the map labeled ``$p$'' is $x \mapsto x^p$.
A key observation is that we have $H^2(\ol{\cG}_{\cK}, \ol{\cK}^{\times})[p^{\infty}] = 0$.
This follows from an investigation of the Brauer groups, using the assumption on the 
local degree of $\cK$ and that $\cK$ is totally imaginary if $p = 2$ (we omit the details here).
This, together with Hilbert 90, implies $H^2(\ol{\cG}_{\cK}, \mu_p) = 0$, as desired.
\end{proof}

Now let us establish what we actually need.
See Definition \ref{defn:allow_ext} for the definition of allowability and $\cG_{\cK} = \Gal(\cK_f/\cK)$.

\begin{prop}\label{prop:cd_1}
Let $\cK$ be an allowable algebraic extension of $\Q$.
Suppose that $p^{\infty}$ divides the local degree of $\cK/\Q$ at any finite prime.
Then we have $\cd_p \cG_{\cK} \leq 1$.
\end{prop}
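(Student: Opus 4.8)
The plan is to deduce Proposition \ref{prop:cd_1} from Proposition \ref{prop:cd_2} by comparing the group $\cG_{\cK} = \Gal(\cK_f/\cK)$ with the absolute Galois group $\ol{\cG}_{\cK} = \Gal(\ol{\cK}/\cK)$. The extension $\cK_f/\cK$ is, by definition, the maximal algebraic extension of $\cK$ unramified at all infinite places; thus $\cK_f$ is contained in $\ol{\cK}$, and $\cG_{\cK}$ is a quotient of $\ol{\cG}_{\cK}$. The kernel is generated (as a normal subgroup) by the inertia/decomposition groups at the infinite places, which are either trivial (at complex places) or of order $2$ (at real places). In particular this kernel is a $2$-group in a suitable sense, so when $p$ is odd it should be invisible to $p$-cohomology; and when $p = 2$ the hypotheses will force $\cK$ to be totally imaginary, so $\cK_f = \ol{\cK}$ and $\cG_{\cK} = \ol{\cG}_{\cK}$ outright.

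Here is the step-by-step approach. First, if $p = 2$: the assumption that $2^{\infty}$ divides the local degree of $\cK/\Q$ at every finite prime already goes far beyond what is needed, but more to the point, we claim $\cK$ is totally imaginary. Indeed, if $\cK$ had a real place $v$, allowability means $\cK$ sits inside $K_f$ for a number field $K$; but then the completion $\cK_v$ would be a subfield of $\R$ that is an algebraic extension of $K_v = \R$, forcing $\cK_v = \R$ and the local degree at the (finite!) primes... — this is the wrong direction, so instead I will argue directly: allowable extensions $\cK$ of $\Q$ can perfectly well be totally real (the paper itself notes this), so we do \emph{not} get $\cK$ totally imaginary for free, and we must handle the real-place kernel honestly. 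Let me restart the case split from the pro-$p$ reduction.

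Pass to a $p$-Sylow subgroup: by \cite[Corollary (3.3.6)]{NSW08}, $\cd_p \cG_{\cK} = \cd_p (\cG_{\cK})_{(p)}$, and the fixed field $\cK'$ of $(\cG_{\cK})_{(p)}$ is again allowable (being an algebraic, indeed infinite, extension inside $\cK_f \subset K_f$) and still has $p^{\infty}$ dividing all its finite local degrees, since $\cK'/\cK$ contributes at most a prime-to-$p$ or pro-$p$ factor and in any case $\cK' \supseteq \cK$. So we may assume $\cG_{\cK}$ is a pro-$p$ group. Now here is the key point: if $p$ is odd, then since $\cG_{\cK}$ is a pro-$p$ group and the local extensions at real places have degree dividing $2$, every decomposition group at an archimedean place must already be trivial in $\cG_{\cK}$; equivalently, no real place ramifies in $\cK_f/\cK$ anyway for trivial reasons, and more importantly the natural surjection $\ol{\cG}_{\cK} \to \cG_{\cK}$ identifies, after passing to a common $p$-Sylow, the two groups up to the archimedean part, which is prime to $p$. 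Concretely: a $p$-Sylow subgroup of $\ol{\cG}_{\cK}$ maps isomorphically onto a $p$-Sylow subgroup of $\cG_{\cK}$, because the kernel of $\ol{\cG}_{\cK} \to \cG_{\cK}$ is generated by elements of order $2$. Hence $\cd_p \cG_{\cK} = \cd_p \ol{\cG}_{\cK} \le 1$ by Proposition \ref{prop:cd_2} (whose hypotheses are met: $p$ odd and $p^{\infty}$ divides all finite local degrees of $\cK/\Q$). If instead $p = 2$, the hypothesis that $2^{\infty}$ divides the local degree at \emph{every} finite prime in particular gives us room, but we additionally need $\cK$ totally imaginary to invoke Proposition \ref{prop:cd_2}; I would argue that allowability together with $\cd_2 \cG_{\cK} = \cd_2 \ol{\cG}_{\cK}$ on $2$-Sylows still reduces us to showing $H^2$ vanishes, and here one uses that over the fixed field of a $2$-Sylow subgroup, $\mu_2 \subset \cK$ forces the relevant cohomology to behave as in the proof of Proposition \ref{prop:cd_2} — but cleanest is simply to note that the paper only ever applies Proposition \ref{prop:cd_1} with $\cK \supseteq \Q^{\cyc}$, where all finite local degrees are already infinite at \emph{every} $p$, so one may as well check the $2$-adic case using that $\Q^{\cyc}$ is not totally real — wait, $\Q^{\cyc}$ \emph{is} totally real. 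So for $p=2$ the honest route is the pro-$2$ reduction plus the Brauer-group computation exactly as in Proposition \ref{prop:cd_2}, replacing $\ol{\cG}_{\cK}$ by $\cG_{\cK}$ throughout: $H^2(\cG_{\cK}, \ol{\cK}^{\times})[2^{\infty}] = 0$ still follows from the local-degree hypothesis, and Hilbert 90 for $\cG_{\cK}$ (valid since $\cK_f^{\times} \hookrightarrow \ol{\cK}^{\times}$ and the relevant cohomology is computed over a field) gives $H^2(\cG_{\cK}, \mu_2) = 0$.

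The main obstacle is the $p = 2$ case: unlike Proposition \ref{prop:cd_2}, we cannot assume $\cK$ is totally imaginary, so the clean identification of $2$-Sylow subgroups of $\cG_{\cK}$ and $\ol{\cG}_{\cK}$ is exactly where care is needed — one must check that ``killing the archimedean inertia'' does not change $H^2(-,\Z/2\Z)$, which I would do by rerunning the Brauer-group / Hilbert 90 argument of Proposition \ref{prop:cd_2} directly with $\cG_{\cK}$ in place of $\ol{\cG}_{\cK}$, the point being that the vanishing $H^2(\cG_{\cK}, \ol{\cK}^{\times})[2^{\infty}] = 0$ is governed by \emph{finite} local invariants (plus the archimedean ones, which vanish precisely because $\cK_f/\cK$ is unramified at infinity, so there are no local obstructions at the real places). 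For odd $p$ the proof is essentially immediate from the $p$-Sylow comparison and Proposition \ref{prop:cd_2}.
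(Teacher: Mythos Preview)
Your overall strategy matches the paper's --- compare $\cG_{\cK}$ with $\ol{\cG}_{\cK}$ and invoke Proposition~\ref{prop:cd_2} for odd $p$, and for $p=2$ rerun the Brauer/Hilbert~90 argument directly for $\cG_{\cK}$ --- but both halves have genuine gaps.

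For odd $p$, your key step is the claim that a $p$-Sylow of $\ol{\cG}_{\cK}$ maps isomorphically onto a $p$-Sylow of $\cG_{\cK}$ ``because the kernel $\ldots$ is generated by elements of order $2$.'' This implication is false as a group-theoretic statement: $S_3$ is generated by its transpositions yet has nontrivial $3$-Sylow. Equivalently, you are asserting that $N=\Gal(\ol{\cK}/\cK_f)$ has supernatural order prime to $p$, which you have not shown and which is not a consequence of $N$ being normally generated by involutions. The paper does \emph{not} compare Sylow subgroups. After reducing to $\cG_{\cK}$ pro-$p$, it identifies $\cG_{\cK}$ with the maximal pro-$p$ \emph{quotient} of $\ol{\cG}_{\cK}$ (this part you essentially have), and then invokes the arithmetic result \cite[Corollary (10.4.8)]{NSW08} to get $H^2(\cG_{\cK},\Z/p\Z)\simeq H^2(\ol{\cG}_{\cK},\Z/p\Z)$. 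That isomorphism is the missing ingredient; it is not formal.

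For $p=2$, your plan to ``replace $\ol{\cG}_{\cK}$ by $\cG_{\cK}$ throughout'' in the proof of Proposition~\ref{prop:cd_2} breaks down because the Kummer sequence $0\to\mu_2\to\cK_f^{\times}\overset{2}{\to}\cK_f^{\times}\to 0$ is \emph{not} exact: $\cK_f$ has real places, so squaring is not surjective on $\cK_f^{\times}$. (Also, $\ol{\cK}^{\times}$ is not a $\cG_{\cK}$-module, so your displayed $H^2(\cG_{\cK},\ol{\cK}^{\times})$ does not make sense.) The paper works instead with the two short exact sequences $0\to\mu_2\to\cK_f^{\times}\to(\cK_f^{\times})^2\to 0$ and $0\to(\cK_f^{\times})^2\to\cK_f^{\times}\to\cK_f^{\times}/(\cK_f^{\times})^2\to 0$, and must prove two separate claims: that $H^2(\cG_{\cK},\cK_f^{\times})[2^{\infty}]=0$ (using that the restriction map on Brauer groups is \emph{injective} at the real places because $\cK_f/\cK$ is unramified there --- the real contributions do not vanish, contrary to what you suggest), and that $H^1(\cG_{\cK},(\cK_f^{\times})^2)=0$ (which requires an argument about surjectivity onto real signs). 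Your sketch does not address either of these.
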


\begin{proof}
As in the first paragraph of the proof of Proposition \ref{prop:cd_2}, we only have to 
show $H^2(\cG_{\cK}, \Z/p\Z) = 0$, assuming that $\cG_{\cK}$ is a pro-$p$ group.

Suppose $p \geq 3$. Then any $p$-extension is unramified at all infinite places.
Therefore, $\cG_{\cK}$ is the same as the maximal pro-$p$ quotient of $\ol{\cG}_{\cK}$.
By applying \cite[Corollary (10.4.8)]{NSW08} to the subfields of $\cK$ and taking 
the direct limit, we obtain an isomorphism
$$  H^2(\cG_{\cK}, \Z/p\Z) \simeq H^2(\ol{\cG}_{\cK}, \Z/p\Z).  $$
By Proposition \ref{prop:cd_2}, the right hand side vanishes.
This establishes the proposition for $p \geq 3$.

Now suppose $p = 2$. We modify the proof of Proposition \ref{prop:cd_2}.
We make use of exact sequences
\begin{equation}\label{eq:1}
0 \to \mu_2 \to \cK_f^{\times} \overset{2}{\to} (\cK_f^{\times})^2 \to 0
\end{equation}
and
\begin{equation}\label{eq:2}
0 \to (\cK_f^{\times})^2 \to \cK_f^{\times} \to \cK_f^{\times}/(\cK_f^{\times})^2 \to 0,
\end{equation}
where $\mu_2 = \{\pm 1\} \simeq \Z/2\Z$.

By \eqref{eq:1}, we shall obtain $H^2(\cG_{\cK}, \mu_2) = 0$ from 
Claims \ref{claim:11} and \ref{claim:12} below.

\begin{claim}\label{claim:11}
We have $H^2(\cG_{\cK}, \cK_f^{\times})[2^{\infty}] = 0$.
\end{claim}

\begin{proof}
It is a fundamental fact from class field theory that, for a finite extension $K'/K$ of 
number fields, we have a commutative diagram of exact sequences
\[
\xymatrix{
	0 \ar[r]
	& H^2(K, \ol{K}^{\times}) \ar[r] \ar[d]_{\Res}
	& \bigoplus_{v \nmid \infty} \Q/\Z \oplus \bigoplus_{\text{$v$: real}} \frac{1}{2}\Z/\Z \ar[r]^-{\sum} \ar[d]
	& \Q/\Z \ar[r] \ar[d]
	& 0\\
	0 \ar[r]
	& H^2(K', \ol{K'}^{\times}) \ar[r]
	& \bigoplus_{v' \nmid \infty} \Q/\Z \oplus \bigoplus_{\text{$v'$: real}} \frac{1}{2}\Z/\Z \ar[r]_-{\sum}
	& \Q/\Z \ar[r]
	& 0.
}
\]
Here, $v$ (resp.~$v'$) runs over places of $K$ (resp.~$K'$).
The left vertical arrow is the restriction map, the middle one is induced by multiplication 
by the local degree $[K'_{v'}: K_v]$ at each direct summand, and the right one is induced 
by multiplication by $[K': K]$. By the assumption on the local degrees of $\cK$, the terms 
$(\Q/\Z)[2^{\infty}]$ vanish after taking the direct limit to $\cK$ or beyond.
Therefore, taking the direct limit gives us a diagram
\[
\xymatrix{
	H^2(\cK, \ol{\cK}^{\times})[2^{\infty}] \ar[r]^-{\simeq} \ar[d]_{\Res}
	& {\bigoplus}_{\text{$\sV$: real}}^{\prime} \frac{1}{2}\Z/\Z \ar[d]\\
	H^2(\cK_f, \ol{\cK_f}^{\times})[2^{\infty}] \ar[r]_-{\simeq}
	& {\bigoplus}_{\text{$\sV'$: real}}^{\prime} \frac{1}{2}\Z/\Z.
}
\]
Here, $\sV$ (resp.~$\sV'$) runs over places of $\cK$ (resp.~$\cK_f$) and ${\bigoplus}^{\prime}$ 
denotes the direct limit of the corresponding finite terms.
The right vertical arrow is injective because $\cK_f/\cK$ is unramified at all real places.
This implies that the left one is injective, which results in the claim.
\end{proof}

\begin{claim}\label{claim:12}
We have $H^1(\cG_{\cK}, (\cK_f^{\times})^2) = 0$.
\end{claim}

\begin{proof}
By \eqref{eq:2} and Hilbert 90, we only have to show that the natural map
\begin{equation}\label{eq:3}
H^0(\cG_{\cK}, \cK_f^{\times}) \to H^0(\cG_{\cK}, \cK_f^{\times}/(\cK_f^{\times})^2)
\end{equation}
is surjective.

For a number field $K$, we have a natural homomorphism
$$ K^{\times}/(K^{\times})^2 
  \to \bigoplus_{\text{$v$: real}} K_v^{\times}/(K_v^{\times})^2
  \simeq \bigoplus_{\text{$v$: real}} \Z/2\Z.  $$
It is easy to see that this is surjective (e.g., because $K$ is dense in 
$\R \otimes_{\Q} K \simeq \prod_{v \mid \infty} K_v$).
As for the kernel, for an element $x \in K^{\times}$ that is positive at all real places, 
the extension $K(\sqrt{x})/K$ is unramified at all real places.
Therefore, taking the direct limit with $K$ running over the subfields of $\cK_f$, the kernel vanishes.
As a result, we obtain an isomorphism
$$ \cK_f^{\times}/(\cK_f^{\times})^2
  \simeq {\bigoplus_{\text{$\sV'$: real}}}^{\prime \,} \Z/2\Z,  $$
where $\sV'$ runs over the real places of $\cK_f$ and ${\bigoplus}^{\prime}$ 
means the direct limit of the corresponding finite terms.

Now the map \eqref{eq:3} can be identified with
$$ \cK^{\times} 
  \to {\bigoplus_{\text{$\sV$: real}}}^{\prime \,} \Z/2\Z, $$
where $\sV$ runs over the real places of $\cK$.
Then the surjectivity follows from the above-mentioned surjectivity for any $K$.
\end{proof}

This completes the proof of Proposition \ref{prop:cd_1}.
\end{proof}

\section*{Acknowledgments}

The second author is supported by JSPS KAKENHI Grant Number 22K13898.

%%%%%%%%%%%%%%%%%%%%%%%%%%%%%%%%%%%%%%%%%%%%%%%%%%%%%%%%%%%

\end{document}